\newtheorem{theorem}{Theorem}
\newtheorem{lemma}[theorem]{Lemma}
\newtheorem{conj}[theorem]{Conjecture}
\newtheorem{definition}[theorem]{Definition}
\newtheorem{prop}[theorem]{Proposition}
\theoremstyle{remark}
\newtheorem{remark}{Remark}
\numberwithin{equation}{section}
\def\l{{\mathfrak l}}
\newcommand{\G}{{\Gamma}}
 \let\a\alpha  \let\b\beta    
  \let\l\lambda   
\let\GL\Lambda
\def\C{\mathbb C}
\def\G{\Gamma}
\def\GL{\mathbf{GL}}
\def \GL2 {{\text{GL}_2}}
\def\Tr{{\rm Tr}}
\def\Gal{{\rm Gal}}
\def\Frob{{\rm Fr}}
\def\F{{\mathbb F}}
\def\Z{{\mathbb Z}}
\def\Q{{\mathbb Q}}
\def\G{\Gamma}
\newcommand*\HYPERskip{&}
\newcommand*\pFq{
\begingroup
\catcode`\,\active
\def ,{\HYPERskip}%
\doHyper
}
\def\doHyper#1#2#3#4#5{%
\, _{#1}F_{#2}\left[\begin{matrix}#3 \smallskip \\  #4\end{matrix} \; ; \; #5\right]%
\endgroup
}
\newcommand{\hgp}[4]{
_{2}F_{1} \left(
\begin{matrix}
#1 & #2 \\
   & #3 \\
\end{matrix}
\, ; #4
\right)_p
}
\newcommand{\hgthree}[6]{
\,_{3}F_{2} \left(
\begin{matrix}
#1, & #2, & #3 \\
   & #4, & #5 \\
\end{matrix}
\, ; #6
\right)
}
\newcommand{\fp}
{\mathbb{F}_p}
\newcommand{\fq}
{\mathbb{F}_q}
\newcommand{\fqc}
{\mathbb{F}_q^{\times}}
\newcommand{\fqhat}
{\widehat{\mathbb{F}_{q}^{\times}}}
\def\({\left(}
\def\){\right)}
\def\et #1{\eta_{#1}}
\def\bet #1{\overline\eta_{#1}}
\def\CC#1#2{\binom {#1}{#2}}
\begin{document}

\author{Alyson Deines, Jenny G. Fuselier, Ling Long, Holly Swisher, Fang-Ting Tu}

\address{Center for Communications Research, San Diego, CA 92121, USA}
\email{aly.deines@gmail.com }
\address{High Point University, High Point, NC 27268, USA}
\email{jfuselie@highpoint.edu}
\address{Louisiana State University, Baton Rouge, LA 70803, USA}
\email{llong@math.lsu.edu}
\address{Oregon State University, Corvallis, OR 97331, USA}
\email{swisherh@math.oregonstate.edu}
\address{National Center for Theoretical Sciences, Hsinchu, Taiwan 300, R.O.C.}
\email{ft12@math.cts.nthu.edu.tw}

\title[Truncated hypergeometric and Gaussian hypergeometric functions]{Hypergeometric series, truncated hypergeometric series, and Gaussian hypergeometric functions}

\keywords{Hypergeometric series, Gaussian hypergeometric functions, truncated hypergeometric series,  Gross-Koblitz formula, supercongruences, Galois representation}

\subjclass[2010]{33C20, 11T24, 11G25, 14J30}

\begin{abstract}In this paper, we investigate the relationships among hypergeometric series, truncated hypergeometric series, and Gaussian hypergeometric functions through some families of `hypergeometric' algebraic varieties that are higher dimensional analogues of Legendre curves.
\end{abstract}

\maketitle

\section{Introduction}
\subsection{Motivation}

When a prime $p$ satisfies $p\equiv 1\pmod 6$, the $p$-adic gamma value $-\G_p\left(\frac 13\right)^3$ is a quadratic algebraic number with absolute value $\sqrt{p}$ which can be written as a Jacobi sum. Thus, $\G_p\left(\frac 13\right)^6$  is not a conjugate of $-\G_p\left(\frac 13\right)^3$ in the sense of algebraic numbers \cite{Cohen2}. However, considering truncated hypergeometric series we have when $p\equiv 1\pmod 6$,
\begin{equation}\label{eq:1}
\pFq{3}{2}{\frac13 , \frac13 , \frac13}{, 1,1}{1}_{p-1}:=\sum_{k=0}^{p-1}\binom{-\frac13}{k}^3 \cdot (-1)^k \equiv \G_p \left(\frac 13 \right)^6  \pmod {p^3},
\end{equation}
which was shown by the third author and Ramakrishna in \cite{LR}, while numerically we see that
\begin{equation}\label{eq:2}
\pFq{3}{2}{\frac23,\frac23,\frac23}{,1,1}{1}_{p-1}:=\sum_{k=0}^{p-1}\binom{-\frac23}{k}^3 \cdot (-1)^k \equiv -\G_p\left(\frac 13\right)^3  \pmod {p^3},
\end{equation}
and we will show this holds modulo $p^2$ in this paper.
By Dwork \cite{Dwork-cycles},
$$\lim_{s\rightarrow \infty}\pFq{3}{2}{\frac13,\frac13,\frac13}{,1,1}{1}_{p^s-1} \Big/ \pFq{3}{2}{\frac13,\frac13,\frac13}{,1,1}{1}_{p^{s-1}-1}=\G_p\left(\frac 13\right)^6,$$
while
$$\lim_{s\rightarrow \infty}\pFq{3}{2}{\frac23,\frac23,\frac23}{,1,1}{1}_{p^s-1} \Big/ \pFq{3}{2}{\frac23,\frac23,\frac23}{,1,1}{1}_{p^{s-1}-1}=-\G_p\left(\frac 13\right)^3.$$
When $p\equiv 5\pmod 6$, Dwork in \cite{Dwork-cycles} showed that there is a similar congruence that involves both $\pFq{3}{2}{\frac13,\frac13,\frac13}{,1,1}{1}_{p^s-1}$ and $\pFq{3}{2}{\frac23,\frac23,\frac23}{,1,1}{1}_{p^{s-1}-1}$.  It is tempting to think of the parameters $\frac 13$ and $\frac 23$ as `conjugates of some sort'.  Also, if one considers the finite field analogue of  $\pFq{3}{2}{\frac13,\frac13,\frac13}{,1,1}{1}$ due to Greene, what corresponds to $\frac 13$ is a cubic character,  which is determined up to a conjugate. Putting these together, it appears that  $-\G\left(\frac 13\right)^3$ is some sort of `conjugate' of $\G\left(\frac 13\right)^6$.  One motivation of this paper is to investigate these seemingly contradicting phenomena via the relations between hypergeometric series, Gaussian hypergeometric functions and truncated hypergeometric series. These objects correspond to periods, Galois representations, and unit roots (in the ordinary case) respectively.

In recent work \cite{DFLST}, the authors use the perspective of  Wolfart \cite{Wolfart} and Archinard \cite{Archinard} to consider classical $_2F_1$-hypergeometric functions with rational parameters as periods of explicit generalized Legendre curves
$$
C_{\lambda}^{[N;i,j,k]}: y^N=x^i(1-x)^j(1-\l x)^k.
$$
In \cite{DFLST}, the main players are hypergeometric series and Gaussian hypergeometric functions.  The authors use Gaussian $_2F_1$-hypergeometric  functions to count points of  $C_{\lambda}^{[N;i,j,k]}$ over finite fields and hence compute the corresponding Galois representations. This arithmetic information together with the periods of  $C_{\lambda}^{[N;i,j,k]}$ in terms of hypergeometric values yields information about the decomposition of the Jacobian variety $J_{\lambda}^{[N;i,j,k]}$ constructed from the desingularization of $C_{\lambda}^{[N;i,j,k]}$. When $\gcd(i,j,k)$ is coprime to $N$ and $N\nmid i+j+k$,  then $J_{\lambda}^{[N;i,j,k]}$ has a degree $2\varphi(N)$ `primitive' factor $J_\l^{new}$, where $\varphi$ is the Euler phi function.  The authors prove the following theorem.

\begin{theorem}[\cite{DFLST}]\label{thm:1}
Let $N=3,4,6$ and $1\le i,j,k<N$ with $\gcd(i,j,k)$ coprime to $N$ and $N\nmid i+j+k$.   Then for each $\l \in \overline{\Q}$, the endomorphism algebra of $J_\l^{new}$  contains a 4-dimensional algebra over $\Q$  if and only if
\[
B \left (\frac{N-i}N,\frac{N-j}N \right )\Big / B \left (\frac kN, \frac{2N-i-j-k}N \right )\in \overline \Q,
\] where $B(a,b)=\frac{\G(a)\G(b)}{\G(a+b)}$, and $\G(\cdot)$ is the Gamma function.
\end{theorem}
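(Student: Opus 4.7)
The plan is to exploit the order-$N$ automorphism $\rho\colon (x,y)\mapsto (x,\zeta_N y)$ of $C_\lambda^{[N;i,j,k]}$, combine the explicit period computations of Wolfart--Archinard (recalled in \cite{DFLST}) with a transcendence-theoretic criterion for extra endomorphisms in the spirit of Shiga--Wolfart/W\"ustholz, and translate the beta-ratio condition into the existence of an extra isogeny on $J_\lambda^{new}$.

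First, $\rho$ embeds $\Z[\zeta_N]$ into $\text{End}(J_\lambda^{new})$. Since $N\in\{3,4,6\}$ and $\varphi(N)=2$, $J_\lambda^{new}$ is $4$-dimensional and may be viewed as a $2$-dimensional abelian variety with multiplication by $\Q(\zeta_N)$. The Hodge decomposition of $H^1_{\text{dR}}(J_\lambda^{new})$ refines under $\rho$ into eigenspaces $V_\chi\oplus V_{\bar\chi}$ indexed by the two primitive characters of $\mu_N$. Following the Wolfart--Archinard framework, one writes a basis of holomorphic differentials as pullbacks of forms $x^{a-1}(1-x)^{b-1}(1-\lambda x)^{c-1}\,dx/y^m$; their periods over cycles lifted from $[0,1]$ and $[1,1/\lambda]$ are Euler-type integrals that evaluate to
\[
B\!\left(\tfrac{N-i}{N},\tfrac{N-j}{N}\right)\cdot {}_2F_1(\lambda) \quad \text{and} \quad B\!\left(\tfrac{k}{N},\tfrac{2N-i-j-k}{N}\right)\cdot {}_2F_1'(\lambda),
\]
where a suitable Pfaff/Euler transformation identifies ${}_2F_1$ with ${}_2F_1'$ up to a factor in $\overline\Q(\lambda)$. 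Thus the ratio of these two periods equals the beta ratio $B_1/B_2$ times an algebraic function of $\lambda$.

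The bridge between periods and endomorphisms will be W\"ustholz's analytic subgroup theorem: for $J_\lambda^{new}$ defined over $\overline\Q$, a nontrivial $\overline\Q$-linear relation among its periods occurs if and only if $J_\lambda^{new}$ has extra $\overline\Q$-rational endomorphisms beyond those forced by its $\Z[\zeta_N]$-structure. A $4$-dimensional $\Q$-subalgebra of $\text{End}^0(J_\lambda^{new})$ properly containing $\Q(\zeta_N)$ is precisely such an extra structure, witnessed for instance by a splitting of the $2$-dim $\Q(\zeta_N)$-AV into two $\Q(\zeta_N)$-elliptic factors (or by additional multiplication coming from a quadratic field). Combined with the period computation, the existence of such an endomorphism reduces exactly to the algebraicity of $B_1/B_2$, independent of $\lambda$, as stated.

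The hardest step will be the transcendence input: applying W\"ustholz's theorem in a form that pinpoints which $\overline\Q$-linear relation corresponds to the desired $4$-dimensional subalgebra requires careful bookkeeping of the $\overline\Q$-rational structure on $H^1_{\text{dR}}(J_\lambda^{new})$ and of the chosen cycles. A secondary task will be selecting the correct Pfaff/Euler transformation to place the hypergeometric $\lambda$-factor in $\overline\Q(\lambda)$, so that the condition collapses cleanly to the pure beta ratio and the iff becomes $\lambda$-uniform.
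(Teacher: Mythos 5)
This paper does not actually prove Theorem~\ref{thm:1}: it is imported verbatim from \cite{DFLST}, so there is no in-paper argument to compare your proposal against. That said, your plan follows the same route as the cited source: use the $\mu_N$-action to endow $J_\lambda^{new}$ with $\Q(\zeta_N)$-multiplication, compute the periods of the eigendifferentials as Euler integrals of the form $B(\cdot,\cdot)\cdot{}_2F_1$ in the Wolfart--Archinard normalization, use an Euler/Pfaff transformation to make the two hypergeometric factors agree up to an algebraic function of $\lambda$, and then invoke a Shiga--Wolfart/W\"ustholz transcendence criterion to convert algebraicity of the resulting period ratio into the existence of an extra isogeny, i.e.\ a $4$-dimensional subalgebra of the endomorphism algebra. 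So the skeleton is the right one.

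Two cautions before you could call this a proof. First, a slip: $J_\lambda^{new}$ is not $4$-dimensional; the ``degree $2\varphi(N)$'' refers to the rank of $H_1$, and for $N=3,4,6$ the new factor is an abelian surface with $\Q(\zeta_N)$-action (you cannot have it be both $4$-dimensional and a $2$-dimensional $\Q(\zeta_N)$-abelian variety, as your second paragraph asserts). Second, and more substantively, the step you defer is exactly the crux: W\"ustholz's analytic subgroup theorem does not say in the form you quote that ``a nontrivial $\overline\Q$-linear relation among periods holds iff there are extra endomorphisms.'' The usable statement is that a quotient of periods of differentials of the first kind on abelian varieties over $\overline\Q$ is algebraic if and only if the corresponding simple factors are isogenous; to apply it here one must verify that the two periods entering the beta quotient are periods of eigendifferentials attached to the two $\Q(\zeta_N)$-isotypic pieces whose isogeny is equivalent to enlarging $\mathrm{End}^0(J_\lambda^{new})$ from the quadratic field $\Q(\zeta_N)$ to a $4$-dimensional algebra, and that no other period relations interfere. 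Until that bookkeeping (and the precise choice of cycles and of the Euler transformation) is carried out, the ``if and only if'' is not established; as a plan, however, it is consistent with how \cite{DFLST} proceeds.
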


The second motivation for this paper is to explore the following higher dimensional analogues of Legendre curves
$$C_{n,\l}: \quad y^n=(x_1x_2\cdots x_{n-1})^{n-1}(1-x_1)\cdots(1-x_{n-1})(x_1-\l x_2x_3\cdots x_{n-1}).$$ In particular, the curves $C_{2,\l}$ are known as Legendre curves. Up to a scalar multiple, the hypergeometric series
\[
\pFq{n}{n-1}{\frac jn, \frac jn,\cdots, \frac jn }{,1,\cdots,1}{\l}
\]
for any $1\le j\le n-1$, when convergent, can be realized as a period of $C_{n,\l}$.

\subsection{Results}Our first theorem shows that the number of rational points on $C_{n,\l}$ over finite fields $\fq$ can be expressed in terms of Gaussian hypergeometric functions.  For a definition of Gaussian hypergeometric functions please see Section \ref{GaussSec}.  \footnote{The subscript $q$ for a Gaussian hypergeometric function  records the size of the corresponding finite field and should not be confused with the subscript for truncated hypergeoemetric series which records the location of truncation.} Let $\widehat{\mathbb{F}_{q}^{\times}}$ denote the group of all multiplicative characters on $\fqc$.

\begin{theorem}\label{thm:2}
Let  $q=p^e\equiv 1\pmod{n}$ be a prime power. Let $\eta_n$ be a primitive order $n$ character   and $\varepsilon$ the trivial multiplicative character in $\fqhat$. Then

  $$
      \#C_{n,\l}(\fq)=1+q^{n-1}+ q^{n-1} \, \sum_{i=1}^{n-1}  \, _nF_{n-1} \(\begin{array}{cccc} {\eta_n^i,}&{\eta_n^i,}&\cdots,&{\eta_n^i,}\\& {\varepsilon,}& \cdots,&{\varepsilon,}\end{array}{;\l} \)_q.
     $$
\end{theorem}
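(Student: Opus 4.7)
The plan is to fiber $C_{n,\l}$ over the affine coordinates $\mathbf{x} = (x_1, \ldots, x_{n-1})$ and count, for each $\mathbf{x} \in \fq^{n-1}$, the $y \in \fq$ satisfying $y^n = f(\mathbf{x})$, where
\[
f(\mathbf{x}) := (x_1 \cdots x_{n-1})^{n-1}(1-x_1) \cdots (1-x_{n-1})(x_1 - \l x_2 \cdots x_{n-1}).
\]
Because $q \equiv 1 \pmod n$, $\fqhat$ contains the $n$ characters $\varepsilon, \eta_n, \ldots, \eta_n^{n-1}$ of order dividing $n$, and $\#\{y \in \fq : y^n = a\} = \sum_{i=0}^{n-1} \eta_n^i(a)$ for $a \neq 0$, while $\#\{y : y^n = 0\} = 1$. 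Summing over $\mathbf{x}$, the $i=0$ term contributes $\#\{\mathbf{x} : f(\mathbf{x}) \neq 0\}$, which combined with the $y=0$ contributions from the vanishing locus yields $q^{n-1}$; the extra $+1$ in the statement comes from the unique point at infinity in the natural projective model of $C_{n,\l}$.

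Writing $\a := \eta_n^i$ for $1 \le i \le n-1$ and using the identity $\a(x^{n-1}) = \bar\a(x)$, the remaining contribution is $\sum_{i=1}^{n-1} S_i$ where
\[
S_i \;=\; \sum_{\mathbf{x} \in \fq^{n-1}} \bar\a(x_1 \cdots x_{n-1}) \cdot \a\bigl((1-x_1) \cdots (1-x_{n-1})\bigr) \cdot \a(x_1 - \l x_2 \cdots x_{n-1}).
\]
The theorem then reduces to proving
\[
S_i \;=\; q^{n-1} \cdot {}_nF_{n-1}\!\left(\begin{matrix}\eta_n^i, \ldots, \eta_n^i \\ \varepsilon, \ldots, \varepsilon\end{matrix}; \l\right)_q
\]
for each $i = 1, \ldots, n-1$.

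To establish this, I would substitute $x_1 \mapsto w \cdot x_2 \cdots x_{n-1}$ (a bijection in $w$ on $\fqc$ with the other $x_j$ held nonzero), which reduces the coupled factor $x_1 - \l x_2 \cdots x_{n-1}$ to $x_2 \cdots x_{n-1}(w - \l)$, and then rescale $w \mapsto \l w$ so that the $\l$-dependence concentrates in a single factor $\a(1 - \l w x_2 \cdots x_{n-1})$. After the standard cancellations $\a(z)\bar\a(z) = \varepsilon(z)$ on the open locus, $S_i$ takes the shape
\[
\a(-1) \sum_{\mathbf{y} \in (\fqc)^{n-1}} \prod_{j=1}^{n-1} \bar\a(y_j) \a(1 - y_j) \cdot \a(1 - \l y_1 \cdots y_{n-1}),
\]
which is the finite-field shadow of the classical iterated Euler integral
\[
{}_nF_{n-1}\!\left(\begin{matrix}\a, \ldots, \a \\ 1, \ldots, 1\end{matrix}; \l\right) \;\propto\; \int_{[0,1]^{n-1}} \prod_j t_j^{\a - 1}(1-t_j)^{-\a} (1-\l t_1 \cdots t_{n-1})^{-\a}\, d\mathbf{t}.
\]
Matching this character sum to the Gaussian hypergeometric function defined in Section \ref{GaussSec}, via standard Jacobi-sum manipulations that collapse the single-variable $y_j$-sums into factors of the form $J(\bar\a, \a)$, identifies $S_i/q^{n-1}$ with the claimed value.

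The main obstacle is the bookkeeping in this final matching: tracking the precise Jacobi-sum normalizations (which depend on the convention for $_nF_{n-1}$ fixed in Section \ref{GaussSec}), absorbing the prefactor $\a(-1)$ into the hypergeometric definition, and verifying that the boundary loci ($x_j = 0$, $1 - x_j = 0$, $x_1 = \l x_2 \cdots x_{n-1}$) contribute nothing extra. Once the matching is carried out for one $i$, summing the resulting identities over $i = 1, \ldots, n-1$ yields the theorem.
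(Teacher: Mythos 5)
Your proposal follows essentially the same route as the paper: expand $\#\{y\in\fq: y^n=f(\mathbf{x})\}$ as $\sum_{i=0}^{n-1}\eta_n^i(f(\mathbf{x}))$, fold the $i=0$ term together with the vanishing locus into $q^{n-1}$, and identify each remaining character sum $S_i$ with $q^{n-1}$ times a Gaussian ${}_nF_{n-1}$. Two comments. First, the step you defer as ``bookkeeping''---matching the decoupled sum $\sum_{\mathbf{y}}\prod_j\overline{\eta_n^i}(y_j)\,\eta_n^i(1-y_j)\cdot\eta_n^i(1-\l y_1\cdots y_{n-1})$ to Greene's Definition \ref{def: HGF-finite}, which is a character-indexed sum of binomial coefficients rather than a point count---is the entire technical content of the proof, not an afterthought. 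It is supplied by iterating Greene's reduction formula (Theorem \ref{Greene-3.13}) down to a ${}_3F_2$ and invoking his explicit double-sum formula there (Corollary 3.14(ii) of \cite{Greene}); this is exactly the paper's Lemma \ref{Gaussian_lemma}, and your substitution $x_1\mapsto w x_2\cdots x_{n-1}$ followed by $w\mapsto \l w$ is that iteration run in reverse. You should cite or reprove that identity rather than appeal to ``standard Jacobi-sum manipulations,'' since the passage from Greene's definition to a point-count sum is not a one-line orthogonality argument. Second, your intermediate claim $S_i=q^{n-1}\,{}_nF_{n-1}(\eta_n^i,\ldots;\l)_q$ carries the wrong upper parameter: the factors $\overline{\eta_n^i}(y_j)\,\eta_n^i(1-y_j)$ in your decoupled form pair with the kernel $A(y)\overline{A}(1-y)$ appearing in Theorem \ref{Greene-3.13} only for $A=\overline{\eta_n^i}=\eta_n^{n-i}$, which is precisely why Lemma \ref{Gaussian_lemma} produces the exponent $n-k$. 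The discrepancy cancels when you sum over $i=1,\dots,n-1$, because that index set is stable under $i\mapsto n-i$, so the theorem is unaffected---but the per-$i$ identity as you state it is false for $n>2$, and an argument that set out to establish it for a fixed $i$ would not close.
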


Meanwhile, we are also interested in knowing how to use information from truncated hypergeometric series to obtain information about the Galois representations and hence local zeta functions of $C_{n,\l}$.  For instance, we have the following conjecture based on numerical evidence for the case $\l=1$.

\begin{conj}\label{thm:4}
Let $n\geq 3$ be a positive integer, and $p$ be prime such that $p\equiv 1 \pmod{n}$. Then
\[
\pFq{n}{n-1}{\frac{n-1}{n} & \frac{n-1}{n}  & \ldots & \frac{n-1}{n}}{ & 1 & \ldots  & 1}{1}_{p-1}:=\sum_{k=0}^{p-1} \binom{\frac{1-n}n}{k}^n(-1)^{kn}\equiv -\G_p\left(\frac{1}{n}\right)^n \pmod{p^3}.
\]
\end{conj}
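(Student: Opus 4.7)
The plan is to follow the $p$-adic approach used by Long and Ramakrishna in \cite{LR} to prove the companion supercongruence \eqref{eq:1}. Set $a = (n-1)/n$, $m = (p-1)/n$, and $K = (n-1)m$, so that $a + K = (n-1)p/n \in p\Z_p$. A valuation count shows that $(a)_k = \prod_{j=0}^{k-1}(a+j)$ is a $p$-adic unit for $0 \leq k \leq K$ and carries exactly one factor of $p$ for each $k \in \{K+1, \ldots, p-1\}$. Since $n \geq 3$ and $k!$ is a unit for $k < p$, the terms with $k > K$ satisfy $((a)_k/k!)^n \in p^n\Z_p \subseteq p^3\Z_p$, so that
\begin{equation*}
\sum_{k=0}^{p-1}\binom{\tfrac{1-n}{n}}{k}^{\!n}(-1)^{kn}
\ \equiv\ \sum_{k=0}^{K}\left(\frac{(a)_k}{k!}\right)^{\!n}\pmod{p^3}.
\end{equation*}

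For $0 \leq k \leq K$, the functional equation $\Gamma_p(x+1) = -x\,\Gamma_p(x)$, valid for $x \in \Z_p^\times$, yields
\begin{equation*}
\frac{(a)_k}{k!} \ =\ -\,\frac{\Gamma_p(a+k)}{\Gamma_p(a)\,\Gamma_p(k+1)},
\end{equation*}
converting the truncated sum into a sum of ratios of $p$-adic Gamma values. The substitution $k \mapsto K - k$ interchanges the boundary terms $k = 0$ and $k = K$; at $k = K$ we encounter $\Gamma_p(a+K) = \Gamma_p((n-1)p/n)$. The reflection formula $\Gamma_p(x)\Gamma_p(1-x) = (-1)^{r_p(x)}$, where $r_p(x) \in \{1,\ldots,p\}$ is the least positive residue of $x$ mod $p$, then converts $\Gamma_p((n-1)/n)$ into $\pm\Gamma_p(1/n)^{-1}$ and introduces the expected factor $\Gamma_p(1/n)^n$. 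A mod-$p$ precursor of the desired identity should follow from applying the Gross-Koblitz formula to the character-sum expression of Theorem~\ref{thm:2} specialized to $\l = 1$, since the truncated sum captures the unit-root part of the corresponding Frobenius characteristic polynomial.

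The main obstacle is lifting this identity from mod $p$ to mod $p^3$. For this, one would expand each $\Gamma_p$-value around the nearest integer argument using the Taylor series of $\log\Gamma_p$, whose coefficients $G_j$ are $p$-adic analogues of the polygamma functions. The first-order corrections should cancel in pairs under the $k \leftrightarrow K - k$ symmetry, while the second-order corrections should reduce to $p$-adic harmonic sum identities of Wolstenholme type (such as $\sum_{j=1}^{p-1} 1/j \equiv 0 \pmod{p^2}$), suitably generalized over residue classes modulo $n$. Carrying out this precision analysis uniformly in $n$, and in particular ruling out any unaccounted $p^2$ contribution from the $n$-fold product structure, is the principal technical difficulty and the reason the statement is posed only as a conjecture; it is the $_nF_{n-1}$ analogue of the most delicate step in the proof of \eqref{eq:1} in \cite{LR}.
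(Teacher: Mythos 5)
You should be aware at the outset that the statement you are addressing appears in the paper as Conjecture~\ref{thm:4}: the authors do \emph{not} prove the mod~$p^3$ congruence for general $n$, and establish only the weaker mod~$p^2$ version (Theorem~\ref{thm:4a}). So there is no complete argument in the paper to measure yours against --- and your own write-up concedes the same point, ending by identifying the lift from mod~$p$ to mod~$p^3$ as ``the principal technical difficulty and the reason the statement is posed only as a conjecture.'' As it stands this is a strategy outline, not a proof. The one step you actually carry out --- truncating the sum at $K=(n-1)(p-1)/n$ modulo $p^3$ via the $p$-adic valuation of $(a)_k$ --- is correct and is exactly Lemma~\ref{AltTrunc}. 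Everything after that is conditional. In particular: (i) your proposed route to even the mod~$p$ congruence passes through an evaluation of the Gaussian ${}_nF_{n-1}$ at $\l=1$ in terms of Jacobi sums (via Theorem~\ref{thm:2}, Lemma~\ref{GaussToHyper} and the Gross--Koblitz formula), and such an evaluation is established in the paper only for $n=3,4$ (Theorem~\ref{GaussianTheorem}); (ii) the assertions that the first-order $\G_p$-corrections ``cancel in pairs under $k\leftrightarrow K-k$'' and that the second-order corrections ``reduce to Wolstenholme-type identities'' are precisely the content that a proof would have to supply, and no argument is given for either.

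It is also worth noting that the method the paper actually uses for its partial (mod~$p^2$) result is genuinely different from what you sketch. Rather than analyzing the truncated sum directly through a reflection symmetry, the authors deform it into a one-parameter family of terminating ${}_{n+1}F_n$ series, evaluate every member of the family in closed form by the Karlsson--Minton formula \eqref{KM1}, expand both sides $p$-adically (harmonic sums on the hypergeometric side, the Taylor expansion of $\G_p$ on the Gamma-quotient side), and then specialize the parameter to $y=-1$ so that the unknown first-order coefficient drops out of the comparison. The closed-form evaluation is what makes the error analysis tractable there; your direct approach has no analogue of it, which is exactly why the higher-order terms in your outline remain uncontrolled. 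If you want an unconditional result, the realistic target is to reproduce Theorem~\ref{thm:4a} by this deformation method (or by Dixon's formula when $n=2,3$), or to pursue the ${}_nG_n$-function identities of McCarthy et al.\ that the paper cites as a possible route to the full conjecture.
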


\noindent Using the Gross-Koblitz formula \cite{GK}, recalled in Section \ref{GKSec}, we have for a prime $p\equiv 1\pmod n$,
$$J(\eta_n,\eta_n)J(\eta_n,\eta_n^2)\cdots J(\eta_n,\eta_n^{n-2})=(-1)^{n-2+ \frac{1+(n-1)p}n}\G_p\left(\frac 1n\right)^n,$$
where $J(\cdot,\cdot)$ denotes the standard Jacobi sum.   We see that $(-1)^{n-2+ \frac{1+(n-1)p}n}=1$ when $n$ is odd and $\eta_n$ is an order $n$ character of $\F_p^\times$ such that $\eta_n(x)\equiv x^{\frac{p-1}n} \pmod p$ for all $x\in \F_p$. From the perspective of Gr\"ossencharacters (Hecke characters) (see Weil \cite{Weil}), this product of Jacobi sums is associated with a linear representation $\chi$ of the Galois group $\Gal(\overline \Q/\Q(e^{2\pi i/n}))$.  We would like to explore whether the Galois representation arising from $C_{n,1}$ contains a factor that is related to $\chi$. When $n=3,4$, the answer is positive. In proving these results the work of Greene \cite{Greene} and  McCarthy  \cite{McCarthy} on finite field analogues of classical hypergeometric evaluation formulas plays an essential role.

Ahlgren and Ono \cite{Ahlgren-Ono} show that for any odd prime $p$,
\begin{equation}\label{AOresult}
p^3\cdot\, _4F_3\(\begin{matrix}
{\eta_4^2}, & {\eta_4^2},& {\eta_4^2},&{\eta_4^2}  \\
   & \varepsilon,  & \varepsilon, & \varepsilon\\
\end{matrix};1\)_p=-a(p)-p,
\end{equation}
where $a(p)$ is the $p$th coefficient of the weight 4 Hecke cuspidal eigenform $$\eta(2z)^4\eta(4z)^4,$$ with $\eta(z)$ being the Dedekind eta function.
Here, we show the following.
\begin{theorem}\label{GaussianTheorem}
Let $\eta_2$, $\eta_3$ or $\eta_4$  denote characters of order 2, 3, or 4, respectively, in $\fqhat$.

\begin{enumerate}
\item \label{Gpart1}
Let $q\equiv 1 \pmod 3$ be a prime power.  Then
$$
q^2\cdot\,  \hgthree{\eta_3}{\eta_3}{\eta_3}{\varepsilon}{\varepsilon}{1}_q = J(\eta_3,\eta_3)^2-J(\eta_3^2,\eta_3^2).
$$
\item \label{Gpart2}
Let $q\equiv 1 \pmod 4$ be a prime power. Then
$$
q^3\cdot\, _4F_3\(\begin{matrix}
{\eta_4}, & {\eta_4},& {\eta_4},&{\eta_4}  \\
   & \varepsilon,  & \varepsilon, & \varepsilon\\
\end{matrix};1\)_q
=J({\eta_4},\eta_2)^3+qJ({\eta_4},\eta_2)-J(\overline{\eta_4},\eta_2)^2.
$$
\end{enumerate}
\end{theorem}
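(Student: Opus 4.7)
The plan is to evaluate each Gaussian hypergeometric function by expanding it as a character sum using Greene's definition, invoking a finite-field analogue of a classical hypergeometric evaluation theorem, and finally collecting the answer into Jacobi sums via standard Gauss sum identities together with the Hasse--Davenport relation.

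For Part \eqref{Gpart1}, the parameter configuration $(A,B,C;D,E) = (\eta_3,\eta_3,\eta_3;\varepsilon,\varepsilon)$ satisfies the Dixon balance conditions $D = A\overline{B}$ and $E = A\overline{C}$, so I would apply Greene's finite-field analogue of Dixon's ${}_3F_2(1)$ summation theorem. Greene's version of Dixon is typically expressed as a sum of two products of binomial (equivalently, Jacobi) symbols. Upon specializing to $A=B=C=\eta_3$ and simplifying the degenerate factors of the form $J(\varepsilon,\cdot)=-1$, the two summands should collapse to $J(\eta_3,\eta_3)^2$ and $-J(\eta_3^2,\eta_3^2)$, with the signs reconciled via the cubic Gauss-sum identity $g(\eta_3)g(\overline{\eta_3}) = \eta_3(-1)q$.

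For Part \eqref{Gpart2}, I would mirror this strategy but with a ${}_4F_3$ transformation. The presence of the quadratic character $\eta_2 = \eta_4^2$ inside every Jacobi sum on the right-hand side suggests invoking either a Whipple-like ${}_4F_3 \to {}_3F_2$ reduction, or a Clausen/Orr type quadratic transformation from McCarthy's work, which trades the quartic characters $\eta_4$ for quadratic characters $\eta_2$. After reduction and conversion to Gauss sums, the three summands $J(\eta_4,\eta_2)^3$, $qJ(\eta_4,\eta_2)$, and $-J(\overline{\eta_4},\eta_2)^2$ should arise respectively from the principal part of the transformed sum, a degenerate boundary term carrying the explicit factor of $q$, and a conjugate contribution after the swap $\eta_4 \mapsto \overline{\eta_4}$.

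The main obstacle is bookkeeping the degenerate boundary contributions. In any finite-field analogue of a hypergeometric identity, terms where a character in the summation trivializes produce extra additive pieces (often carrying a factor of $q$) that are invisible in the archimedean picture. The summand $qJ(\eta_4,\eta_2)$ in Part \eqref{Gpart2} is almost certainly one such piece, and isolating it correctly --- rather than allowing it to cancel, be absorbed, or be double-counted --- is the key technical hurdle. Both parts additionally require careful tracking of sign factors of the form $\eta_n(-1)$ that appear when one applies the Hasse--Davenport product relation to convert products of Gauss sums into the Jacobi sums in the stated right-hand sides.
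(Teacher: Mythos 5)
Your treatment of part (\ref{Gpart1}) is essentially the paper's proof: the authors apply Greene's Theorem 4.35 (his two-term finite-field analogue of the well-poised ${}_3F_2$ summation) to the configuration $(\eta_3,\eta_3,\eta_3;\varepsilon,\varepsilon)$ and convert the two resulting binomial products into $J(\eta_3,\eta_3)^2$ and $-J(\eta_3^2,\eta_3^2)$, exactly as you describe.

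Part (\ref{Gpart2}) is where there is a genuine gap. The reduction you propose --- trading the ${}_4F_3$ for a lower-order or quadratically transformed object via a finite-field Whipple/quadratic identity --- is only available when $\eta_4$ is a \emph{square} in $\fqhat$, i.e.\ when $q\equiv 1\pmod 8$. The paper's actual route is to lift the ${}_4F_3$ to a McCarthy-normalized ${}_5F_4^\ast$ with auxiliary order-$8$ characters $\eta_8$ on top and bottom (via Greene's Theorem 3.15(ii)), then apply McCarthy's Whipple analogue (Theorem \ref{McCarthy5F4}), which splits that ${}_5F_4^\ast$ into two ${}_4F_3^\ast$'s plus a ${}_2F_1^\ast(-1)$ Kummer term. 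McCarthy's theorem explicitly distinguishes the cases ``$A$ a square'' and ``$A$ not a square'': when $\eta_4$ is not a square (i.e.\ $q\equiv 5\pmod 8$) the $\sum_{R^2=A}$ sum is empty and the whole transformation machinery collapses, so no amount of bookkeeping of degenerate boundary terms will produce the right-hand side this way. For $q\equiv 5\pmod 8$ the paper abandons character-sum manipulations entirely: it identifies both sides as traces of Frobenius under representations of $\Gal(\overline{\Q}/\Q(\sqrt{-1}))$ built from Gr\"ossencharacters, observes that the already-proved case gives agreement on the index-$2$ subgroup fixing $\Q(\sqrt{-1},\sqrt{2})$, invokes Clifford theory to conclude the two representations differ at most by quadratic twists $\varphi^{n_i}$, and pins down $n_i=0$ by checking a few primes. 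Your proposal contains no substitute for this second half, so as written it can at best prove the statement for $q\equiv 1\pmod 8$. (A secondary, smaller issue: the term $qJ(\eta_4,\eta_2)$ does not arise as a single ``degenerate boundary term''; in the paper it emerges only after combining the Kummer ${}_2F_1^\ast(-1)$ evaluation, the two ${}_4F_3^\ast$ evaluations, and a $J(\eta_8,\eta_8)^4$ contribution that must cancel, so the clean three-way attribution you sketch is not how the identity actually assembles.)
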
Here we observe $J(\overline{\eta_4},\eta_2)^2=\eta_4(-1)J(\overline{\eta_4},\overline{\eta_4})J(\overline{\eta_4},\overline{\eta_4}^2)$.
To prove Theorem \ref{GaussianTheorem} we use the work of Greene \cite{Greene} and McCarthy \cite{McCarthy}, except in case (\ref{Gpart2}) when $q\equiv 5 \pmod 8$, in which we use Gr\"ossencharacters and representation theory.   The reason we do this is because a key ingredient of our proof is Theorem 1.6 of McCarthy \cite{McCarthy}, for which we assume $\eta_4$ is a square, i.e., $q\equiv 1\pmod 8$. Combining this with the theory of Galois representations, we can reach our conclusion when $q\equiv 5\pmod 8$.   We wish to point out that the above results can be interpreted in terms of Galois representations.
\footnote{ In a different language, our results correspond to the explicit descriptions of some mixed weight hypergeometric motives arising from exponential sums which are initiated by Katz \cite{Katz}, and are explicitly formulated and implemented by a group of mathematicians including Beukers, Cohen, Rodriguez-Villegas and others (from private communication with H. Cohen and F. Rodriguez-Villegas).  Here we can use the explicit algebraic varieties to compute the Galois representations directly. A different algebraic model for the algebraic varieties is given in the following recent preprint \cite{BCM}. 
}
Result (1) describes the trace of the Frobenius element at $q$ in $\Gal(\overline \Q/\Q(\sqrt{-3}))$ under a 2-dimensional Galois representation arising from the second \'etale cohomology of $C_{3,1}$ in terms of Jacobi sums (and hence Gr\"ossencharacters); while (2) describes a 3-dimensional Galois representation of $\Gal(\overline \Q/\Q(\sqrt{-1}))$ arising from the third \'etale cohomology of $C_{4,1}$ in terms of Jacobi sums. Both cases are exceptional.  Consequently we can describe the local zeta function of $C_{3,1}$ and $C_{4,1}$ completely. For instance when $p\equiv 1 \pmod 3$ is prime, by the Hasse-Davenport relation for Jacobi sums (see \cite{IR90}), the local zeta function of $C_{3,1}$ over $\F_p$ is
$$ Z_{C_{3,1}}(T,p)= \frac{1}{(1-T)(1+(\alpha_p+\overline{\alpha}_p)T+pT^2)(1-p^2T)(1-(\alpha_p^2+\overline{\alpha}_p^2)T+p^2T^2)}$$
where  $\alpha_p=J(\eta_3,\eta_3)$. Note that the factor $(1+(\alpha_p+\overline{\alpha}_p)T+pT^2)$ appearing in the denominator has roots of absolute value $1/\sqrt{p}$; meanwhile following Weil's conjecture (see \cite{IR90}) such a term should appear  in the numerator instead. We believe the discrepancy is due to the fact that we are not computing using the smooth model of $C_{n,\l}$ as no resolution of singularities is involved so far. Similarly, we have for any prime $p\equiv 1\pmod 4$

\begin{multline*}Z_{C_{4,1}}(T,p)= 
(1+(\b_p^3+\overline{\b}_p^3)T+p^3T^2)(1+(\b_p+\overline{\b})pT+p^3T^2) \\
\cdot\frac{(1-(\b_p^2+\overline{\b}_p^2)T+p^2T^2)(1-a(p)T+p^3T^2)(1-pT) }{(1-T)(1-p^3T)},
\end{multline*} 
where $a(p)$ as in \eqref{AOresult} and $\beta_p=J(\eta_4,\eta_2)$.
The factor corresponding to $$y^2=(x_1x_2x_3)^3(1-x_1)(1-x_2)(1-x_3)(x_1-x_2x_3)$$ is
$$
Z_{C^{old}_{4,1}}(T,p)= \frac{(1-a(p)T+p^3T^2)(1-pT) }{(1-T)(1-p^3T)},
$$
and new primitive portion is
\begin{multline*}
Z_{C^{new}_{4,1}}(T,p)= 
{(1+(\b_p^3+\overline{\b}_p^3)T+p^3T^2)(1+(\b_p+\overline{\b})pT+p^3T^2)(1-(\b_p^2+\overline{\b}_p^2)T+p^2T^2). }
\end{multline*}

Part (1) of Theorem \ref{GaussianTheorem} explains why $-\G_p(\frac 13)^3$ appears to be a conjugate of $\G_p(\frac 13)^6$. There are two ways to specify a cubic character in $\widehat {\F_p^\times}$ when $p\equiv 1\pmod 3$, i.e. $\eta_3(x)\equiv x^{(p-1)/3}\pmod p$ for all $x\in \F_p$ or $\eta_3(x)\equiv x^{2(p-1)/3}\pmod p$.  Either way gives an embedding of
$$p^2\cdot  \hgthree{\eta_3}{\eta_3}{\eta_3}{\varepsilon}{\varepsilon}{1}_p$$
to $\Z_p$. Then the image of the Gaussian hypergeometric function is congruent to $-\G_p(\frac 13)^3$ or $\G_p(\frac 13)^6$ respectively via the Gross-Koblitz formula \cite{GK, GK-revisited}. Using this formula, we also prove the following result which relates Gaussian hypergeometric functions to truncated hypergeometric series.

\begin{lemma}\label{GaussToHyper} Let $r,n,j$ be positive integers with $1\le j<n$. Let $p\equiv 1 \pmod n$ be prime and $\eta_n\in \widehat{\F_p^\times}$ such that $\eta_n(x)\equiv x^{j(p-1)/n} \pmod p$ for each $x\in \F_p$.  Then,
\begin{multline*}
p^{r-1}\cdot{} _{r}F_{r-1}\left(
\begin{matrix}
\eta_n, & \eta_n, & \cdots, & \eta_n \\
   & \varepsilon, & \cdots, & \varepsilon \\
\end{matrix}
\, ; x
\right)_p
\equiv \\
(-1)^{r+1}\cdot \pFq{r}{r-1}{\frac{n-j}n& \frac{n-j}n& \cdots& \frac{n-j}n}{& 1& \cdots &1}{\frac 1x}_{(p-1)\left(\frac{n-j}n\right)}
\\+(-1)^{r+1+\frac{(p-1)}njr}\(x^{(p-1)\frac{n-j}n}-x^{\frac{p-1}nj}\)  \pmod p;
\end{multline*}

\begin{multline*}
p^{r-1}\cdot{} _{r}F_{r-1}\left(
\begin{matrix}
\overline{\eta_n}, & \overline{\eta_n}, & \cdots, & \overline{\eta_n} \\
   & \varepsilon, & \cdots, & \varepsilon \\
\end{matrix}
\, ; x
\right)_p
\equiv \\
(-1)^{r+1}\cdot p^r   \pFq{r+1}{r}{1 & 1 & \cdots & 1}{& \frac{2n-j}{n} & \cdots & \frac{2n-j}{n} }{\frac 1x}_{p-1} \pmod{p}.
\end{multline*}
\end{lemma}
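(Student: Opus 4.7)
The plan is to start from the character-sum definition of the Gaussian hypergeometric function, apply the Gross--Koblitz formula to convert Gauss sums into $p$-adic gamma values, and then recognize the resulting finite expression modulo $p$ as a truncated hypergeometric series, with explicit boundary contributions producing the additive correction.

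First, using Greene's definition, I would rewrite
\[
p^{r-1}\cdot {}_{r}F_{r-1}\left(\begin{matrix}\eta_n,&\ldots,&\eta_n\\ &\varepsilon,\ldots,&\varepsilon\end{matrix};x\right)_p = \frac{1}{p-1}\sum_{\chi\in\widehat{\F_p^\times}} \chi(-1)^r\, J(\eta_n\chi,\overline{\chi})^r\, \chi(x),
\]
then parameterize characters as $\chi = \omega^{-k}$ for $0\le k\le p-2$ with $\omega$ the Teichm\"uller character, and decompose each Jacobi sum via $J(A,B) = g(A)g(B)/g(AB)$ whenever $AB\ne\varepsilon$. Applying Gross--Koblitz in the form $g(\omega^{-a}) = -\pi^{a}\,\Gamma_p(a/(p-1))$ with $\pi^{p-1}=-p$ converts each Gauss sum into a $p$-adic gamma value weighted by a power of $\pi$. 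After collecting the powers of $\pi$ across the $r$ factors, any exponent exceeding $p-1$ contributes a factor of $p$ and hence vanishes modulo $p$, leaving a product of $p$-adic gamma values.

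Next, using the functional equation $\Gamma_p(z+1)=-z\,\Gamma_p(z)$ on $\Z_p^\times$ together with the reflection identity $\Gamma_p(z)\Gamma_p(1-z)=\pm 1$ to pivot between $j/n$ and $(n-j)/n$, I would identify the product of $p$-adic gammas with the Pochhammer ratio $((n-j)/n)_k^r/(k!)^r$, which equals $\binom{-(n-j)/n}{k}^r(-1)^{rk}$. The sum over $\chi$ then becomes the truncated hypergeometric series in the variable $1/x$, arising from $\chi(x)=\omega(x)^{-k}$. The main obstacle, and the source of the additive correction, is the treatment of degenerate characters: for $\chi=\varepsilon$ (i.e., $k=0$) and for $\chi$ satisfying $\eta_n\chi=\varepsilon$ (i.e., $k=j(p-1)/n$), one Gauss sum in the decomposition $J(\eta_n\chi,\overline{\chi}) = g(\eta_n\chi)g(\overline{\chi})/g(\eta_n)$ equals $-1$, and Gross--Koblitz must be replaced by direct evaluation. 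A careful computation of these two boundary summands produces precisely the monomials $x^{(p-1)(n-j)/n}$ and $x^{(p-1)j/n}$ with the sign $(-1)^{r+1+(p-1)jr/n}$. Truncating at $k=(p-1)(n-j)/n$ corresponds to the range over which $((n-j)/n)_k$ is a $p$-adic unit; for larger $k$ the Pochhammer symbol vanishes in $\F_p$.

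For the second congruence, I would apply the same strategy with $\overline{\eta_n}$ in place of $\eta_n$; a second application of the reflection identity moves the $\Gamma_p$-factors from the numerator to the denominator, converting the shifted Pochhammer symbol into the reciprocal form $(1)_k^{r}/((2n-j)/n)_k^{r}$ up to sign. This recasts the finite sum as $p^r$ times a $_{r+1}F_{r}$ with lower parameters $(2n-j)/n$, truncated at $p-1$. Since $\overline{\eta_n}$ does not produce the same degenerate Gauss sums in the relevant range, no additive correction appears, and the explicit factor $p^r$ on the right-hand side emerges from the $\pi$-powers that survive the Gross--Koblitz substitution.
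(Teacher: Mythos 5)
Your outline follows essentially the same route as the paper's proof: the authors likewise expand Greene's definition into $\frac{1}{p-1}\sum_{\chi}J(\eta_n\chi,\overline\chi)^r\chi(-1)^r\chi(x)$, isolate the degenerate character for which the Jacobi sum is $-1$, apply Gross--Koblitz to the remaining Gauss-sum quotients (with the wrapped powers of $\pi_p$ supplying the factors of $p^r$ that kill the tail), and then convert the resulting $p$-adic Gamma quotients into Pochhammer symbols via the functional equation, exactly as in their preparatory lemma and the subsequent reduction. The only part left vague in your sketch is the precise bookkeeping of which single degenerate character and which truncation-boundary term produce the two monomials in the correction $\left(x^{(p-1)(n-j)/n}-x^{(p-1)j/n}\right)$, but this is the same computation the paper carries out explicitly.
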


Thus,  \eqref{eq:1} and  \eqref{eq:2} hold modulo $p$.  It is shown in \cite{LR} that \eqref{eq:1} holds modulo $p^3$.  These kinds of stronger congruences are known as \emph{supercongruences} as they are stronger than what the theory of formal groups can predict. We will establish a few here.  In particular, we prove the claim that Conjecture \ref{thm:4} holds modulo $p^2$.

\begin{theorem}\label{thm:4a} Conjecture \ref{thm:4} holds modulo $p^2$. Namely, for $n\geq 3$, and $p\equiv 1 \pmod{n}$ prime,
\[
\pFq{n}{n-1}{\frac{n-1}{n} & \frac{n-1}{n}  & \ldots & \frac{n-1}{n}}{ & 1 & \ldots  & 1}{1}_{p-1}:=\sum_{k=0}^{p-1} \binom{\frac{1-n}n}{k}^n(-1)^{kn}\equiv -\G_p\left(\frac{1}{n}\right)^n \pmod{p^2}.
\]

\end{theorem}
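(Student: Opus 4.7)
The plan is to truncate the sum, express each term via Morita's $p$-adic gamma function, and then invoke the Gross--Koblitz formula together with the product-of-Jacobi-sums identity recalled in the introduction.

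Setting $a := (n-1)/n$, the identity $\binom{-(n-1)/n}{k}(-1)^k = (a)_k/k!$ rewrites the left-hand side as $S := \sum_{k=0}^{p-1}\bigl((a)_k/k!\bigr)^n$. Let $M := (p-1)(n-1)/n$. A direct $p$-adic valuation check shows $a+j \in \Z_p^\times$ for $0 \le j \le M-1$ while $a+M = pa \in p\Z_p$; hence $(a)_k/k! \in \Z_p^\times$ for $0 \le k \le M$, and $v_p((a)_k/k!) \ge 1$ for $M < k \le p-1$. Since $n \ge 3$, the tail terms have $v_p \ge n \ge 3 > 2$ and vanish modulo $p^2$, yielding the truncation
\[ S \equiv \sum_{k=0}^{M}\bigl((a)_k/k!\bigr)^n \pmod{p^2}. \]

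For each $k$ in this range, I would use Morita's identities $(a)_k = (-1)^k \G_p(a+k)/\G_p(a)$ (valid since no argument hits a pole) and $k! = (-1)^{k+1}\G_p(k+1)$ to rewrite the summand as $-\G_p(a+k)/(\G_p(a)\G_p(k+1))$, so that
\[ S \equiv \frac{(-1)^n}{\G_p(a)^n}\sum_{k=0}^{M}\frac{\G_p(a+k)^n}{\G_p(k+1)^n} \pmod{p^2}. \]
The main step is then to identify this $p$-adic gamma sum with $-\G_p(1/n)^n$ modulo $p^2$, using the Gross--Koblitz formula $g(\eta_n^j) = -\pi^{(n-j)m}\G_p((n-j)/n)$ (with $\pi^{p-1}=-p$, $m=(p-1)/n$), the product identity $\prod_{j=1}^{n-2}J(\eta_n,\eta_n^j) = (-1)^{n-2+(1+(n-1)p)/n}\G_p(1/n)^n$ recalled in the introduction, and the reflection $\G_p(x)\G_p(1-x) = \pm 1$.

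The main obstacle is the $p^2$ refinement. Modulo $p$ alone the congruence already follows from Lemma \ref{GaussToHyper} at $x=1$ (the correction term $x^{(p-1)(n-j)/n}-x^{(p-1)j/n}$ vanishes there) combined with the reduction of $p^{n-1}\cdot{}_nF_{n-1}(\eta_n,\ldots,\eta_n;\varepsilon,\ldots,\varepsilon;1)_p$ to $\pm\G_p(1/n)^n \pmod p$ via Gross--Koblitz. Lifting to mod $p^2$ requires tracking the first-order $p$-adic Taylor expansions $\G_p(pa+y) \equiv \G_p(y)(1 + pa\,G_1(y)) \pmod{p^2}$ (where $G_1 := \G_p'/\G_p$) for the boundary term $k=M$, together with a symmetry argument pairing $k \leftrightarrow M-k$ that exploits the key equality $a+M = pa$; this should force the harmonic-type corrections $\sum_{j=1}^{k}1/j$ and $\sum_{j=0}^{k-1}1/(a+j)$ arising from the Taylor expansions to cancel modulo $p^2$, leaving only the main term $-\G_p(1/n)^n$.
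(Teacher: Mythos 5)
There is a genuine gap at the heart of your proposal: you never supply an \emph{evaluation} input that could pin down the sum modulo $p^2$. Your truncation at $M=(p-1)(n-1)/n$ and the rewriting of $(a)_k/k!$ via $\G_p$ are both fine (the truncation is exactly the paper's Lemma \ref{AltTrunc} with $j=n-1$), but after that you are left with $\sum_{k=0}^{M}c_k^n$, $c_k=(a)_k/k!$, and no identity to compare it against. The pairing $k\leftrightarrow M-k$ that you hope will force the harmonic corrections to cancel in fact only yields a self-consistency relation: using $a+M=pa$ one computes
\[
c_{M-k}\equiv c_M\,c_k\Bigl(1-pa\sum_{j=0}^{k-1}\bigl(\tfrac1{a+j}-\tfrac1{1+j}\bigr)\Bigr)\pmod{p^2},
\qquad c_M^{\,n}\equiv 1\pmod p,
\]
so summing $c_{M-k}^n$ over $k$ reproduces $S\equiv c_M^nS-(\text{first-order terms})$, i.e.\ a relation between $S\bmod p$ and other harmonic-weighted sums — it cannot output the value $-\G_p(1/n)^n$ modulo $p^2$. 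The Gross--Koblitz formula and the Jacobi-sum product from the introduction likewise only control things modulo $p$ here, because the passage from $\G_p(k/(p-1))$ to $\G_p(-k)$ carries an $O(p)$ error; indeed your own fallback, Lemma \ref{GaussToHyper}, is intrinsically a mod $p$ statement (and even there, the evaluation of the Gaussian ${}_nF_{n-1}$ at $1$ as $\pm\G_p(1/n)^n$ is only established in the paper for $n=3,4$).

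The missing idea is a classical hypergeometric evaluation formula applied to a $p$-adic \emph{deformation} of the truncated series. The paper uses the Karlsson--Minton formula \eqref{KM1} with $b_1=1+yp$, $b_2=\cdots=b_n=1$, $m_i=(p-1)/n$: this evaluates a terminating ${}_{n+1}F_n$, which reduces mod $p^2$ to the truncated ${}_nF_{n-1}$ times $1-A(y+1)p$ for some unknown $A\in\Z_p$ coming from harmonic sums, in closed form as $(p-1)!/\bigl((1+yp)_m(m!)^{n-1}\bigr)$. Expanding that closed form with the Taylor expansion of $\G_p$ (Proposition \ref{GpFacts}d) puts the error on the right-hand side also in the form $(\text{const})\cdot(1+y)p$, and the single choice $y=-1$ annihilates both error terms simultaneously, yielding the congruence. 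If you want to salvage your route, you must replace the hoped-for symmetry cancellation by such an exact identity (Karlsson--Minton, Dixon, or McCarthy's $_nG_n$ machinery); without it the mod $p^2$ statement is not reached.
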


\noindent \emph{Remark.}   Theorem \ref{thm:4a} also holds for $n=2$, due to Mortenson \cite{Mortenson}.

We note that in \cite[Defn. 1.4]{McCarthy3}, McCarthy defines a new function $_nG_n[\cdots]$ in terms of sums and ratios of $p$-adic Gama functions. Recently, the second author and McCarthy produced families of congruences between these $_nG_n$ functions and truncated hypergeometric series \cite{Fuselier-McCarthy}. New identities for these functions have also recently been obtained  by McCarthy, et. al. \cite{McCarthy-Barman-S}  and it is possible they could be used to prove Conjecture \ref{thm:4} in full.

\medskip

For the truncated hypergeometric series related to $C_{4,1}$ we have another result.

\begin{theorem}\label{H4}
For each prime $p\equiv 1 \pmod 4$,
\[
\pFq{4}{3}{\frac14 & \frac14 & \frac14 & \frac14}{&1&1&1}{1}_{p-1}=\sum_{k=0}^{p-1}\binom{-\frac14}{k}^4 \equiv
 (-1)^{\frac{p-1}4}\G_p\left(\frac12\right) \G_p\left(\frac 14\right)^6 \pmod {p^4}.
\]
\end{theorem}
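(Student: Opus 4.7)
The plan is to reduce the full sum modulo $p^4$ to a much shorter sum, and then evaluate that short sum in closed form via $p$-adic Gamma identities, aligning the result with the Gross--Koblitz interpretation of the right-hand side.

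\textbf{Step 1 (Truncation).} For $p\equiv 1 \pmod 4$ and $(p+3)/4 \le k \le p-1$, the Pochhammer symbol $(1/4)_k$ contains the factor $\frac14 + \frac{p-1}{4} = \frac{p}{4}$, so $(1/4)_k^4 \in p^4 \Zp$. Since $k!$ is a $p$-adic unit throughout this range, every term $(1/4)_k^4/k!^4$ lies in $p^4 \Zp$, and therefore
\[
\pFq{4}{3}{\frac14,\frac14,\frac14,\frac14}{,1,1,1}{1}_{p-1} \equiv \sum_{k=0}^{(p-1)/4} \frac{(1/4)_k^4}{k!^4} \pmod{p^4}.
\]
This reduces the problem to a short sum of length $(p-1)/4+1$.

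\textbf{Step 2 (Reformulation via $p$-adic Gamma).} I would rewrite each Pochhammer ratio as a ratio of $p$-adic Gamma values using $(1/4)_k = \pm \G_p(1/4+k)/\G_p(1/4)$ (with an explicit sign tracking how many times a $p$-adic unit factor is crossed) and $k! = \pm \G_p(1+k)$. Applying the reversal $k\mapsto (p-1)/4-k$ together with the reflection identity $\G_p(x)\G_p(1-x) = \pm 1$, the short sum acquires a symmetric form in which a leading factor of $\G_p(1/4)^{-4}$ can be pulled outside.

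\textbf{Step 3 (Closed form and Gross--Koblitz).} Next I would collapse the symmetrized sum using a Whipple-type $_7F_6 \to {}_4F_3$ transformation, specialized so that the $_4F_3$ matches the reversed sum. The corresponding $_7F_6$ side evaluates to a ratio of Pochhammer symbols at $(p-1)/4$; modulo $p^4$, all but the leading term contribute factors of $p^4$ or higher. After applying Gauss's multiplication formula for $\G_p$ at arguments $\frac{r}{4}$ ($r=0,1,2,3$) and $\G_p(3/4) = \pm \G_p(1/4)^{-1}$, the closed form collapses to $(-1)^{(p-1)/4}\G_p(1/2)\G_p(1/4)^6$. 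Alternatively, one may route the computation through Theorem \ref{GaussianTheorem}(\ref{Gpart2}): under Gross--Koblitz the Jacobi sum $J(\eta_4,\eta_2)^3$ matches $(-1)^{(p-1)/4}\G_p(1/2)\G_p(1/4)^6$, while the contributions $pJ(\eta_4,\eta_2)$ and $-J(\overline{\eta_4},\eta_2)^2$ are of higher $p$-adic order on the truncated side.

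\textbf{Main obstacle.} The critical difficulty is maintaining $p^4$ precision. While mod $p$ and mod $p^2$ congruences would follow from Lemma \ref{GaussToHyper} and (the method of) Theorem \ref{thm:4a} respectively, pushing to $p^4$ requires either stronger $p$-adic expansions of each $\G_p$ factor or a terminating hypergeometric identity that is exact for the truncated sum. I expect the error-term bookkeeping in the Whipple transformation, namely verifying that every discarded Pochhammer tail has $p$-adic valuation at least $4$, to be the most delicate step; McCarthy's $_nG_n$ formalism \cite{McCarthy3, Fuselier-McCarthy} may provide a systematic way to organize these corrections.
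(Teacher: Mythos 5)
Your Step 1 is fine and is exactly Lemma \ref{AltTrunc} with $r=4$, and your instinct to look for a terminating very-well-poised ${}_7F_6\to{}_4F_3$ evaluation is the right one: the paper's proof rests on Dougall's formula \eqref{Dougall}. But the proposal has a genuine gap precisely at the point you flag as the ``main obstacle,'' and the gap is not mere bookkeeping. The undeformed truncated sum $\sum_{k=0}^{(p-1)/4}(\tfrac14)_k^4/k!^4$ is not itself the specialization of any terminating Dougall/Whipple identity, so no single ``exact'' application of such a transformation, followed by reversal and reflection, will collapse it to a closed form; your assertion that ``all but the leading term contribute factors of $p^4$ or higher'' is not true of the series you would actually obtain. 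What the paper does instead is introduce a free $p$-adic deformation parameter $u$, taking $a=\tfrac14$, $b=\tfrac58$, $c=\tfrac18$, $d=\tfrac{1+pu}{4}$, $e=\tfrac{1+(1-u)p}{4}$, $m=\tfrac{p-1}{4}$ in Dougall's formula, so that the balancing condition $2a+1=b+c+d+e-m$ holds identically in $u$ and the left side reduces to a terminating ${}_4F_3$ whose parameters differ from $\tfrac14,\tfrac14,\tfrac14,\tfrac14;1,1,1$ only by multiples of $p$. The harmonic-sum expansion of this deformed series then shows that its deviation from the target sum has no term of order $p$, a term of order $p^2$ proportional to $u^2-u+1$, and a term of order $p^3$ proportional to $u(u-1)$; the exactly evaluated right side, rewritten via the duplication formula and the Taylor expansion of $\Gamma_p$ from Proposition \ref{GpFacts}, has errors of the same polynomial shape in $u$. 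Letting $u$ vary and comparing coefficients of $1$, $u$, $u^2$ eliminates the unknown constants $A_i,B_i$ and yields the congruence modulo $p^4$. Without this deformation-and-interpolation step there is no mechanism in your outline for reaching $p^4$ precision.

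Your fallback route through Theorem \ref{GaussianTheorem}(\ref{Gpart2}) cannot be repaired to give the result: Gaussian hypergeometric functions are related to truncated hypergeometric series only modulo $p$ (Lemma \ref{GaussToHyper}), so any argument passing through the finite-field identity loses three of the four powers of $p$ at the very first step, regardless of the $p$-adic valuations of the individual Jacobi-sum terms.
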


Corresponding to Ahlgren and Ono's result \eqref{AOresult}, Kilbourn \cite{Kilbourn} shows the supercongruence
$$\pFq{4}{3}{\frac 12,\frac 12,\frac 12,\frac12}{,1,1,1}{1}_{p-1}:=\sum_{k=0}^{p-1} \binom{-\frac 12}{k}^4\equiv a(p)\pmod {p^3},$$
where $a(p)$ is defined as in \eqref{AOresult}.

Supercongruences are not only intellectually appealing, they are of very practical use for our computations. For instance, Theorem \ref{H4} corresponds to the properties of the third \'etale cohomology group of  $C_{4,1}$ as mentioned earlier. By the Hasse-Weil bounds for them, which are constant multiplies of $p$ and $p^{3/2}$ respectively, the supercongruence results allowed us to compute the traces of Frobenius without any ambiguity, from which we were able to nail down the local zeta functions of $C_{3,1}$ and $C_{4,1}$ and discover Theorem \ref{GaussianTheorem} numerically before proving it. There are a variety of different techniques for proving such results and each has its own strength. See \cite{CDLNS} for another Women in Numbers (WIN) project on supercongruences, which was motivated by the work of Zudilin \cite{Zudilin} and his conjectures.
 We prove Theorems \ref{thm:4a} and \ref{H4} by deforming truncated hypergeometric series using hypergeometric evaluation identities (several of them are due to Whipple \cite{Whipple, AAR}) together with $p$-adic analysis via harmonic sums and $p$-adic Gamma functions. This technique is originated in \cite{CLZ} and is later formulated explicitly in \cite{LR}.

\subsection{Outline of this paper}
Section \ref{Prelim} contains some background material.  In Section \ref{Leg}, we consider the familiar setting of Legendre curves. This section serves as a showcase of our techniques without getting into too much technicality.  We prove Theorem \ref{thm:2} and Lemma \ref{GaussToHyper} in Section \ref{ss:proofofthm4}. Section \ref{Gauss} is devoted to proving the results on Gaussian hypergeometric functions in Theorem \ref{GaussianTheorem}.  In Section \ref{Super}, we prove Theorem \ref{thm:4a}, based on an idea of Zudilin (private communication), and then prove Theorem \ref{H4}.   Sections \ref{ss:proofofthm4}, \ref{Gauss}, and \ref{Super} are technical in nature. In Section \ref{Rem} we end with some remarks including a few conjectures based on our numerical data computed using \texttt{Sage}.

\section{Preliminaries}\label{Prelim}
\subsection{Generalized hypergeometric series and truncation}
For a positive integer $r$, and $\alpha_i$, $\beta_i \in \mathbb{C}$ with $\beta_i\not\in\{\ldots, -3, -2, -1\}$, the (generalized) hypergeometric series $_{r}F_{r-1}$ is defined by
\[
\pFq{r}{r-1}{\alpha_1 & \alpha_2 & \ldots & \alpha_{r}}{& \beta_1 & \ldots  & \beta_{r-1}}{\l} :=
\sum_{k= 0}^{\infty} \frac{(\alpha_1)_k(\alpha_2)_k \ldots (\alpha_{r})_k}{(\beta_1)_k
\ldots (\beta_{r-1})_k} \cdot \frac{\l^k}{k!}
\]
where $(a)_{0}:=1$ and $\displaystyle{(a)_k :=a(a+1)\cdots(a+k-1)}$  are rising factorials.   This series converges for $|\l|<1$.

When we truncate the above sum at $k=m$, we use the subscript notation
\[
\pFq{r}{r-1}{\alpha_1 & \alpha_2 & \ldots & \alpha_{r}}{ & \beta_1 & \ldots  & \beta_{r-1}}{\l}_{m} :=
\sum_{k= 0}^{m} \frac{(\alpha_1)_k(\alpha_2)_k \ldots (\alpha_{r})_k}{(\beta_1)_k
\ldots (\beta_{r-1})_k} \cdot \frac{\l^k}{k!}.
\]
We note that the books by Slater \cite{Slater}, Bailey \cite{Bailey}, and Andrews, Askey and Roy \cite{AAR} are excellent sources for information on classical hypergeometric series.

\noindent The following gives an alternate truncation for hypergeometric series modulo powers of primes.
\begin{lemma}\label{AltTrunc}
Let $n\geq 2$ be a positive integer, $j$ an integer $1\le j<n$, and $p\equiv 1 \pmod{n}$ prime.  Then for $x\in \Z_p$,
\[
\pFq{r}{r-1}{\frac{j}{n} & \frac{j}{n}  & \cdots & \frac{j}{n}}{ & 1 & \cdots  & 1}{x}_{\frac{j}{n}(p-1)} \equiv \pFq{r}{r-1}{\frac{j}{n} & \frac{j}{n}  & \cdots & \frac{j}{n}}{ & 1 & \cdots  & 1}{x}_{p-1} \pmod{p^r}.
\]

\end{lemma}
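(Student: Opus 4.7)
The plan is to compare the two truncated sums term-by-term and show that every term with index $k$ in the range $j(p-1)/n + 1 \le k \le p-1$ is divisible by $p^r$.

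The general term of the $_rF_{r-1}$ in question is
\[
T_k(x) \;=\; \frac{\bigl((j/n)_k\bigr)^r}{(k!)^r}\, x^k,
\]
so I need to analyze the $p$-adic valuation of $(j/n)_k$ for the indicated range of $k$. Since $p\equiv 1\pmod n$, the rational number $j/n$ lies in $\Z_p$, and from $n\cdot(p-1)/n \equiv -1\pmod p$ I get $-j/n \equiv j(p-1)/n\pmod p$. Consequently, the factor $(j/n)+i$ appearing in $(j/n)_k=\prod_{i=0}^{k-1}(j/n+i)$ lies in $p\Z_p$ precisely when $i\equiv j(p-1)/n\pmod p$. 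In the range $0\le i\le p-2$, which is the relevant range when $k\le p-1$, there is exactly one such $i$, namely $i=j(p-1)/n$ (and this lies in the range because $1\le j<n$ forces $j(p-1)/n\le (n-1)(p-1)/n<p-1$).

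From this I conclude: for $k\le j(p-1)/n$ the product $(j/n)_k$ contains no factor divisible by $p$, while for $j(p-1)/n+1 \le k \le p-1$ it contains exactly one such factor and hence has $p$-adic valuation exactly $1$. Therefore $v_p\bigl(((j/n)_k)^r\bigr)=r$ for $k$ in the latter range. On the other hand, for $k\le p-1$ one has $v_p(k!)=0$, so the denominator $(k!)^r$ is a unit in $\Z_p$, and since $x\in\Z_p$, the factor $x^k$ is also in $\Z_p$. Putting this together, $v_p(T_k(x))\ge r$ for every $k$ with $j(p-1)/n < k \le p-1$.

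Summing these individual bounds gives
\[
\pFq{r}{r-1}{\frac{j}{n} & \cdots & \frac{j}{n}}{ & 1 & \cdots  & 1}{x}_{p-1}
-\pFq{r}{r-1}{\frac{j}{n} & \cdots & \frac{j}{n}}{ & 1 & \cdots  & 1}{x}_{\frac{j}{n}(p-1)}
\;=\;\sum_{k=j(p-1)/n+1}^{p-1} T_k(x) \;\equiv\; 0 \pmod{p^r},
\]
which is the desired congruence. The only step requiring any care is the identification of the unique $i\in\{0,1,\ldots,p-2\}$ for which $j/n+i\equiv 0\pmod p$ and the verification that this $i$ does not exceed $p-2$; everything else is routine bookkeeping on $p$-adic valuations. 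No genuine obstacle arises.
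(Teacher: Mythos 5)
Your argument is correct and is essentially the paper's own proof: both rest on the observation that for $k>\frac{j(p-1)}{n}$ the rising factorial $\left(\frac{j}{n}\right)_k$ contains the factor $\frac{j}{n}+\frac{j(p-1)}{n}=p\cdot\frac{j}{n}\in p\Z_p$, so each extra term is divisible by $p^r$ while $(k!)^r$ remains a unit. Your version merely spells out the uniqueness of the offending index, which the paper leaves implicit.
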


\begin{proof}
The lemma follows from the fact that when $\frac{j(p-1)}n+1\leq k \leq (p-1)$, the rising factorial $\(\frac jn \right)_k\in p\Z_p$, since it contains the factor $p\left( \frac jn\right)$, while $(1)_k$ is not divisible by $p$.
\end{proof}

\subsection{Euler's integral formula and higher generalization}
When $\rm{Re}(\beta_1)>\rm{Re}(\alpha_2)>0$, Euler's integral representation for $_2F_1$ \cite{AAR} states that
\[
\pFq{2}{1}{\alpha_1,\alpha_2}{,\beta_1}{\l} = \frac{\G(\beta_1)}{\G(\alpha_2)\G(\beta_1-\alpha_2)} \int_0^1 x^{\alpha_2-1}(1-x)^{\beta_1-\alpha_2-1}(1-\l x)^{-\alpha_1}dx.
\]
More generally, one has that when $\text{Re} \left(\beta_r \right)>\text{Re} \left(\a_{r+1} \right)>0$ (see \cite[(2.2.2)]{AAR})
\begin{multline}\label{eq:gen-Euler-int}
\pFq{r+1}{r}{\alpha_1 & \alpha_2 & \ldots & \alpha_{r+1}}{ & \beta_1 & \ldots  & \beta_{r}}{\l} =  \frac{\G(\beta_r)}{\G(\alpha_{r+1})\G(\beta_r-\alpha_{r+1})} \\
\cdot  \int_0^1 x^{\alpha_{r+1}-1}(1-x)^{\beta_r -\alpha_{r+1} -1} \pFq{r}{r-1}{\alpha_1 & \alpha_2 & \ldots & \alpha_{r}}{ & \beta_1 & \ldots  & \beta_{r-1}}{\l x}dx.
\end{multline}
From the above two integral formulas, one can derive that for each $1\le j\le n-1$ the series $\pFq{n}{n-1}{\frac jn&\frac jn&\cdots&\frac jn}{&1&\cdots &1}{\l}$, with a suitable beta quotient factor, is a period of $C_{n,\l}$.

\subsection{Gaussian hypergeometric functions}\label{GaussSec}

Let $p$ be prime and let $q=p^e$.
We extend any character $\chi\in\ \widehat{\mathbb{F}_{q}^{\times}}$ to all of $\fq$ by setting $\chi(0)=0$, including the trivial character $\varepsilon$, so that $\varepsilon(0)=0$.  For $A,B\in\fqhat$, let $J(A,B):=\sum_{x\in \fq}A(x)B(1-x)$ denote the Jacobi sum and define

\begin{equation*}
\binom{A}{B}:=\frac{B(-1)}{q}J(A,\overline{B})=\frac{B(-1)}{q} \sum_{x\in \fq} A(x)\overline{B}(1-x).
\end{equation*}

In \cite{Greene}, Greene defines a finite field analogue of hypergeometric series called Gaussian hypergeometric functions, defined below.
\begin{definition}[\cite{Greene} Defn. 3.10] \label{def: HGF-finite}
If $n$ is a positive integer, $x\in\fq$, and $A_0,A_1,\dots,A_n,$\\$B_1,B_2,\dots,B_n \in \widehat{\mathbb{F}_{q}^{\times}}$, then
$$ _{n+1}F_{n} \left(
\begin{matrix}
A_0, & A_1, & \dots, & A_n \\
     & B_1, & \dots, & B_n \\
\end{matrix}
; x
\right)_q
:= \frac{q}{q-1} \sum_{\chi\in\widehat{\mathbb{F}_{q}^{\times}}} \binom{A_0\chi}{\chi} \binom{A_1\chi}{B_1\chi} \dots \binom{A_n\chi}{B_n\chi} \chi(x).$$
\end{definition}


Greene showcases a variety of identities satisfied by his Gaussian hypergeometric functions, many of which provide direct analogues for transformations of classical hypergeometric series. For example, he provides a finite field analogue of \eqref{eq:gen-Euler-int}, shown below.

\begin{theorem}[Greene \cite{Greene}]\label{Greene-3.13}
For characters $A_0,A_1,\dots,A_n,B_1,\dots,B_n$ in $\fqhat$, and $x\in\fq$,
\begin{multline*}
_{n+1}F_{n} \left(
\begin{matrix}
A_0, & A_1, & \dots, & A_n \\
     & B_1, & \dots, & B_n \\
\end{matrix}
; x
\right)_q
= \\
\frac{A_nB_n(-1)}{q}  \cdot
\sum_{y}  {}_nF_{n-1} \left( \begin{matrix}
A_0, & A_1, & \dots, & A_{n-1} \\
     & B_1, & \dots, & B_{n-1} \\
\end{matrix}
; xy
\right)_q
\cdot A_n(y)\overline{A_n}B_n(1-y).
\end{multline*}
\end{theorem}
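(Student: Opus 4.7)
The plan is to unfold both sides using Definition \ref{def: HGF-finite} and to reduce the desired equality to a short manipulation of Jacobi sums, mirroring the role that the Beta integral plays in the classical derivation of \eqref{eq:gen-Euler-int}. Starting from the right-hand side, I would expand the inner $_{n}F_{n-1}(\,\cdot\,;xy)_q$ as a sum over a character $\chi\in\fqhat$ and interchange the order of the summations over $y\in\fq$ and $\chi$. All dependence on $y$ then collects into
$$\sum_{y\in\fq}(A_n\chi)(y)\,(\overline{A_n}B_n)(1-y)=J(A_n\chi,\overline{A_n}B_n),$$
so the right-hand side becomes
$$\frac{A_nB_n(-1)}{q-1}\sum_{\chi}\binom{A_0\chi}{\chi}\binom{A_1\chi}{B_1\chi}\cdots\binom{A_{n-1}\chi}{B_{n-1}\chi}\chi(x)\,J(A_n\chi,\overline{A_n}B_n).$$

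The second step is to re-express this last Jacobi sum as a multiple of $\binom{A_n\chi}{B_n\chi}$. Using the Gauss-sum factorisation $J(\alpha,\beta)=g(\alpha)g(\beta)/g(\alpha\beta)$ for nontrivial $\alpha,\beta,\alpha\beta$, together with $g(\psi)g(\overline\psi)=\psi(-1)q$ and the defining relation $\binom{A_n\chi}{B_n\chi}=\tfrac{B_n\chi(-1)}{q}J(A_n\chi,\overline{B_n\chi})$, a short computation yields
$$J(A_n\chi,\overline{A_n}B_n)=q\,A_nB_n(-1)\,\binom{A_n\chi}{B_n\chi}.$$
Substituting this back and invoking $A_nB_n(-1)^2=1$ collapses the prefactor and produces precisely $\frac{q}{q-1}\sum_\chi\binom{A_0\chi}{\chi}\binom{A_1\chi}{B_1\chi}\cdots\binom{A_n\chi}{B_n\chi}\chi(x)$, which is the left-hand side by Definition \ref{def: HGF-finite}.

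The principal obstacle is the careful treatment of the finitely many degenerate characters $\chi$ for which one of $A_n\chi$, $B_n\chi$, or $\overline{A_n}B_n$ equals $\varepsilon$, since in those cases the Gauss-sum identity $J(\alpha,\beta)=g(\alpha)g(\beta)/g(\alpha\beta)$ fails and the convention $\varepsilon(0)=0$ causes the terms $y\in\{0,1\}$ in the Jacobi sum to behave asymmetrically under the summation swap. I would dispose of each such exceptional $\chi$ by returning to the defining formula $J(A,B)=\sum_x A(x)B(1-x)$ and to Greene's explicit evaluations of $\binom{\varepsilon}{\cdot}$ and $\binom{\cdot}{\varepsilon}$, verifying by direct case analysis that the defect in the Gauss-sum factorisation is exactly matched by the omitted boundary contributions, so that the identity persists uniformly across $\chi$.
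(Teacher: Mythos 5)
The paper does not prove this statement; it is imported verbatim as Theorem 3.13 of Greene \cite{Greene}, so there is no internal proof to compare against. Your argument is essentially Greene's original one and is correct: expand the inner ${}_nF_{n-1}$, swap the $\chi$- and $y$-sums, recognize $J(A_n\chi,\overline{A_n}B_n)$, and convert it into $q\,A_nB_n(-1)\binom{A_n\chi}{B_n\chi}$. The one place you make life harder than necessary is the conversion step: routing it through $J(\alpha,\beta)=g(\alpha)g(\beta)/g(\alpha\beta)$ forces the exceptional-character analysis you flag at the end. That analysis would indeed succeed, but it can be avoided entirely: with the convention $\chi(0)=0$ for every character (including $\varepsilon$), both $J(C,D)$ and $J(C,\overline{CD})$ are supported on $x\in\fq\setminus\{0,1\}$, and the substitution $x\mapsto x/(x-1)$, which is an involution of that set, gives $J(C,\overline{CD})=C(-1)\,J(C,D)$ \emph{uniformly} in $C,D$. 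Applied with $C=A_n\chi$, $D=\overline{A_n}B_n$ this is exactly the identity you need (equivalently, Greene's relation $\binom{A}{B}=\binom{A}{A\overline{B}}$), and no case analysis over degenerate $\chi$ is required.
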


To extend Greene's program, McCarthy provides a modification of Greene's functions below.  We let $g(\chi)=\displaystyle\sum_{x\in\F_q}\chi(x)\zeta_p^{\mathrm{Tr}(x)}$ denote the Gauss sum of $\chi$, and $\Tr$ the usual trace map form $\F_q$ to $\F_p$.

\begin{definition}\cite{McCarthy} For characters $A_0,A_1,\dots,A_n,B_1,\dots,B_n$ in $\fqhat$,
\begin{multline*}
  {}_{n+1}F_n\(\begin{matrix}
A_0, &A_1,&\ldots,&A_n  \\
   & B_1,  & \ldots,&B_n\\
\end{matrix};x\)_q^\ast
:= 
\frac1{q-1}\sum_{\chi\in\fqhat}\prod_{i=0}^n\frac{g(A_i\chi)}{g(A_i)}\prod_{j=1}^n\frac{g(\overline{B_j\chi})}{g(\overline{B_j})}g(\overline\chi)
\chi(-1)^{n+1}\chi(x).
\end{multline*}
\end{definition}

\noindent McCarthy makes explicit how the two hypergeometric functions are related, via the following.

\begin{prop}[McCarthy \cite{McCarthy}]\label{Greene-McCarthy} If $A_0\neq \varepsilon$ and $A_i\neq B_i$ for each $1\leq i \leq n$, then
$$   {}_{n+1}F_n\(\begin{matrix}
A_0, &A_1,&\ldots,&A_n  \\
   & B_1,  & \ldots,&B_n\\
\end{matrix};x\)_q^\ast
= \left[ \prod_{i=1}^n \binom{A_i}{B_i}^{-1}\right]   {}_{n+1}F_n\(\begin{matrix}
A_0, &A_1,&\ldots,&A_n  \\
   & B_1,  & \ldots,&B_n\\
\end{matrix};x\)_q. $$

\end{prop}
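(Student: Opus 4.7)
The proof is a direct term-by-term comparison of the two summands. The key ingredient is the identity
\[
\binom{A}{B} = \frac{B(-1)}{q}\cdot\frac{g(A)\,g(\overline{B})}{g(A\overline{B})},
\]
valid whenever $A\overline{B}\neq\varepsilon$; this comes from the defining relation $\binom{A}{B} = \frac{B(-1)}{q}J(A,\overline{B})$ together with the Jacobi--Gauss identity $J(\chi_1,\chi_2) = g(\chi_1)g(\chi_2)/g(\chi_1\chi_2)$, which remains valid uniformly for every character in the Greene sum under the standard convention $g(\varepsilon)=-1$. The hypotheses $A_0\neq\varepsilon$ and $A_i\neq B_i$ enter precisely to ensure that the ``combined'' characters $(A_0\chi)\overline{\chi}=A_0$ and $(A_i\chi)\overline{B_i\chi}=A_i\overline{B_i}$ are nontrivial, so that each of the $n+1$ denominators $g(A_0)$ and $g(A_i\overline{B_i})$ produced by the identity is a $\chi$-independent, nonzero constant that can be pulled outside the sum.

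\textbf{Key steps.} First I would substitute the identity into each of the $n+1$ Greene binomials,
\[
\binom{A_0\chi}{\chi} = \frac{\chi(-1)}{q}\cdot\frac{g(A_0\chi)\,g(\overline{\chi})}{g(A_0)}, \qquad \binom{A_i\chi}{B_i\chi} = \frac{B_i(-1)\chi(-1)}{q}\cdot\frac{g(A_i\chi)\,g(\overline{B_i\chi})}{g(A_i\overline{B_i})},
\]
and multiply them together. The $\chi$-dependent Gauss-sum numerators assemble into exactly
\[
g(\overline{\chi})\prod_{i=0}^{n}g(A_i\chi)\prod_{i=1}^{n}g(\overline{B_i\chi}),
\]
the product appearing in McCarthy's summand, while the sign factors collect into $\chi(-1)^{n+1}\prod_{i=1}^n B_i(-1)$, in which the $\chi(-1)^{n+1}$ matches McCarthy's explicit sign. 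The second step is to multiply and divide by $\prod_{i=0}^n g(A_i)\prod_{i=1}^n g(\overline{B_i})$ so that the $\chi$-dependent factors take the normalized form $\prod_i g(A_i\chi)/g(A_i)$ and $\prod_j g(\overline{B_j\chi})/g(\overline{B_j})$ used in McCarthy's definition. The third step is to balance the two outer prefactors $\frac{q}{q-1}$ (Greene) versus $\frac{1}{q-1}$ (McCarthy): the extra factor of $q$ cancels one power from the $q^{n+1}$ arising from the $n+1$ binomials. What remains is a single $\chi$-independent constant multiplying McCarthy's summand, namely
\[
\prod_{i=1}^{n}\frac{B_i(-1)}{q}\cdot\frac{g(A_i)\,g(\overline{B_i})}{g(A_i\overline{B_i})} = \prod_{i=1}^{n}\frac{B_i(-1)}{q}J(A_i,\overline{B_i}) = \prod_{i=1}^{n}\binom{A_i}{B_i},
\]
so that Greene equals this product times McCarthy, which rearranges to the statement of the proposition.

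\textbf{Main obstacle.} There is no deep obstacle; the argument is pure bookkeeping. The most error-prone pieces are (i) verifying that the total sign $\chi(-1)^{n+1}$ agrees on both sides after collecting contributions from all $n+1$ binomials, (ii) pairing each $B_i(-1)/q$ factor correctly with the corresponding Jacobi sum $J(A_i,\overline{B_i})$ so as to recognize $\binom{A_i}{B_i}$, and (iii) keeping track of the power-of-$q$ accounting between $q^{n+1}$ in the transformed summand and the prefactor $q/(q-1)$ of Greene. A minor sanity check is required at $\chi=\varepsilon$ (and at $\chi=\overline{A_i}$ or $\chi=\overline{B_i}$), where one verifies that the Jacobi--Gauss formula continues to hold under the convention $g(\varepsilon)=-1$; this is guaranteed by the hypotheses $A_0\neq\varepsilon$ and $A_i\neq B_i$.
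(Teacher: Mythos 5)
Your argument is correct, and it is the standard derivation: convert each Greene binomial to a ratio of Gauss sums via $\binom{A}{B}=\frac{B(-1)}{q}\,\frac{g(A)g(\overline B)}{g(A\overline B)}$ (valid exactly when $A\neq B$, which the hypotheses $A_0\neq\varepsilon$ and $A_i\neq B_i$ guarantee for every $\chi$ in the sum), then collect the $\chi$-independent constants into $\prod_{i=1}^n\binom{A_i}{B_i}$ and absorb the leftover power of $q$ into the prefactor. The paper itself states this proposition as a quoted result of McCarthy and gives no proof, so there is nothing to contrast with; your bookkeeping reproduces McCarthy's original argument.
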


McCarthy uses this hypergeometric function to provide analogues to classical formulas of Dixon, Kummer, and Whipple for well-poised classical hypergeometric series \cite{McCarthy}. For example, consider Whipple's classical transformation below:

\begin{theorem}[Whipple \cite{Whipple}] If one of $1+\frac12 a-b$, $c$, $d$, $e$ is a negative integer, then
\begin{align*}
& \hspace*{-0.5in}_5F_{4} \left[ \begin{matrix}
a & b& c & d & e \\
     & 1+a-b&1+a-c&1+a-d & 1+a-e\,\, \\
\end{matrix}
; 1
\right]    \\
&= \frac{\Gamma(1+a-c)\Gamma(1+a-d)\Gamma(1+a-e)\Gamma(1+a-c-d-e)}{\Gamma(1+a)\Gamma(1+a-d-e)\Gamma(1+a-c-d)\Gamma(1+a-c-e)}\,   \\
& \hspace*{1.0in} \cdot \,_4F_{3}\left[ \begin{matrix}
1+\frac12 a-b & c& d &e   \\
     & 1+\frac12 a & c+d+e-a & 1+a-b\,\, \\
\end{matrix}
; 1
\right]. \end{align*}
\end{theorem}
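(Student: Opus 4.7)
The plan is to prove Whipple's ${}_5F_4$ transformation by exploiting two structural features of the identity: the well-poised nature of the series on the left (each numerator parameter $x$ pairs with a denominator $1+a-x$, and the two sum to $1+a$) and the Saalsch\"utzian balance of the series on the right (the sum of denominator parameters exceeds that of the numerator parameters by exactly $1$). Both symmetries must emerge from the argument in complementary ways.

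First I would use the generalized Euler integral representation \eqref{eq:gen-Euler-int} twice to peel off two parameter pairs, say $(e,1+a-e)$ and $(d,1+a-d)$, expressing the left-hand side as an iterated Beta integral over $[0,1]^2$ of a well-poised ${}_3F_2$ with parameters $a,b,c$ and argument $xy$. The two Beta integrals will contribute an initial piece of the Gamma prefactor on the right-hand side.

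Next I would apply a classical identity to the inner well-poised ${}_3F_2$---for example one of Thomae's transformations, or equivalently a combination of Pfaff--Euler transformations on its constituent ${}_2F_1$---in order to introduce the half-parameter $1+\tfrac12 a$. This step is the heart of the argument: the rising-factorial identity $(a)_{2k}=2^{2k}(\tfrac12 a)_k(\tfrac12(a+1))_k$, a consequence of the Gauss duplication formula, is what converts the well-poised symmetry of the left-hand side into the denominator parameter $1+\tfrac12 a$ on the right. Interchanging summation and integration, evaluating the remaining Beta integrals, and simplifying with the reflection and duplication formulas for $\Gamma$ should then produce both the stated prefactor and the Saalsch\"utzian ${}_4F_3(1)$ on the right.

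The main obstacle will be the Gamma-function bookkeeping: one must verify that the many factors produced by the iterated Beta integrals and the duplication identity collapse precisely to the eight Gamma factors of the stated prefactor, with the four ``surplus'' factors $\Gamma(1+a-c-d-e)/\Gamma(1+a-c-d)\Gamma(1+a-c-e)\Gamma(1+a-d-e)^{-1}$ arising only after a nontrivial cancellation. A secondary concern is analytic regularity: in the terminating case assumed in the hypothesis (one of $1+\tfrac12 a-b,c,d,e$ is a negative integer) both sides are rational in the remaining parameters, so after establishing the identity in the convergence region $\mathrm{Re}(1+\tfrac12 a-b-c-d-e)>0$ it extends by analytic continuation to the relevant locus.
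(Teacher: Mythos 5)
The paper offers no proof of this statement: it is quoted verbatim as a classical result of Whipple (with a pointer to \cite{Whipple}), so there is no in-paper argument to measure yours against, and your sketch must stand on its own. Judged that way, it has a genuine gap at its central step. After peeling off the pairs $(e,1+a-e)$ and $(d,1+a-d)$ with \eqref{eq:gen-Euler-int}, the inner object is a well-poised ${}_3F_2$ at the \emph{variable} argument $t=xy$, and none of the identities you invoke applies to it: Thomae's relations transform ${}_3F_2$'s only at argument $1$, and the Pfaff--Euler transformations act on ${}_2F_1$'s, not on a ${}_3F_2$. The classical tool that does introduce half-parameters into a well-poised ${}_3F_2(t)$ is the Whipple--Bailey quadratic transformation ${}_3F_2[a,b,c;1+a-b,1+a-c;t]=(1-t)^{-a}\,{}_3F_2[\tfrac a2,\tfrac{a+1}2,1+a-b-c;1+a-b,1+a-c;-4t/(1-t)^2]$, but its argument $-4t/(1-t)^2$ destroys the Beta-integral structure of the outer integrals, so the ``evaluate the remaining Beta integrals'' step is unavailable. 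Relatedly, the Gamma ``bookkeeping'' you defer is a symptom, not a nuisance: the Euler prefactors contribute $\Gamma(1+a-2d)$ and $\Gamma(1+a-2e)$, which occur nowhere in the target, while the target's factors $\Gamma(1+a-d-e)$, $\Gamma(1+a-c-d)$, $\Gamma(1+a-c-e)$, $\Gamma(1+a-c-d-e)$ cannot arise from integrals of the form $\int_0^1 x^{d-1}(1-x)^{a-2d}(\cdots)\,dx$. That mismatch indicates the decomposition itself is not the right one.

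There are also difficulties with the hypotheses. The integral representation requires $\mathrm{Re}(1+a-e)>\mathrm{Re}(e)>0$, which fails exactly when $e$ (or $d$) is the terminating parameter; and the concluding appeal to analytic continuation is not automatic, since the termination of one of $1+\tfrac12a-b$, $c$, $d$, $e$ is a substantive hypothesis of the theorem --- nonterminating analogues of well-poised-to-Saalsch\"utzian transformations typically acquire additional terms (compare the nonterminating Pfaff--Saalsch\"utz identity), so one cannot simply prove an identity in the convergence region and specialize. The standard proof (Whipple 1926; Bailey, \S 4.3; Andrews--Askey--Roy, \S 3.4) is instead a double-series rearrangement: expand the Saalsch\"utzian ${}_4F_3$, insert the Pfaff--Saalsch\"utz evaluation of a terminating ${}_3F_2(1)$ (equivalently, run Bailey's transform), and regroup. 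The duplication identity $(a)_{2k}=4^k(\tfrac a2)_k(\tfrac{a+1}2)_k$, which you correctly identify as the source of the parameter $1+\tfrac12 a$, enters there, inside the sum, rather than through an integral representation. If you want a completable proof, start from that rearrangement.
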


\noindent McCarthy provides a finite field analogue to this result using his hypergeometric series.

\begin{theorem}[McCarthy, Thm. 1.6 of \cite{McCarthy}]\label{McCarthy5F4} Let $A,B,C,D,E\in\fqhat$ such that, when $A$ is a square, $A\neq\varepsilon$, $B\neq\varepsilon$, $B^2\neq A$, $CD\neq A$, $CE\neq A$, $DE\neq A$, and $CDE\neq A$.  Then, if $A$ is not a square,
\[
{}_{5}F_4\(\begin{matrix}
A, &B,& C,& D,& E  \\
 & A\overline{B }, & A\overline{C},& A\overline{D} & A\overline{E} \\
\end{matrix};1\)_q^\ast 
= 0,
\]
and if $A$ is a square,
\begin{multline*}
 {}_{5}F_4\(\begin{matrix}
A, &B,& C,& D,& E  \\
 & A\overline{B }, & A\overline{C},& A\overline{D} & A\overline{E} \\
\end{matrix};1\)_q^\ast = \\
\displaystyle \frac{g(\overline{A})g(\overline{A}DE)g(\overline{A}CD)g(\overline{A}CE)}{g(\overline{A}C)g(\overline{A}D)g(\overline{A}E)g(\overline{A}CDE)} \displaystyle \sum_{R^2=A} {}_{4}F_3\(\begin{matrix}
R\overline{B}, &C,& D, &E  \\
 & R & \overline{A}CDE,& A\overline{B} \\
\end{matrix};1\)_q^\ast  \\
+\displaystyle\frac{g(\overline{A}DE)g(\overline{A}CD)g(\overline{A}CE)q}{g(C)g(D)g(E)g(\overline{A}C)g(\overline{A}D)g(\overline{A}E)} \, {}_{2}F_1\(\begin{matrix}A, &B \\ & A\overline{B} \\ \end{matrix};-1\)_q^\ast .
\end{multline*}
\end{theorem}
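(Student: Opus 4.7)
The plan is to mimic, at the level of Gauss sums, the classical proof of Whipple's ${}_5F_4$ transformation. Classically, one substitutes an Euler-type integral (the analogue of \eqref{eq:gen-Euler-int}) for one of the parameters on the right-hand side ${}_4F_3$, interchanges the order of summation and integration, and identifies the resulting inner series as a well-poised ${}_5F_4$. In the finite field setting, the analogue of Euler's integral is Greene's Theorem \ref{Greene-3.13}, with Jacobi sums playing the role of Beta integrals, and the Hasse-Davenport product relation playing the role of the Beta reflection identity used to collapse well-poised pairs.

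First, I would expand the ${}_5F_4^{\ast}$ on the left using its definition as a sum over $\chi \in \fqhat$ of ratios of Gauss sums. The well-poised structure pairs each upper parameter $X \in \{B, C, D, E\}$ with the matching lower $A\overline{X}$, so each paired Gauss sum factor takes the form $g(X\chi)\, g(\overline{A}\,X\,\overline{\chi})$. Using the reflection relation $g(\psi)g(\overline{\psi}) = \psi(-1) q$ for $\psi \neq \varepsilon$, each such pair can be rewritten to introduce a common factor in which the combination $A\chi^2$ appears as the argument of a single Gauss sum. The natural next tool is the Hasse-Davenport product relation, which collapses the eight well-poised Gauss sums into four, expressible in terms of $g$'s evaluated at characters that are squares.

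At this step the dichotomy on whether $A$ is a square is forced: a square root $R^2 = A$ in $\fqhat$ is needed to re-linearize the argument, and when $A$ is a non-square, the residual character sum cancels against its twist by $\eta_2$ and therefore vanishes, yielding the first assertion of the theorem. When $A$ is a square, exactly two square roots $R$ exist in $\fqhat$, and, after invoking Theorem \ref{Greene-3.13} to recognize the inner character sum as a transformed Gaussian hypergeometric series, this produces the $\sum_{R^2 = A} {}_4F_3^{\ast}$ term in the statement. The exceptional characters $\chi$ at which the Hasse-Davenport reduction picks up a boundary contribution (those where a Gauss sum in the denominator of the rewriting fails to be invertible) produce, after simplification via the finite field analogue of Kummer's ${}_2F_1(-1)$ evaluation available in \cite{McCarthy}, the additional ${}_2F_1^{\ast}(A,B;A\overline{B};-1)_q^{\ast}$ term. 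The hypotheses $A \neq \varepsilon$ when $A$ is a square, $B \neq \varepsilon$, $B^2 \neq A$, $CD \neq A$, $CE \neq A$, $DE \neq A$, and $CDE \neq A$ are precisely those ensuring no other denominators vanish along the way, and $A$ being a square is needed for Hasse-Davenport at $N=2$ to apply without obstruction.

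The main obstacle is the careful bookkeeping of Gauss sum factors, signs $\chi(-1)$, and $\overline{\chi}(4)$-type corrections generated by each application of Hasse-Davenport to the four well-poised pairs. Arriving at the stated prefactor
\[
\frac{g(\overline{A})\, g(\overline{A}DE)\, g(\overline{A}CD)\, g(\overline{A}CE)}{g(\overline{A}C)\, g(\overline{A}D)\, g(\overline{A}E)\, g(\overline{A}CDE)}
\]
in front of the $\sum_{R^2 = A} {}_4F_3^{\ast}$ term requires these to cancel cleanly, as does verifying that the exceptional contribution reorganizes to exactly a single ${}_2F_1^{\ast}$ summand rather than a messier correction. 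A subsidiary difficulty is treating the $\chi = \varepsilon$ term in the definition of ${}_{n+1}F_n^{\ast}$ consistently, since the factor $g(\overline{\chi})$ becomes singular there and must be handled via a limit or by an auxiliary regularization.
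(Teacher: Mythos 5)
First, a point of comparison that matters more than any detail of your argument: the paper does not prove this statement. It is Theorem~1.6 of McCarthy \cite{McCarthy}, imported verbatim and used as a black box in the proof of Proposition \ref{prop: 5F4}. There is therefore no in-paper proof to measure your proposal against; the only meaningful benchmark is McCarthy's original argument, whose general flavor (expand ${}_{5}F_4^{\ast}$ as a character sum of Gauss-sum ratios, exploit the well-poised pairing through the Hasse--Davenport product relation, and split on whether $A$ is a square) your outline does echo.

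Judged as a proof, however, the proposal has a genuine gap: every step that carries the content of the theorem is named rather than performed, and the one mechanism you describe concretely is misapplied. For the well-poised pair $X$ over $A\overline{X}$, the definition of ${}_{n+1}F_n^{\ast}$ produces the numerator factor $g(X\chi)\,g(\overline{A}X\overline{\chi})$, and the two characters $X\chi$ and $\overline{A}X\overline{\chi}$ are conjugate only when $X^2=A$; in general their product is $\overline{A}X^2$, which is independent of $\chi$, so neither the reflection formula $g(\psi)g(\overline{\psi})=\psi(-1)q$ nor the degree-two Hasse--Davenport relation applies to that pair as written, and no Gauss sum with argument $A\chi^2$ materializes from it. To bring $A\chi^2$ (and hence the square-root dichotomy and the $\sum_{R^2=A}$ reindexing) into play one must first invert the lower-parameter Gauss sums by reflection, which converts products into ratios and introduces $\chi$-dependent signs and powers of $q$ that have to be tracked through all four pairs at once, with separate treatment of the characters $\chi$ at which a denominator Gauss sum degenerates. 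You explicitly defer exactly these computations --- the cancellation down to the stated prefactor, the reorganization of the exceptional terms into ${}_{2}F_1^{\ast}(A,B;A\overline{B};-1)_q^{\ast}$, and the verification that the hypotheses $A\neq\varepsilon$, $B\neq\varepsilon$, $B^2\neq A$, $CD\neq A$, $CE\neq A$, $DE\neq A$, $CDE\neq A$ are what rule out further boundary contributions --- by labeling them obstacles. But those computations are the theorem; without them what you have is a plausible plan, not a proof.
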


Gaussian hypergeometric functions have been used to count points on different types of varieties over $\fq$ and they are related to coefficients of various modular forms \cite{Koike,Ono,FOP,Fuselier,Lennon-count,Ve11,Barman, Ahlgren-Ono}.  We use Greene's hypergeometric functions to count points on $C_{n,\lambda}$ in Section 4.1. We make use of McCarthy's hypergeometric function, as well as the previous theorem, Theorem \ref{McCarthy5F4}, in the proof of Theorem \ref{GaussianTheorem} in Section \ref{Gauss}. Values of McCarthy's normalized version of the hypergeometric function over finite fields have also been shown to be related to eigenvalues of Siegel modular forms of higher degree \cite{MP}.

\subsection{$p$-adic Gamma functions and the Gross-Koblitz formula}\label{GKSec}

We first recall the $p$-adic $\Gamma$-function. The $p$-adic $\Gamma$-function is defined for $n\in\mathbb{N}$ by
$$ \Gamma_p(n) := (-1)^{n}\prod_{\substack{0<  j < n \\ p \, \nmid j}}j,$$
and extends to $x\in\mathbb{Z}_p$ by defining $\Gamma_p(0):=1$, and for $x\neq 0$,
$$\Gamma_p(x):=\lim_{n\rightarrow x}\Gamma_p(n),$$
where $n$ runs through any sequence of positive integers $p$-adically approaching $x$.

We recall some basic properties for $\G_p(\cdot)$ which will be useful later (see Theorem 14 of \cite{LR}). 
\begin{prop}\label{GpFacts}
Let $x\in \Z_p$.  We have the following facts about $\G_p$.
\begin{itemize}
\item[a)] $\G_p(0)=1$
\item[b)] $\G_p(x+1)/\G_p(x)=-x$ unless $x\in p\Z_p$ in which case the quotient takes value $-1$.
\item[c)] $\G_p(x)\G_p(1-x)=(-1)^{a_0(x)}$ where $a_0(x)$ is the least positive residue of $x$ modulo $p$,
\item[d)] Given $p>11$, there exist $G_1(x), G_2(x)\in \Z_p$ such that for any $m\in \Z_p$,
\begin{equation*}
\G_p(x+mp)\equiv \G_p(x)\left[1+G_1(x)mp+G_2(x)\frac{(mp)^2}2+G_3(x)\frac{(mp)^3}6\right] \pmod{p^4}.
\end{equation*}
\item[e)] $G_1(x)=G_1(1-x)$ and $G_2(x)+G_2(1-x)=2G_1(x)^2$,
\item[f)] If $x \equiv y \pmod{p^n}$, then $\Gamma_p(x) \equiv \Gamma_p(y) \pmod{p^n}$.
\end{itemize}
\end{prop}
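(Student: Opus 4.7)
The plan is to establish (a), (b), (f) by direct manipulation of the definition, derive (c) from (b) and a bookkeeping argument on ranges of integers, and then use the local $p$-adic analyticity of $\G_p$ to extract (d), from which (e) will follow by Taylor-expanding the reflection formula.

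I would start with (a), which is immediate from the empty product convention, and (b) by a one-line calculation on positive integers $n$: the ratio $\G_p(n+1)/\G_p(n)$ equals $-n$ when $p\nmid n$ (the new factor $n$ appears in the product) and $-1$ when $p\mid n$ (the new factor is omitted, leaving only the sign change $(-1)^{n+1}/(-1)^n$). For (f), a Wilson-type counting argument shows that if positive integers $n\le n'$ satisfy $n\equiv n'\pmod{p^k}$, then the product of integers in $(n,n']$ coprime to $p$ is $\equiv 1\pmod{p^k}$, so $\G_p(n)\equiv \G_p(n')\pmod{p^k}$. This continuity then extends (a) and (b) from positive integers to all of $\Z_p$.

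For (c), I would first verify the identity on integers $x\in\{1,\ldots,p-1\}$, where $a_0(x)=x$ and $\G_p(x)=(-1)^x(x-1)!$. Iterating (b) to compute $\G_p(p-x+1)$ from $\G_p(1-x)$---the only index in the range $\{1-x,2-x,\ldots,p-x\}$ divisible by $p$ is $0=1-x+(x-1)$, which contributes a factor of $-1$ rather than its negative---yields $\G_p(p-x+1)=(-1)^{p-x+1}(x-1)!(p-x)!\,\G_p(1-x)$. Combining with the direct evaluation $\G_p(p-x+1)=(-1)^{p-x+1}(p-x)!$ gives $\G_p(1-x)=1/(x-1)!$, whence $\G_p(x)\G_p(1-x)=(-1)^x$ exactly. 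The identity then extends to all of $\Z_p$ by taking a sequence of integers $x_i\to x$ along which $a_0(x_i)$ stabilizes and invoking (f).

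For (d), the key input is that $\G_p$ is locally analytic on $\Z_p$ in the sense of Morita's construction, so on each residue disk $x+p\Z_p$ it admits a convergent expansion $\G_p(x+y)=\G_p(x)\bigl(1+G_1(x)y+G_2(x)y^2/2+G_3(x)y^3/6+\cdots\bigr)$ with coefficients $G_i(x)\in\Z_p$; the hypothesis $p>11$ is used to absorb the denominators $i!$ for $i\le 3$ and to ensure the higher-order tail is $O(y^4)$ in the $p$-adic sense. Specializing $y=mp$ and truncating yields (d). For (e), I would substitute this expansion into (c) evaluated at $x+mp$ (noting that $(1-x)+(-m)p=1-x-mp$): since $a_0(x+mp)$ is constant in $m\in\Z_p$, the product $\G_p(x+mp)\G_p(1-x-mp)$ is also constant in $m$, so equating the coefficients of $(mp)^1$ and $(mp)^2$ on the left-hand side to zero produces $G_1(x)=G_1(1-x)$ and $G_2(x)+G_2(1-x)=2G_1(x)G_1(1-x)=2G_1(x)^2$. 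The hard part will be justifying the analyticity and $\Z_p$-integrality of the coefficients in (d); this is not formal from the product definition and ultimately rests on Morita's overconvergent interpretation of $\G_p$, after which the remaining steps are routine power-series algebra and Wilson-type arithmetic.
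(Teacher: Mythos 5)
The paper itself offers no proof of this proposition: it is imported wholesale from Theorem 14 of \cite{LR} and used as a black box, so there is no in-paper argument to compare yours against. Judged on its own, your proposal is the standard derivation and is correct where it is carried out. Parts a), b), f), the verification of c) on $\{1,\dots,p-1\}$ followed by $p$-adic continuity, and the deduction of e) by expanding the reflection formula at $x+mp$ and $1-x-mp$ and equating the coefficients of $mp$ and $(mp)^2$ are all sound. Two small repairs are needed: in f), the product of the integers in $(n,n']$ prime to $p$ is $\equiv(-1)^t\pmod{p^k}$ with $t=(n'-n)/p^k$ by the generalized Wilson theorem, not $\equiv 1$; your conclusion survives only because this sign cancels against the prefactor $(-1)^{n'-n}=(-1)^{tp^k}=(-1)^t$, so the cancellation should be made explicit. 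In c), your integer check covers only $x\not\equiv 0\pmod p$; the class $x\equiv 0\pmod p$ (where $a_0(x)=p$) needs the separate one-line verification $\G_p(0)\G_p(1)=-1=(-1)^p$ before invoking continuity.

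The one genuine gap is d). You correctly identify that everything rests on the local analyticity of $\G_p$ on residue disks, with $\Z_p$-integral coefficients $G_k(x)$ and a tail that vanishes modulo $p^4$ after substituting $y=mp$, but you do not prove any of this; it is precisely the nontrivial content (one must establish that the higher derivatives $\G_p^{(k)}(x)$ lie in $\Z_p$ and that the terms of degree $\ge 4$ contribute nothing modulo $p^4$, which is where the hypothesis $p>11$ is used), and it is exactly the point for which the paper defers to \cite{LR}. So your write-up is no less self-contained than the paper for d), and strictly more self-contained for the remaining parts, but it should not be represented as a complete proof of d) without either reproducing the derivative estimates from \cite{LR} or citing them.
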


\noindent We note that c) above implies in particular that for any integer $n>1$ and prime $p\equiv 1\pmod n$, $$\G_p\left(\frac 1n\right)\G_p\left(1-\frac 1n\right)=(-1)^{\frac{1+(n-1)p}n}.$$ Thus when $n$ is odd, $\G_p(\frac 1n)\G_p(1-\frac 1n)=-1$.

We now recall the Gross-Koblitz formula \cite{GK, GK-revisited} in the case of $\F_p$.  Let $$\varphi: \F_p^\times \rightarrow \Z_p^\times$$ be the Teichm\"uller character such that $\varphi(x)\equiv x \pmod p$. The Gross-Koblitz formula states that the Gauss sum $g(\varphi^{-j})$ defined using the Dwork exponential as the additive character satisfies
\begin{equation}\label{eq:G-K}
g\(\varphi^{-j}\)=-\pi_p^j\G_p\(\frac{j}{p-1}\),
\end{equation}
where $0\le j\le p-2,$ and $\pi_p \in \C_p$ is a root of $x^{p-1}+p=0$.

\section{In the setting of Legendre curves}\label{Leg}
We first briefly explain the relationships between Gaussian hypergeometric functions, hypergeometric series, and truncated hypergeometric series using the Legendre curve
$$C_{2,\l}: y^2=x_1(1-x_1)(x_1-\l),$$
which is isomorphic to the more familiar form $y^2=x(x-1)(x-\l)$ over $\Q(\sqrt{-1})$. It is well-known that one period of $C_{2,\l}$ is given by
$$
\pi \cdot \pFq{2}{1}{\frac 12,\frac 12}{,1}{\l}.
$$

Assume $\l\in \Q$ and $\eta_2\in \widehat{\F_p^\times}$ is  of order $2$. It follows from the Taniyama-Shimura-Wiles theorem \cite{Wiles} that for good primes $p$,
$$a_p(\l)=p+1-\#C_{2,\lambda}(\fp)= -\!\!\!\! \sum_{x_1\in \F_p} \eta_2(x_1(1-x_1)(x_1-\l )) = -p \cdot \hgp{\eta_2,}{\eta_2}{\varepsilon}{\l}$$
is the $p$th coefficient of a weight 2 cuspidal Hecke eigenform that can be computed from a compatible family of 2-dimensional $\ell$-adic Galois representations of $G_\Q:=\Gal(\overline \Q/\Q)$ constructed from the Tate module of $C_{2,\l}$ via L-series. This gives a correspondence between the $_2F_1$ Gaussian hypergeometric functions and the Galois representations arising from $C_{2,\l}$.

When
$a_p(\l)$ is not divisible by $p$, i.e. $p$ is ordinary for $C_{2,\l}$, then a result of Dwork \cite{Dwork-cycles} says that
\begin{equation}\label{eq: 2F1-1/2}
 \displaystyle \lim_{s\rightarrow \infty}\pFq{2}{1}{\frac 12&\frac 12}{&1}{\hat \l}_{p^s-1} \Big/ \pFq{2}{1}{\frac 12&\frac 12}{&1}{\hat \l}_{p^{s-1}-1}
\end{equation}
is the unit root of $T^2-a_p( \l)T+p$, where $\hat \l=\varphi(\l)$ is the image of $\l$ under the Teichm\"uller character.

Since $\l=1$ is a singular case, we study the case when $\l=-1$, for which the corresponding Legendre curve admits complex multiplication. Let $p\equiv1 \pmod 4$ be prime,  then by \cite[(4.11)]{Greene},
$$p\cdot\, \hgp{\eta_2,}{\eta_2}{\varepsilon}{-1}=J(\eta_4,\eta_2)+J(\overline{\eta_4},\eta_2),$$
where $\eta_4$ is a primitive order 4 character of $\F_p^\times$.\footnote{When $p\equiv3 \pmod 4$,  $\hgp{\eta_2,}{\eta_2}{\varepsilon}{-1}=0$.} In the perspective of Dwork, the value \eqref{eq: 2F1-1/2} agrees with the unit root of $T^2 +(J(\eta_4,\eta_2)+J(\overline{\eta_4},\eta_2))T+p$, which is $\frac{\G_p\(\frac12\)\G_p\(\frac14\)}{\G_p\(\frac34\)}$ by the Gross-Koblitz formula. Using Lemma \ref{GaussToHyper}, we have the following.

\begin{prop}
For each  prime $p\equiv 1\pmod{4}$
\[
\pFq{2}{1}{\frac 12,\frac 12}{,1}{-1}_{\frac{p-1}2}  \equiv \frac{\G_p\(\frac12\)\G_p\(\frac14\)}{\G_p\(\frac34\)}=-\frac{\G_p\(\frac14\)}{\G_p\(\frac12\)\G_p\(\frac34\)}
\pmod{p}.
\]
\end{prop}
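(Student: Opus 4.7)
The plan is to obtain this congruence as a direct specialization of Lemma \ref{GaussToHyper}, then convert the resulting Gaussian hypergeometric value via the Jacobi sum identity recalled just above the proposition (from \cite[(4.11)]{Greene}), and finally evaluate those Jacobi sums $p$-adically by means of the Gross--Koblitz formula \eqref{eq:G-K}.

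First I would apply Lemma \ref{GaussToHyper} with parameters $n=2$, $j=1$, $r=2$, and $x=-1$. Because $j = n-j = 1$, the two exponents $(p-1)(n-j)/n$ and $(p-1)j/n$ both equal $(p-1)/2$, so the boundary correction
\[
(-1)^{r+1+jr(p-1)/n}\bigl(x^{(p-1)(n-j)/n}-x^{(p-1)j/n}\bigr)
\]
vanishes identically. What survives is
\[
p\cdot \hgp{\eta_2,}{\eta_2}{\varepsilon}{-1} \equiv -\pFq{2}{1}{\frac 12,\frac 12}{,1}{-1}_{\frac{p-1}2} \pmod{p}.
\]
Combining with the cited identity $p\cdot \hgp{\eta_2,}{\eta_2}{\varepsilon}{-1}=J(\eta_4,\eta_2)+J(\overline{\eta_4},\eta_2)$ reduces the problem to computing these two Jacobi sums modulo $p$.

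Next I would normalize $\eta_4=\varphi^{-(p-1)/4}$, so that $\eta_2=\varphi^{-(p-1)/2}$ and $\overline{\eta_4}=\varphi^{-3(p-1)/4}$. Using $J(A,B)=g(A)g(B)/g(AB)$ together with \eqref{eq:G-K}, the total $\pi_p$-exponent in $J(\eta_4,\eta_2)$ is $(p-1)/4+(p-1)/2-3(p-1)/4=0$, yielding
\[
J(\eta_4,\eta_2) = -\frac{\G_p(\tfrac14)\G_p(\tfrac12)}{\G_p(\tfrac34)}.
\]
In $J(\overline{\eta_4},\eta_2)$ the $\pi_p$-exponents combine to $p-1$, and using $\pi_p^{p-1}=-p$ we obtain
\[
J(\overline{\eta_4},\eta_2) = p\cdot \frac{\G_p(\tfrac34)\G_p(\tfrac12)}{\G_p(\tfrac14)} \in p\Z_p,
\]
which vanishes modulo $p$. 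Substituting back produces the first equality
\[
\pFq{2}{1}{\frac 12,\frac 12}{,1}{-1}_{\frac{p-1}2}\equiv \frac{\G_p(\tfrac 12)\G_p(\tfrac 14)}{\G_p(\tfrac 34)} \pmod{p}.
\]

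The second equality is a purely algebraic rewriting. Since $p\equiv 1\pmod 4$, the least positive residue $a_0(1/2)=(p+1)/2$ is odd, so Proposition \ref{GpFacts}(c) applied at $x=1/2$ gives $\G_p(\tfrac 12)^2=-1$. Multiplying the numerator and denominator of $\G_p(\tfrac 12)\G_p(\tfrac 14)/\G_p(\tfrac 34)$ by $\G_p(\tfrac 12)$ and substituting then produces the claimed second form. The whole argument is essentially mechanical once Lemma \ref{GaussToHyper} is available; the only step requiring care is the bookkeeping of the $\pi_p$-exponents in the two Gross--Koblitz evaluations, which is what distinguishes the $p$-unit contribution from the one that dies modulo $p$.
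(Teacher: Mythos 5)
Your proposal is correct and follows essentially the same route as the paper: Lemma \ref{GaussToHyper} to pass from the truncated series to $-p\cdot{}_2F_1(\eta_2,\eta_2;\varepsilon;-1)_p$, Greene's identity $p\cdot{}_2F_1(\eta_2,\eta_2;\varepsilon;-1)_p=J(\eta_4,\eta_2)+J(\overline{\eta_4},\eta_2)$, and Gross--Koblitz to evaluate the Jacobi sums, with the second summand landing in $p\Z_p$. The only cosmetic difference is that the paper evaluates the combined Gauss-sum quotient $g(\eta_2)\bigl(g(\eta_4)^2+g(\overline{\eta_4})^2\bigr)/\bigl(g(\eta_4)g(\overline{\eta_4})\bigr)$ in one stroke, whereas you track the $\pi_p$-exponents of the two Jacobi sums separately; the bookkeeping and the conclusion are identical.
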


\begin{proof}

By Lemma \ref{AltTrunc} and Lemma \ref{GaussToHyper},  we obtain that

$$
  \pFq{2}{1}{\frac 12,\frac 12}{,1}{-1}_{\frac{p-1}2} \equiv \pFq{2}{1}{\frac 12,\frac 12}{,1}{-1}_{{p-1}}  \equiv -p\cdot \, \hgp{\eta_2,}{\eta_2}{\varepsilon}{-1} \pmod p.
$$
It remains to show
\begin{equation}\label{eq:3.1}
  p\cdot \, \hgp{\eta_2,}{\eta_2}{\varepsilon}{-1}  \equiv -\frac{\G_p\(\frac12\)\G_p\(\frac14\)}{\G_p\(\frac34\)} \pmod p.
\end{equation}
By the relations
$$
  p\cdot \, \hgp{\eta_2,}{\eta_2}{\varepsilon}{-1}  =J(\eta_4,\eta_2)+J(\overline{\eta_4},\eta_2)=\frac{g(\eta_2)\(g(\eta_4)^2+g(\overline{\eta_4})^2\)}{g(\overline{\eta_4})g(\eta_4)},
$$
using the Gross-Koblitz formula, we see that
\begin{align*}
  p\cdot \, \hgp{\eta_2,}{\eta_2}{\varepsilon}{-1} &= \frac{-\pi_p^{\frac{p-1}2}\G_p\(\frac12\)(\pi_p^{3\frac{p-1}2}\G_p\(\frac34\)^2+\pi_p^{\frac{p-1}2}\G_p\(\frac14\)^2)}
  {\pi_p^{{p-1}}\G_p\(\frac14\)\G_p\(\frac34\)}\\
   &=- \frac{\G_p\(\frac12\)(-p\G_p\(\frac34\)^2+\G_p\(\frac14\)^2)}{\G_p\(\frac14\)\G_p\(\frac34\)},
\end{align*}
which yields the result.
\end{proof}

Using a different technique via hypergeometric evaluation identities, one can prove the following
stronger result.  We will also outline this strategy here (for details, see \cite{LR}). First we deform the truncated hypergeometric series $p$-adically so that it becomes a  whole family of terminating series that can be written as a quotient of Pochhammer symbols $(a)_k$ via appropriate  hypergeometric evaluation formulas.  We further rewrite the quotient of Pochhammer symbols as a quotient of $p$-adic Gamma values using the functional equation of $p$-adic Gamma functions.  Then we use harmonic sums to analyze the terminating series on one side and use the Taylor expansion of $p$-adic Gamma functions on the other side. Now picking suitable members in the deformed family, we compare both sides to get a linear system which allows us to conclude the desired congruence.

 \begin{prop}\label{prop:9}
For any prime $p\equiv 1 \pmod 4$,
\[ \pFq{2}{1}{\frac 12,\frac12}{,1}{-1}_{\frac{p-1}2}\equiv  -\frac{\G_p(\frac 14)}{\G_p(\frac 12)\G_p(\frac 34)} \pmod{p^2}.\]
\end{prop}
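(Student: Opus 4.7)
The plan is to apply the deformation method of Long–Ramakrishna \cite{LR} (a refinement of Chan–Long–Zudilin \cite{CLZ}) described in the paragraph preceding the proposition, upgrading the mod $p$ congruence just established to the claimed mod $p^2$ supercongruence.

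First, I would embed the truncated series into a one-parameter $p$-adic analytic family of terminating $_2F_1$ series in a new parameter $t$. A convenient choice is the family whose top row contains the factor $\bigl(\tfrac{1-p}{2}\bigr)_k$ (forcing termination at $k=(p-1)/2$) and whose lower parameter is adjusted so that Kummer's condition $c=1+a-b$ holds identically in $t$. By construction, the value at $t=0$ agrees modulo $p^2$ with the target $\pFq{2}{1}{\frac 12, \frac 12}{, 1}{-1}_{(p-1)/2}$ up to an explicit correction by an alternating harmonic partial sum, which is extractable from the elementary expansion $(x+\alpha p)_k\equiv (x)_k\bigl(1+\alpha p\sum_{j=0}^{k-1}\tfrac 1{x+j}\bigr)\pmod{p^2}$.

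Second, Kummer's identity
\[
\pFq{2}{1}{a, b}{, 1+a-b}{-1}=\frac{\G(1+a-b)\,\G(1+\tfrac a2)}{\G(1+a)\,\G(1+\tfrac a2-b)}
\]
converts every member of the family into a classical $\G$-ratio. I would then pass to $p$-adic $\G_p$-values using Proposition \ref{GpFacts}(b,c) in order to isolate the desired quotient $\G_p(\tfrac14)/(\G_p(\tfrac12)\,\G_p(\tfrac34))$ together with an explicit unit prefactor and a $t$-dependent $\G_p$ correction.

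Third, I would expand both sides to first order in $tp$ modulo $p^2$. The truncated-series side produces a term linear in $tp$ whose coefficient is a harmonic-type sum over $k\leq (p-1)/2$, while the $\G_p$ side produces, via Proposition \ref{GpFacts}(d,e), a linear combination of $G_1(\tfrac14)$, $G_1(\tfrac12)$, $G_1(\tfrac34)$. Specializing the identity at $t=0$ and at one additional value of $t$ chosen so that $\tfrac 12+tp$ lands on a $p$-adic integer where $\G_p$ simplifies yields two linear equations modulo $p^2$ in the target evaluation and the unknown harmonic sum. Solving the resulting $2\times 2$ system isolates the claimed congruence. The hard part will be the harmonic-sum bookkeeping: matching the alternating partial sum that arises on the hypergeometric side against the $G_1$-contributions from the $\G_p$ Taylor expansion reduces to a Wolstenholme-type congruence modulo $p$. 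Once that identity is in hand, the rest of the argument is mechanical manipulation of $\G_p$ via Proposition \ref{GpFacts}, and the creative input lies in selecting a deformation in which termination and Kummer's parameter condition hold simultaneously.
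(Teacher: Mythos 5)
Your overall strategy---deform the truncated series, evaluate the deformed family by Kummer's theorem, convert to $p$-adic Gamma values, expand in the deformation parameter, and solve a linear system---is exactly the paper's strategy, but your specific implementation is underdetermined and will not close. The difficulty is that you freeze one upper parameter at $\frac{1-p}{2}$ and vary only the single parameter $t$, so after reduction modulo $p^2$ both sides are affine in $t$ and you can extract at most two independent linear relations (the $t^0$ and the $t^1$ coefficients). But the left-hand side produces \emph{two} independent nuisance quantities, not one: with upper parameters $\frac12+tp$ and $\frac12-\frac p2$ and lower parameter $1+(t+\frac12)p$, the term-by-term expansion gives
\begin{equation*}
F+\left(t-\tfrac12\right)Ap-\left(t+\tfrac12\right)Bp,\qquad
A=\sum_{k=0}^{\frac{p-1}{2}}\frac{(\frac12)_k^2}{k!^2}(-1)^k\,2H_k^{(odd)},\qquad
B=\sum_{k=0}^{\frac{p-1}{2}}\frac{(\frac12)_k^2}{k!^2}(-1)^k H_k,
\end{equation*}
where $F$ is the target sum, while the Gamma side contributes the further unknowns $G_1(0)$, $G_1(\frac12)$, $G_1(\frac14)$. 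Matching $t^1$-coefficients ties $A-B$ to one combination of $G_1$-values; matching $t^0$-coefficients then expresses $F+K$ (with $K=\G_p(\frac14)/\G_p(\frac12)\G_p(\frac34)$) in terms of $\frac{A+B}{2}$ and a \emph{different} combination of $G_1$-values, and nothing in your two-equation system forces that residual to vanish modulo $p$. The ``Wolstenholme-type congruence'' you defer to is precisely this missing identity between a weighted harmonic sum and $p$-adic digamma-type constants; it is not a standard congruence, and proving it directly is essentially equivalent to the proposition itself. (Also beware that the natural second specialization $t=-\frac12$ degenerates: both $(\frac32+tp)_{(p-1)/2}$ and $(\frac54+\frac{tp}2)_{(p-1)/2}$ acquire a vanishing factor there, so Kummer's ratio must be interpreted as a limit.)

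The paper's proof repairs exactly this by deforming \emph{both} upper parameters, $\frac12+x_1p$ and $\frac12+x_2p$, with the lower parameter $1+(x_1-x_2)p$ dictated by Kummer's condition and with $\frac12+x_2p$ required to be a negative integer, and then specializing at the three non-collinear points $(x_1,x_2)=(\frac12,-\frac12)$, $(-\frac32,-\frac12)$, $(-\frac12,-\frac32)$. Summing the first two and the last two of the resulting congruences cancels $B$ and $G_1(0)$ in each pair, and the two pairwise sums involve $A$ and $G_1(\frac12)-G_1(\frac14)$ with proportional coefficients, so comparing them eliminates every nuisance term and leaves $F\equiv -K\pmod{p^2}$. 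To make your argument work, replace the one-parameter slice by this two-parameter deformation (or else supply an independent proof of the mod $p$ evaluation of $\frac{A+B}{2}$ against the $G_1$-contributions).
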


\begin{remark}
Proposition \ref{prop:9} was first obtained by Coster and van Hamme \cite{Coster-vanHamme} using a refined version of formal group laws.
\end{remark}

\begin{proof}   We first recall a theorem of Kummer (see Corollary 3.1.2 of \cite{AAR}) which says that whenever $b$ is a negative integer,
\begin{equation}\label{Kummer}
\pFq{2}{1}{a,b}{,a-b+1}{-1}=\frac{\G(a-b+1)\G(a/2+1)}{\G(a+1)\G(a/2-b+1)}  =\frac{(a+1)_{-b}}{(a/2+1)_{-b}}.
\end{equation}
We now estimate the left hand side of the proposition statement, modulo $p^2$.  Observe that for any positive integer $1\leq k\leq \frac{p-1}2$, and $x,y\in \Z_p$,
\begin{align*}
\left (\frac 12+xp \right )_k &\equiv \left (\frac 12 \right )_k(1+2xp H_k^{(odd)}) \pmod {p^2}, \\
\left (1+yp \right )_k &\equiv \left (1 \right )_k(1+yp H_k) \pmod {p^2},
\end{align*}
where $\displaystyle H_k^{(odd)}:=\sum_{j=1}^{k}\frac1{2j-1}$ and $ \displaystyle H_k:=\sum_{j=1}^{k}\frac1{j}$ are harmonic sums.  Thus we have for any $x_1, x_2,y \in\Z_p$,
\begin{multline}\label{LeftSide}
\pFq{2}{1}{\frac 12+x_1p & \frac12+x_2p}{& 1+yp}{-1}_{\frac{p-1}2}\equiv 
 \pFq{2}{1}{\frac 12 & \frac12}{&1}{-1}_{\frac{p-1}2} + (x_1+x_2)Ap - yBp \pmod {p^2},
\end{multline}
where $\displaystyle  A=\sum_{k=0}^{\frac{p-1}2} \left ( \frac{(\frac 12)_k^2}{k!^2} \right ) (-1)^k \cdot 2 H_k^{(odd)}$, and  $\displaystyle  B=\sum_{k=0}^{\frac{p-1}2} \left ( \frac{(\frac 12)_k^2}{k!^2} \right )(-1)^k H_k$.

\noindent If  $b=\frac{1}{2} + x_2 p$ is a negative integer, then by  \eqref{Kummer} and the above analysis
\begin{equation}\label{combined}
\pFq{2}{1}{\frac 12,\frac12}{,1}{-1}_{\frac{p-1}2} + (x_1+x_2)Ap - (x_1-x_2)Bp \equiv \frac{\left(\frac 32+x_1p\right)_{-b}}{\left(\frac 54 + \frac{x_1p}{2}\right)_{-b}}  \pmod {p^2}.
\end{equation}

We now estimate the right hand side of the proposition, which can be also written in terms of the Pochhammer symbols.   One can use Lemma 17 of \cite{LR} to convert it to a quotient of $p$-adic Gamma function values. For example, if we let $x_1 = \frac12$,  $x_2=-\frac12$ in \eqref{combined} (thus $b=\frac{1-p}2$), the right hand side becomes
$$
\frac{\left(\frac{3+p}{2} \right)_{\frac{p-1}{2}}}{\left(\frac{5+p}{4}\right)_{\frac{p-1}{2}}}
= -\frac{\G_p(p)\G_p(\frac 14+\frac{p}4)}{\G_p(\frac12+\frac p2)\G_p(\frac 34+\frac {3p}4)}.
$$
By Proposition \ref{GpFacts},
\[
\G_p(\alpha+mp)\equiv \G_p(\alpha)[1+G_1(\alpha)mp] \pmod{p^2},
\]
and $G_1(\alpha)=G_1(1-\alpha)$. Thus,
\begin{multline*}
 -  \frac{\G_p(p)\G_p(\frac 14+\frac{p}4)}{\G_p(\frac12+\frac p2)\G_p(\frac 34+\frac {3p}4)} \equiv 
-\frac{\G_p(\frac 14)}{\G_p(\frac 12)\G_p(\frac 34)} \left[ 1+G_1(0)p-G_1\left(\frac 12\right)\frac{p}{2}-G_1\left(\frac 14 \right) \frac p2 \right] \pmod {p^2}.
\end{multline*}
Equating both sides in \eqref{combined} gives that
\begin{multline*}
\pFq{2}{1}{\frac 12,\frac12}{,1}{-1}_{\frac{p-1}2}  - Bp \equiv \\
-\frac{\G_p(\frac 14)}{\G_p(\frac 12)\G_p(\frac 34)} \left[1+G_1(0)p-G_1\left(\frac 12\right) \frac p2-G_1\left(\frac 14\right) \frac p2 \right] \pmod {p^2}.
\end{multline*}
Similarly, letting $x_1 = -3/2$,  $x_2=-1/2$, we get
\begin{multline*}
\pFq{2}{1}{\frac 12,\frac12}{,1}{-1}_{\frac{p-1}2}  -2A + Bp  \equiv \\
- \frac{\G_p(\frac 14)}{\G_p(\frac 12)\G_p(\frac 34)} \left[1-G_1(0)p+G_1\left(\frac 12\right) \frac {3p}2-G_1\left(\frac 14\right) \frac {p}2 \right] \pmod{p^2}.
\end{multline*}
Letting $x_1 = -1/2$,  $x_2=-3/2$ (here $b=\frac{1-3p}{2}$ but the series terminates at the $\frac{p-1}2$th term as $a=-\frac{p-1}2$),
\begin{multline*}
\pFq{2}{1}{\frac 12,\frac12}{,1}{-1}_{\frac{p-1}2}  -2A - Bp  \equiv \\
-\frac{\G_p(\frac 14)}{\G_p(\frac 12)\G_p(\frac 34)} \left[1+G_1(0)p+G_1\left(\frac 12\right) \frac p2-G_1\left(\frac 14\right) \frac {3p}2 \right] \pmod{p^2}.
\end{multline*}
By summing the first two and last two congruences and comparing, we arrive at the proposition, in light of Lemma \ref{AltTrunc}.
\end{proof}

Based on Lemma \ref{GaussToHyper}, we observe the following numerically which is a companion form of the congruence above.
\begin{conj}
For any prime $p\equiv1 \pmod 4$,
\begin{multline*}
-\frac{\G_p(\frac 14)}{\G_p(\frac 12)\G_p(\frac 34)}\equiv \sum_{k=0}^{p-1}  \(\frac{(\frac12)_k}{k!} \)^2(-1)^k\overset{?}{\equiv} \\
 \sum_{k=0}^{p-1}  \(\frac{k!}{(\frac32)_k} \)^2p^2(-1)^k\equiv \sum_{k=\frac{p-1}2}^{p-1}  \(\frac{k!}{(\frac32)_k} \)^2p^2(-1)^k
 \pmod {p^2}.
\end{multline*}
\end{conj}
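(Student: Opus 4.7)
The conjecture asserts three congruences modulo $p^2$. The first, between the $p$-adic Gamma ratio and the first hypergeometric sum, follows from Proposition \ref{prop:9} together with Lemma \ref{AltTrunc} (applied with $n=2$, $j=1$, $r=2$), which extends the truncation of $\pFq{2}{1}{1/2,1/2}{,1}{-1}$ from $(p-1)/2$ up to $p-1$ modulo $p^2$. The last congruence, between the second and third sums, is a valuation observation: for $0\le k\le (p-3)/2$ the rising factorial $(3/2)_k$ is a $p$-adic unit, so $(k!/(3/2)_k)^2 p^2\in p^2\Z_p$ and these terms vanish modulo $p^2$; only the range $k\ge (p-1)/2$, where $(3/2)_k$ acquires its unique factor $p/2$, contributes.

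The heart of the matter is the middle equivalence. My plan is to substitute $k=(p-1)/2+j$ with $j=0,1,\ldots,(p-1)/2$ into the third sum and expand each piece modulo $p^2$. Using $(3/2)_{(p-1)/2}=p\,(1/2)_{(p-1)/2}$ together with the shift formulas
\begin{align*}
\bigl((p-1)/2+j\bigr)! &\equiv ((p-1)/2)!\cdot (1/2)_j\cdot (1+pH'_j)\pmod{p^2},\\
(3/2)_{(p-1)/2+j} &\equiv (3/2)_{(p-1)/2}\cdot j!\cdot \bigl(1+(p/2)H_j\bigr)\pmod{p^3},
\end{align*}
where $H'_j=\sum_{i=1}^j 1/(2i-1)$ and $H_j=\sum_{i=1}^j 1/i$, plus the consequence of Morley's congruence that $\bigl(((p-1)/2)!/(1/2)_{(p-1)/2}\bigr)^2\equiv 4^{-(p-1)}\equiv 1-2p q_2\pmod{p^2}$ with Fermat quotient $q_2=(2^{p-1}-1)/p$, the third sum rewrites modulo $p^2$ as
\[
L + p\Bigl(\sum_{j=0}^{(p-1)/2}\bigl((1/2)_j/j!\bigr)^2(-1)^j(2H'_j-H_j)-2q_2\,L\Bigr),
\]
where $L$ denotes the first hypergeometric sum. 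Thus the middle congruence reduces to the mod-$p$ harmonic identity
\[
\sum_{j=0}^{(p-1)/2}\left(\frac{(1/2)_j}{j!}\right)^2(-1)^j(2H'_j-H_j)\equiv 2q_2\,L\pmod{p}.
\]

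The principal obstacle is this harmonic-sum congruence. My approach is to differentiate Kummer's identity \eqref{Kummer} with respect to its numerator parameters $a$ and $b$ at $a=b=1/2$: logarithmic differentiation produces hypergeometric series weighted by $H'_j$ and $H_j$ on one side and derivatives of a $\Gamma$-ratio on the other, which upon reduction modulo $p$ yields an expression in $q_2$ through the classical identities $H_{(p-1)/2}\equiv -2q_2\pmod p$ and $\sum_{i=1}^{(p-1)/2}1/(2i-1)\equiv q_2\pmod p$. Should this direct parameter-derivative derivation prove too delicate, an alternative is to apply the $p$-adic deformation strategy of \cite{CLZ,LR} directly to the $_3F_2$ family $\pFq{3}{2}{1+x_1p,1+x_2p,1+x_3p}{,3/2+y_1p,3/2+y_2p}{-1}$, forcing termination by a negative-integer numerator and using a Whipple-type transformation to extract a solvable linear system in the $x_i,y_i$ modulo $p^2$. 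Either way, the delicate step is synchronizing the mod-$p^2$ expansions around the $p$-divisibility of $(3/2)_{(p-1)/2}$ while certifying the underlying combinatorial identity modulo $p$.
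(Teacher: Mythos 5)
Be aware that the statement you are proving is stated in the paper as a \emph{conjecture}: the middle relation carries the symbol $\overset{?}{\equiv}$, and the authors offer no proof, only numerical evidence together with the remark that it is motivated by Lemma \ref{GaussToHyper}. So there is no proof in the paper to match yours against; the relevant question is whether your attempt actually closes the conjecture, and it does not.

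What you do prove is correct. The first congruence is indeed Proposition \ref{prop:9} combined with Lemma \ref{AltTrunc} (with $r=2$, $n=2$, $j=1$), and this is exactly why the paper writes it with a plain $\equiv$. The final congruence is the valuation observation you give: $(3/2)_k$ first picks up a factor of $p$ (namely $p/2$) at $k=(p-1)/2$, so the terms with $k\le (p-3)/2$ lie in $p^2\Z_p$. Your reduction of the middle congruence is also sound: the substitution $k=(p-1)/2+j$, the identity $(3/2)_{(p-1)/2}=p\,(1/2)_{(p-1)/2}$, the two harmonic-sum expansions, and the Morley-type evaluation of $\bigl(((p-1)/2)!/(1/2)_{(p-1)/2}\bigr)^2\equiv 1-2pq_2 \pmod{p^2}$ all check out, and they correctly reduce the conjectural step to
$$\sum_{j=0}^{(p-1)/2}\left(\frac{(1/2)_j}{j!}\right)^2(-1)^j\bigl(2H'_j-H_j\bigr)\equiv 2q_2\,L \pmod{p}.$$
But this identity is precisely where the proposal stops: you name two candidate strategies and carry out neither. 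Moreover, the first strategy as described is not directly licit. The Kummer formula \eqref{Kummer} is quoted in the paper only for $b$ a negative integer, i.e., as an identity on a discrete set of parameter values; one cannot differentiate it with respect to $b$ at $b=1/2$. You would need the analytic (non-terminating) form of Kummer's theorem, and even then the passage from a derivative of a convergent complex series to a congruence for a truncated sum modulo $p$ requires its own argument (this is essentially the deformation machinery of \cite{LR}, which is your second, also unexecuted, alternative). As it stands, the conjectural middle congruence — the only part of the statement that is actually open — remains unproven.
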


\section{Proofs of Theorem \ref{thm:2} and Lemma \ref{GaussToHyper}}\label{ss:proofofthm4}

\subsection{Proof of Theorem \ref{thm:2}}
We first prove Theorem \ref{thm:2} which counts points on $C_{n,\l}$ over finite fields $\fq$ in terms of Gaussian hypergeometric functions.  We begin with a lemma.

\begin{lemma}\label{Gaussian_lemma}
Let $n\geq2$ be an integer and let $q$ be a prime power with $q=p^e\equiv 1 \pmod{n}$. Suppose $\eta_n\in\fqhat$ is a character of order $n$ and $\varepsilon$ denotes the trivial character. If $k\in\{1,\dots, n-1\}$, then

\begin{multline*}
\sum_{x_i\in\fq} \eta_n^k((x_1\cdots x_{n-1})^{n-1}(1-x_1)\cdots(1-x_{n-1})(x_1-\lambda x_2\cdots x_{n-1}))\\ = q^{n-1} \cdot\,  _{n}F_{n-1} \left(
\begin{matrix}
\eta_n^{n-k}, & \eta_n^{n-k}, & \dots, & \eta_n^{n-k} \\
     & \varepsilon, & \dots, & \varepsilon \\
\end{matrix}
; \lambda
\right)_q.
\end{multline*}
\end{lemma}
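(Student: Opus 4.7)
The plan is to iterate Greene's version of Euler's integral formula (Theorem \ref{Greene-3.13}) $(n-1)$ times to express $_nF_{n-1}$ as an explicit iterated character sum, and then match the result with the character sum on the left-hand side of the lemma via a simple substitution on $(\fq^\times)^{n-1}$.

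Set $A = \eta_n^{n-k}$, which is nontrivial since $1\le k\le n-1$. Iterating Theorem \ref{Greene-3.13} with constant top parameter $A$ and trivial bottom $\varepsilon$ contributes a factor $A(-1)/q$ at each step (using $A_nB_n(-1)=A(-1)$ and $\bar A_nB_n(1-y)=\bar A(1-y)$) and reduces the order by one. After $(n-1)$ steps one reaches $_1F_0$; a short computation from Definition \ref{def: HGF-finite} using character orthogonality gives $_1F_0(A;-;w)_q=\bar A(1-w)$ for $w\in\fq^\times$ (and $0$ at $w=0$). Putting this together yields
\begin{equation*}
q^{n-1}\cdot {}_nF_{n-1}\left(\begin{matrix}A, & A, & \ldots, & A \\ & \varepsilon, & \ldots, & \varepsilon\end{matrix};\lambda\right)_q = A(-1)^{n-1}\sum_{y_1,\ldots,y_{n-1}\in\fq}\bar A(1-\lambda y_1\cdots y_{n-1})\prod_{i=1}^{n-1}A(y_i)\bar A(1-y_i).
\end{equation*}
The support restriction $\lambda y_1\cdots y_{n-1}\neq 0$ coming from $_1F_0$ is automatic because the factors $A(y_i)$ already vanish when $y_i=0$ (assuming $\lambda\neq 0$).

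Next, apply the substitution $y_1 = 1/x_1$ and $y_i = x_i$ for $i\ge 2$, which is a bijection on $(\fq^\times)^{n-1}$. Multiplicativity of $A$ and $\bar A$ gives
\begin{equation*}
A(y_1)\bar A(1-y_1) = \bar A(-1)\bar A(1-x_1),\qquad \bar A(1-\lambda y_1\cdots y_{n-1}) = A(x_1)\bar A(x_1-\lambda x_2\cdots x_{n-1}).
\end{equation*}
Combined with the factors for $i\ge 2$ and using $\eta_n^k(x_i^{n-1}) = \eta_n^{-k}(x_i) = A(x_i)$ together with $\bar A = \eta_n^k$, the transformed summand becomes $\bar A(-1)$ times $\eta_n^k\bigl((x_1\cdots x_{n-1})^{n-1}(1-x_1)\cdots(1-x_{n-1})(x_1-\lambda x_2\cdots x_{n-1})\bigr)$, so
\begin{equation*}
q^{n-1}\cdot{}_nF_{n-1}(\cdots;\lambda)_q = A(-1)^{n-2}\sum_{x_i\in\fq}\eta_n^k\bigl((x_1\cdots x_{n-1})^{n-1}(1-x_1)\cdots(1-x_{n-1})(x_1-\lambda x_2\cdots x_{n-1})\bigr).
\end{equation*}

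It remains to verify $A(-1)^{n-2}=1$. For odd $n$, $\eta_n(-1)$ is simultaneously an $n$-th root of unity (since $\eta_n^n=\varepsilon$) and a square root of unity (since $\eta_n(-1)^2=\eta_n(1)=1$), hence equals $1$; thus $A(-1)=1$. For even $n$, the exponent $n-2$ is even and so $A(-1)^{n-2}=1$ automatically. The main obstacle is just the careful bookkeeping of the extended-character convention $\chi(0)=0$ throughout the iteration and substitution; all boundary terms (e.g.\ $x_i\in\{0,1\}$ or $x_1=\lambda x_2\cdots x_{n-1}$) contribute $0$ on both sides, so restricting the substitution to the bijection on $(\fq^\times)^{n-1}$ causes no loss.
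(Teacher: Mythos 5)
Your proof is correct and follows essentially the same route as the paper: both unwind the Gaussian ${}_nF_{n-1}$ into an $(n-1)$-fold character sum by iterating Theorem \ref{Greene-3.13}. The only difference is that the paper stops the iteration at a ${}_3F_2$ and invokes Greene's Corollary 3.14(ii) to produce the final character sum (with $x_1$ already in the right position), whereas you iterate down to ${}_1F_0$, evaluate it from the definition, and perform the inversion $y_1\mapsto 1/x_1$ by hand; your bookkeeping of the sign $A(-1)^{n-2}=1$ and of the $\chi(0)=0$ conventions is sound.
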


\begin{proof}
We apply Theorem \ref{Greene-3.13} $(n-3)$ times, noting that $\overline{\eta_n^{n-k}}=\eta_n^k$.  This gives

\begin{align*}
_{n}F_{n-1} \left(
\begin{matrix}
\eta_n^{n-k}, & \eta_n^{n-k}, & \dots, & \eta_n^{n-k} \\
     & \varepsilon, & \dots, & \varepsilon \\
\end{matrix}
; \lambda
\right)_q \\
&\hspace*{-1.5in}= \frac{(\eta_n^{n-k}(-1))^{n-3}}{q^{n-3}} \sum_{x_2,\dots, x_{n-2}\in\fq} \, _3F_2 \left(
\begin{matrix}
\eta_n^{n-k}, & \eta_n^{n-k}, & \eta_n^{n-k} \\
     & \varepsilon, & \varepsilon \\
\end{matrix}
; \lambda x_2\cdots x_{n-2}
\right)_q \\
&\hspace*{-1.25in}\cdot \eta_n^{n-k}(x_2\cdots x_{n-2}) \eta_n^k((1-x_2)\cdots (1-x_{n-2})).
\end{align*}

We next apply Corollary 3.14(ii) of \cite{Greene} to the $_3F_2$ function to obtain

\begin{multline*}
_{n}F_{n-1} \left(
\begin{matrix}
\eta_n^{n-k}, & \eta_n^{n-k}, & \dots, & \eta_n^{n-k} \\
     & \varepsilon, & \dots, & \varepsilon\\
\end{matrix}
; \lambda
\right)_q = \frac{(\eta_n^{n-k}(-1))^{n-2}}{q^{n-1}} \\
\cdot \sum_{x_1,\dots,x_{n-1}\in\fq} \eta_n^{n-k}(x_1\cdots x_{n-1})\eta_n^k((1-x_1)\cdots (1-x_{n-1})(x_1-\lambda x_2\cdots x_{n-1})).
\end{multline*}

\noindent Since $(\eta_n^{n-k}(-1))^{n-2}=\eta_n^{(n-k)(n-2)}(-1)=1$ and $$\eta_n^k((x_1\cdots x_{n-1})^{n-1})=\eta_n^{kn-k}(x_1\cdots x_{n-1})= \eta_n^{n-k}(x_1 \cdots x_{n-1}),$$ we have the result.

\end{proof} 

\noindent{We are now able to prove Theorem \ref{thm:2}. \\

\noindent {\it Proof of Theorem \ref{thm:2}.}
For convenience, we denote $$f(x_1,\dots, x_{n-1},\lambda)=(x_1\cdots x_{n-1})^{n-1}(1-x_1)\cdots(1-x_{n-1})(x_1-\lambda x_2\cdots x_{n-1}).$$
Then
\begin{align*}
\#C_{n,\lambda}(\fq)&=1+\sum_{x_i\in\fq} \#\{y\in\fq : y^n=f(x_1,\dots,x_{n-1},\lambda)\}\\
&= 1+ \sum_{\substack{x_i\in\fq \\  f(x_1,\dots,x_{n-1},\lambda)\neq 0}} \#\{y\in\fq : y^n=f(x_1,\dots,x_{n-1},\lambda)\} \\ \\
&\hspace*{0.5in} + \# \{ (x_1,\dots,x_{n-1})\in\mathbb{F}_q^{n-1} : f(x_1,\dots,x_{n-1},\lambda)=0\}.
\end{align*}
Using Prop. 8.1.5 in \cite{IR90}, we rewrite the first sum to see
\begin{align*}
\#C_{n,\lambda}(\fq) &= 1+\sum_{x_i\in\fq}\sum_{i=0}^{n-1} \eta_n^i(f(x_1,\dots,x_{n-1},\lambda))\\
&\hspace*{0.5in} + \# \{ (x_1,\dots,x_{n-1})\in\mathbb{F}_q^{n-1} : f(x_1,\dots,x_{n-1},\lambda)=0\}\\
&=1+q^{n-1}+\sum_{x_i\in\fq} \sum_{i=1}^{n-1} \eta_n^i(f(x_1,\dots,x_{n-1},\lambda)),
\end{align*}
since $$\varepsilon(f(x_1,\dots,x_{n-1},\lambda))+ \# \{ (x_1,\dots,x_{n-1})\in\mathbb{F}_q^{n-1} : f(x_1,\dots,x_{n-1},\lambda)=0\}=q^{n-1}.$$
Finally, we have
\begin{align*}
\#C_{n,\lambda}(\fq)&=1+q^{n-1}+ \sum_{i=1}^{n-1} \sum_{x_i\in\fq} \eta_n^i(f(x_1,\dots,x_{n-1},\lambda))\\
& = 1+q^{n-1} +q^{n-1} \, \sum_{i=1}^{n-1} \, _{n}F_{n-1} \left(
\begin{matrix}
\eta_n^{n-i}, & \eta_n^{n-i}, & \dots, & \eta_n^{n-i} \\
     & \varepsilon, & \dots, & \varepsilon \\
\end{matrix}
; \lambda
\right)_q
\end{align*}
by Lemma \ref{Gaussian_lemma}, which gives the result.\\

\subsection{Proof of Lemma \ref{GaussToHyper}}
To prove Lemma \ref{GaussToHyper}, which relates certain Gaussian hypergeometric functions to truncated hypergeometric series, we first give a lemma, which analyzes the Gaussian hypergeometric functions modulo a power of $p$.
\begin{lemma}\label{lem: GHF-Gp}
Let $n$ be a positive integer, and $p\equiv 1 \pmod n$ prime. Let $\varphi$ denote the Teichm\"uller character, and $\eta_n$ a character of order $n$ on $\F_p$ corresponding to $\varphi^{\frac{p-1}nj}$, for some $0<j<p-1$. Then
\begin{multline}\label{eq:2.3}
  p^{r-1}{} _{r}F_{r-1}\left(
\begin{matrix}
\overline{\eta_n}, & \overline{\eta_n}, & \cdots, & \overline{\eta_n} \\
   & \varepsilon, & \cdots, & \varepsilon \\
\end{matrix}
\, ; x
\right)_p \equiv \\
 \frac{1}{p-1}\(\sum_{k=(p-1)(\frac{n-j}{n})}^{p-2} \(\frac{\Gamma_p\(\frac k{p-1}-\frac{n-j}n\)}{\Gamma_p\(\frac k{p-1}\)\Gamma_p\(\frac jn\)} \)^r\overline\varphi^k(x)+(-1)^r\)
 \pmod{p^r},
\end{multline}

\begin{multline}\label{eq:2.4}
 p^{r-1}{} _{r}F_{r-1}\left(
\begin{matrix}
\eta_n, & \eta_n, & \cdots, & \eta_n \\
   & \varepsilon, & \cdots, & \varepsilon \\
\end{matrix}
\, ; x
\right)_p \equiv \\
 \frac{(-1)^r}{p-1}\(\sum_{k=0}^{(p-1)(\frac{n-j}{n})-1} \frac{\Gamma_p\(\frac k{p-1}\)^r\Gamma_p\(\frac jn\)^r}{\Gamma_p\(\frac jn+\frac k{p-1}\)^r} \overline\varphi^k(x)+\overline{\eta_n}((-1)^rx)\)  \pmod{p^r}.
\end{multline}

\end{lemma}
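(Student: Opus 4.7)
The plan is to expand Greene's definition of ${}_rF_{r-1}(\ldots)_p$ as a sum over characters $\chi\in\widehat{\F_p^\times}$, reduce each term to a product of $p$-adic Gamma values via the Gross--Koblitz formula, identify which characters contribute modulo $p^r$, and repackage the survivors via the reflection identity for $\Gamma_p$. Parameterizing $\chi=\varphi^{-k}$ for $k=0,1,\ldots,p-2$, the definition of $\binom{A}{B}$ together with $J(A,\overline B)=g(A)g(\overline B)/g(A\overline B)$ rewrites each ``generic'' term of the character sum as $\chi(-1)^r\chi(x)/p^r$ times a product of three Gauss sums divided by $g(\eta_n^{\pm 1})^r$. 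Two characters require separate handling via $J(A,\varepsilon)=-1$, namely $\chi=\varepsilon$ and $\chi=\overline{\eta_n}^{\pm 1}$; these will ultimately supply the isolated terms $(-1)^r$ in \eqref{eq:2.3} and $\overline{\eta_n}((-1)^rx)$ in \eqref{eq:2.4}.

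Next I apply Gross--Koblitz \eqref{eq:G-K}, $g(\varphi^{-\ell})=-\pi_p^\ell\Gamma_p(\ell/(p-1))$ for $0\le\ell\le p-2$, to each Gauss sum. Writing $\eta_n=\varphi^m$ with $m=(p-1)j/n$, the exponents $m\pm k$ appearing in $g(\eta_n^{\pm 1}\chi)$ may need to be reduced modulo $p-1$, which partitions the $k$-sum into two subranges according to whether wrap-around occurs. A direct count shows that the net power of $\pi_p$ in $g(\eta_n^{\pm 1}\chi)\,g(\overline\chi)/g(\eta_n^{\pm 1})$ is either $0$ or $p-1$. Since $\pi_p^{p-1}=-p$ and $p-1$ is a $p$-adic unit, the $r$-th power of the second option contributes a factor of $p^r$ after multiplication by $p^{r-1}\cdot p/(p-1)$ and vanishes modulo $p^r$. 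The first subrange produces Gamma factors in $\Z_p^\times$, and it is precisely the $k$-range appearing on the right of \eqref{eq:2.3}, \eqref{eq:2.4} (once the two degenerate characters are separated out).

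To match the closed-form summands stated in the lemma, I would then apply the reflection identity $\Gamma_p(y)\Gamma_p(1-y)=(-1)^{a_0(y)}$ from Proposition \ref{GpFacts}(c), using the congruences $k/(p-1)\equiv -k\pmod p$ and $j/n\equiv -m\pmod p$ to compute the relevant residues $a_0(\cdot)$. In the $\eta_n$ case the reindexing $k\mapsto k-m$ translates the surviving range $\{m+1,\ldots,p-2\}$ onto $\{1,\ldots,(p-1)(n-j)/n-1\}$, and the degenerate $\chi=\varepsilon$ contribution (namely $(-1)^r/(p-1)$) supplies the $k=0$ endpoint of the stated sum via $\Gamma_p(0)=1$. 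The $\overline{\eta_n}$ case is parallel: the surviving range is already in the stated form, and the degenerate $\chi=\eta_n$ term is absorbed as the left endpoint $k=(p-1)(n-j)/n$ of the sum, since the reflection relation $\Gamma_p(j/n)\Gamma_p(1-j/n)=(-1)^{m+1}$ collapses that summand to $(-1)^{r(m+1)}\eta_n(x)/(p-1)$, which matches the direct calculation of the $\chi=\eta_n$ contribution.

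The main obstacle is the careful bookkeeping of signs. Each step introduces its own: $\chi(-1)^r=(-1)^{kr}$ from the binomial definition, a $(-1)^r$ from the minus signs in each Gauss sum, a $(-1)^r$ from $\pi_p^{p-1}=-p$ on the boundary, and $(-1)^{a_0(\cdot)}$ from every application of reflection. Verifying that these combine into the overall prefactor $(-1)^r$ of \eqref{eq:2.4} (respectively $1$ of \eqref{eq:2.3}), together with the correct signs on the isolated terms, is the delicate part; the structural identification of which Gauss-sum terms vanish $\pmod{p^r}$ is clean.
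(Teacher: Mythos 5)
Your proposal follows essentially the same route as the paper's own proof: expand Greene's definition as a character sum, convert the binomials to Jacobi and then Gauss sums, apply Gross--Koblitz with $\chi=\overline\varphi^{\,k}$, split the $k$-range according to wrap-around of exponents modulo $p-1$ so that one subrange acquires a factor $p^r$ and dies modulo $p^r$, and treat the degenerate characters ($\chi=\varepsilon$, resp.\ $\chi=\overline{\eta_n}$) separately via $J(\cdot,\varepsilon)=-1$. The only cosmetic difference is your final appeal to the reflection formula, which the paper does not need since the stated $\Gamma_p$-quotients fall out of Gross--Koblitz directly.
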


\begin{proof}
  For $x\in\F_p^\times$,  it follows from the definition that
\begin{multline*}
  {} _{r}F_{r-1} \left(
\begin{matrix}
\eta_n, & \eta_n, & \cdots, & \eta_n \\
   & \varepsilon, & \cdots, & \varepsilon \\
\end{matrix}
\, ; x
\right)_p= 
\frac p{p-1}\sum_{\chi\in \widehat{\F_p^\times}}\CC{\eta_n\chi}\chi^r \chi(x)=\frac {p^{1-r}}{p-1}\sum_{\chi}J(\eta_n\chi,\overline\chi)^r\chi^r(-1)\chi(x),
\end{multline*}
and also that
\begin{multline*}
  J(\overline{\eta_n}\chi,\overline\chi)=
  \begin{cases}
     \frac{\chi(-1)pg(\overline{\eta_n}\chi)}{g(\overline{\eta_n})g(\chi)},& \mbox{ if } \chi\neq \varepsilon,\\
     -1,& \mbox{ if } \chi=\varepsilon,
  \end{cases} 
\mbox{ while, }
   J(\eta_n\chi,\overline\chi)=
  \begin{cases}
     \frac{\chi(-1)g(\overline{\eta_n})g(\overline\chi)}{g(\overline{\eta_n}\overline\chi)},& \mbox{ if } \chi\neq \overline{\eta_n},\\
     -1,& \mbox{ if } \chi=\overline{\eta_n}.
  \end{cases}
\end{multline*}
Thus, we have that
\begin{multline*}
\sum_{\chi}J(\overline{\eta_n}\chi,\overline\chi)^r\chi^r(-1)\chi(x)=p^r\sum_{\chi\neq \varepsilon} \frac{g(\overline{\eta_n}\chi)^r}{g(\overline{\eta_n})^rg(\chi)^r}\chi(x)+(-1)^r\\
  =p^r\sum_{k=1}^{(p-1)(\frac{n-j}{n})-1} \frac{g(\overline\varphi^{\frac{p-1}nj+k})^r}{g(\overline\varphi^{\frac{p-1}nj})^rg(\overline\varphi^{k})^r}\overline\varphi^k(x) 
  +p^r\sum_{k=(p-1)(\frac{n-j}{n})}^{p-2} \frac{g(\overline\varphi^{k-(p-1)\frac{n-j}n})^r}{g(\overline\varphi^{\frac{p-1}nj})^rg(\overline\varphi^{k})^r}\overline\varphi^k(x)+\({-1}\)^r\\
   =p^r\sum_{k=1}^{(p-1)(\frac{n-j}{n})-1}(-1)^r \(\frac{\Gamma_p\(\frac jn+\frac k{p-1}\)}{\Gamma_p\(\frac k{p-1}\)\Gamma_p\(\frac jn\)} \)^r\overline\varphi^k(x)\\
  +\sum_{k=(p-1)(\frac{n-j}{n})}^{p-2} \(\frac{\Gamma_p\(\frac k{p-1}-\frac{n-j}n\)}{\Gamma_p\(\frac k{p-1}\)\Gamma_p\(\frac jn\)} \)^r\overline\varphi^k(x)+(-1)^r.
\end{multline*}

Similarly,
\begin{multline*}
   \sum_{\chi} J(\eta_n\chi,\overline\chi)^r\chi^r(-1)\chi(x)=\sum_{\chi\neq \overline{\eta_n}} \frac{g(\overline{\eta_n}\chi)^r}{g(\overline{\eta_n})^rg(\overline\chi)^r}\chi(x)+(-1)^r\overline{\eta_n}^r(-1)\overline{\eta_n}(x) \\
=(-1)^r\sum_{k=0}^{(p-1)(\frac{n-j}{n})-1} \(\frac{\Gamma_p\(\frac k{p-1}\)\Gamma_p\(\frac jn\)}{\Gamma_p\(\frac jn+\frac k{p-1}\)} \)^r\overline\varphi^k(x)
+ (-1)^r\overline{\eta_n}^r(-1)\overline{\eta_n}(x)\\
+ p^r\sum_{k=(p-1)(\frac{n-j}{n})+1}^{p-2} \(\frac{\Gamma_p\(\frac k{p-1}\)\Gamma_p\(\frac jn\)}{\Gamma_p\(\frac k{p-1}-\frac{n-j}n\)} \)^r\overline\varphi^k(x).
\end{multline*}
\end{proof} 

\noindent We are now able to prove Lemma \ref{GaussToHyper}.\\

\noindent {\it Proof of Lemma \ref{GaussToHyper}.}
For the first congruence, we use \eqref{eq:2.4}.  By Proposition \ref{GpFacts}, when $k<p$ we have that $\G_p(-k)=\G_p(-k)/ \G_p(0)=1/k!$ and $\G_p(\frac{k}{p-1})\equiv \G_p(-k) \pmod p$. Similarly,
$$\frac{\G_p(\frac jn)}{\G_p(\frac jn+\frac k{p-1})}\equiv \frac{\G_p(\frac jn)}{\G_p(\frac jn-k)}\pmod p$$
and
$$\frac{\G_p(\frac jn)}{\G_p(\frac jn-k)}=\left(1-\frac jn\right)\cdots\left( k-\frac jn\right)=\left(1-\frac jn\right)_k.$$
When $k=(p-1)\(\frac{n-j}n\)$, we have $\overline{\eta_n}(\pm(-1)^r)\equiv  \frac{(1-\frac jn)_k^r}{k!^r} (\pm1)^k  \pmod p$. Thus the first claim follows.

For the second claim, we consider \eqref{eq:2.3} and use a similar argument. Notice that
$$\frac{\G_p(\frac{k}{p-1}-\frac{n- j}n)}{\G_p(\frac jn)}\equiv \frac{\G_p(-k-\frac{n- j}n)}{\G_p(\frac jn)}=\frac{-p(\frac{n-j}n)}{(1-\frac jn)(2-\frac jn)\cdots (k-\frac jn)} \pmod p,$$
and there is no $-p(\frac{n-j}n)$ in the numerator since for $(p-1)(\frac{n-j}n) \leq k \leq p-2$ the denominator, $(1-\frac jn)_{k+1}$, will contain a multiple of $p$, which is $p(\frac {n-j}n)$. This term will not show up in the quotient of $p$-adic Gamma values and the corresponding term is $-1$ by the functional equation of $\G_p(\cdot)$. Thus
$$\frac{\G_p(\frac{k}{p-1}-\frac{n- j}n)}{\G_p(\frac jn)}\equiv -\frac{p}{(2-\frac jn)_k} \pmod p,$$
which concludes the proof of the second claim.\\

\section{The proof of Theorem \ref{GaussianTheorem}}\label{Gauss}

\noindent The following proposition establishes case (1) of Theorem \ref{GaussianTheorem}.

\begin{prop}
Let $q=p^e\equiv 1 \pmod 3$ be a prime power and let $\eta_3$ be a character of order 3 in $\fqhat$.  Then
$$ q^2\, \cdot \hgthree{\eta_3}{\eta_3}{\eta_3}{\varepsilon}{\varepsilon}{1}_q = J(\eta_3,\eta_3)^2-J(\eta_3^2,\eta_3^2).$$
\end{prop}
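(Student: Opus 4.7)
The plan is to unfold Greene's hypergeometric function into a character sum on an affine surface, and then evaluate the sum via Jacobi-sum manipulations together with a finite-field hypergeometric identity.

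First, using $\binom{A}{B}=\frac{B(-1)}{q}J(A,\overline{B})$ together with $\chi(-1)^3=\chi(-1)$, one unpacks Greene's Definition \ref{def: HGF-finite} to obtain
$$q^2\cdot{}_3F_2\!\left(\begin{matrix}\eta_3 & \eta_3 & \eta_3\\ & \varepsilon & \varepsilon\end{matrix};1\right)_q=\frac{1}{q-1}\sum_{\chi\in\fqhat}\chi(-1)\,J(\eta_3\chi,\overline{\chi})^3.$$
Expanding each cubed Jacobi sum as the triple sum $\sum_{x_1,x_2,x_3\in\fq}\eta_3\chi(x_1x_2x_3)\overline{\chi}\bigl(\prod_i(1-x_i)\bigr)$, swapping summations, and invoking the orthogonality relation $\sum_\chi\chi(t)=(q-1)[t=1]$ for $t\in\fq$, the defining constraint collapses to $(1-x_1)(1-x_2)(1-x_3)+x_1x_2x_3=0$, which after expansion and factoring reads
$$x_1x_2x_3=(x_1-1)(x_2-1)(x_3-1).$$

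Next, apply the birational substitution $y_i:=1-1/x_i$, a bijection of $\fq\setminus\{0,1\}$ with itself, which transforms the surface equation into the toric relation $y_1y_2y_3=1$. Since $\eta_3(x_1x_2x_3)=\overline{\eta_3}(\prod_i(1-y_i))$, eliminating $y_3=1/(y_1y_2)$ and using $\eta_3(-1)=1$ yields the compact double sum
$$q^2\cdot{}_3F_2=\sum_{y_1,y_2\in\fq}\eta_3(y_1y_2)\,\overline{\eta_3}\bigl((1-y_1)(1-y_2)(1-y_1y_2)\bigr).$$

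The last and most delicate step is to evaluate this double character sum in closed form. Grouping terms by $t=y_1y_2$ and making the change of variable $x=1/y_1$ in the inner sum identifies the latter (by Greene's definition) as $q\cdot{}_2F_1(\eta_3,\eta_3;\varepsilon;t)_q$, so that the problem reduces to computing $\sum_{t\in\fq^\times}\eta_3(t)\overline{\eta_3}(1-t)\,{}_2F_1(\eta_3,\eta_3;\varepsilon;t)_q$. A Pfaff-type transformation of Greene then recasts the inner ${}_2F_1$ with parameters $(\eta_3,\overline{\eta_3};\varepsilon)$, where the numerator characters multiply to the denominator character, bringing the sum into range of a finite-field analog of Gauss's or Dixon's summation. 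Simplifying the resulting Gauss sums via $g(\eta_3)g(\overline{\eta_3})=q$ together with the Hasse--Davenport-type identity $g(\eta_3)^3=q\,J(\eta_3,\eta_3)$, and tracking the boundary contributions from the ``singular'' characters $\chi=\varepsilon$ and $\chi=\overline{\eta_3}$ (each of which contributes $-1$ to the original sum), should recover the stated combination $J(\eta_3,\eta_3)^2-J(\eta_3^2,\eta_3^2)$. The main obstacle is precisely this final identification: the bookkeeping of boundary terms and the choice of the correct sequence of finite-field hypergeometric transformations that isolate $\alpha^2-\overline{\alpha}$ (where $\alpha=J(\eta_3,\eta_3)$ satisfies $\alpha\overline{\alpha}=q$) rather than any other polynomial combination of comparable absolute value.
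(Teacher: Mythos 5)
Your reduction of $q^2\cdot{}_3F_2$ to the double character sum $\sum_{y_1,y_2}\eta_3(y_1y_2)\,\overline{\eta_3}\bigl((1-y_1)(1-y_2)(1-y_1y_2)\bigr)$ is correct as far as it goes --- it is essentially the $n=3$, $\lambda=1$ case of the point-counting identity (Lemma \ref{Gaussian_lemma}/Theorem \ref{thm:2}) run backwards. But the proof stops exactly where the content of the proposition begins. The whole claim is a closed-form evaluation of a finite-field ${}_3F_2$ at $1$, and your final paragraph only relocates that evaluation: once you group by $t=y_1y_2$ and identify the inner sum as $q\cdot{}_2F_1(\cdots;t)_q$, the outer sum $\sum_t\eta_3(t)\overline{\eta_3}(1-t)\,{}_2F_1(\cdots;t)_q$ is, by Greene's inductive formula (Theorem \ref{Greene-3.13}), again a ${}_3F_2(\cdots;1)_q$ up to normalization, so the argument is circular unless a genuine ${}_3F_2(1)$ summation theorem is invoked. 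A finite-field analogue of Gauss's ${}_2F_1(1)$ evaluation is not enough, you do not specify which summation theorem would apply or check its hypotheses, and you concede the gap yourself (``the main obstacle is precisely this final identification''). A minor additional slip: the characters $\chi=\varepsilon$ and $\chi=\overline{\eta_3}$ each contribute $-1/(q-1)$, not $-1$, to the normalized sum, and after orthogonality they are absorbed uniformly into the surface sum, so no separate boundary bookkeeping arises there.

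The paper's proof consists precisely of the ingredient you are missing: Greene's Theorem 4.35, a finite-field ${}_3F_2(1)$ evaluation, gives
$$q^2\cdot{}_3F_2\left(\begin{matrix}\eta_3,&\eta_3,&\eta_3\\ &\varepsilon,&\varepsilon\end{matrix};1\right)_q=q^2\,\eta_3^2(-1)\binom{\eta_3}{\eta_3^2}^2-q\,\eta_3(-1)\binom{\eta_3^2}{\eta_3},$$
after which the conversions $\binom{\eta_3}{\eta_3^2}=\frac1qJ(\eta_3,\eta_3)$ and $\binom{\eta_3^2}{\eta_3}=\frac{\eta_3(-1)}{q}J(\eta_3^2,\eta_3^2)$, together with $\eta_3(-1)=1$, finish the computation in one line. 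To salvage your route you would need either to quote that theorem (which makes the preliminary unfolding unnecessary) or to evaluate the double sum over $y_1,y_2$ directly by an explicit Jacobi-sum manipulation; neither is done here, so the proposal as written does not establish the proposition.
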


\begin{proof}
Beginning with Theorem 4.35 in \cite{Greene}, we have

$$q^2 \,\cdot \hgthree{\eta_3}{\eta_3}{\eta_3}{\varepsilon}{\varepsilon}{1}_q = q^2 \eta_3^2(-1) \binom{\eta_3}{\eta_3^2}\binom{\eta_3}{\eta_3^2} -q\eta_3(-1)\binom{\eta_3^2}{\eta_3}$$
Then, since $\binom{\eta_3}{\eta_3^2} =\frac{1}{q} J(\eta_3,\eta_3)$ and $\binom{\eta_3^2}{\eta_3} =\frac{\eta_3(-1)}{q} J(\eta_3^2,\eta_3^2),$ we get the result.

\end{proof} 

\noindent We now restate an equivalent form of case (2) of Theorem \ref{GaussianTheorem}.

\begin{theorem} Let $q=p^e\equiv 1\pmod{4}$ be a prime power and let $\eta_4$ and $\eta_2$ be characters of order 4 and 2, respectively, in $\fqhat$. Then

$$\sum_{x,y,z\in \F_q}\eta_4(x^3y^3z^3(1-x)(1-y)(1-z)(x-yz))
=J(\overline{\eta_4},\eta_2)^3+qJ(\overline{\eta_4},\eta_2)-J({\eta_4},\eta_2)^2. $$

Equivalently, we have
$$
q^3\cdot\, _4F_3\(\begin{matrix}
\overline{\eta_4}, & \overline{\eta_4},&\overline{\eta_4},&\overline{\eta_4}  \\
   & \varepsilon,  & \varepsilon, & \varepsilon\\
\end{matrix};1\)_q
=J(\overline{\eta_4},\eta_2)^3+qJ(\overline{\eta_4},\eta_2)-J({\eta_4},\eta_2)^2.
$$
\end{theorem}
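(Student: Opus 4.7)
The statement naturally splits into two cases according to whether $\eta_4$ is a square character in $\widehat{\F_q^\times}$: the case $q\equiv 1\pmod 8$, where $\eta_4$ is a square so Theorem \ref{McCarthy5F4} of McCarthy applies directly, and the case $q\equiv 5\pmod 8$, where it does not. I would handle these two cases separately, as hinted at in the introduction.

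For $q\equiv 1\pmod 8$, the plan is to apply Theorem \ref{McCarthy5F4} to a well-chosen well-poised $_5F_4^\ast$ whose right-hand side produces the target $_4F_3$ together with easily identifiable correction terms. The natural choice is $A=\eta_2$, so that $R^2=A$ has solutions $R=\pm\eta_4$, and to pick $B,C,D,E$ among $\{\eta_2,\overline{\eta_4},\eta_4\}$ satisfying the non-degeneracy conditions so that one of the two $_4F_3^\ast$'s on the right-hand side of Theorem \ref{McCarthy5F4}, after conversion to Greene's normalization via Proposition \ref{Greene-McCarthy}, matches the target $_4F_3(\overline{\eta_4},\overline{\eta_4},\overline{\eta_4},\overline{\eta_4};\varepsilon,\varepsilon,\varepsilon;1)_q$. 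The remaining ingredients (the companion $_4F_3^\ast$ and the trailing $_2F_1^\ast$ multiplied by the quotient of Gauss sums) would then be expanded, and using only the basic identities $g(\chi)g(\overline\chi)=\chi(-1)q$ and $J(\chi_1,\chi_2)=g(\chi_1)g(\chi_2)/g(\chi_1\chi_2)$, collapsed into the three Jacobi sum contributions $J(\overline{\eta_4},\eta_2)^3$, $qJ(\overline{\eta_4},\eta_2)$, and $-J(\eta_4,\eta_2)^2$ of the right-hand side.

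For $q\equiv 5\pmod 8$ the first conclusion of Theorem \ref{McCarthy5F4} forces the corresponding $_5F_4^\ast$ to vanish, so the argument above does not apply and I would instead use the representation-theoretic framework outlined in the introduction. Interpreted through Lemma \ref{Gaussian_lemma}, both sides of the identity are expressible via the trace of Frobenius on a three-dimensional summand of the third \'etale cohomology of $C_{4,1}$; by Weil, the Jacobi sums $J(\eta_4,\eta_2)$ and $J(\overline{\eta_4},\eta_2)$ are values at primes above $p$ of compatible Gr\"ossencharacters on $\Q(\sqrt{-1})$. One can then invoke the identity for $q=p^2$ (which is already covered by the first case, since $p^2\equiv 1\pmod 8$) to pin down the characteristic polynomial of Frobenius at $p$ on this piece, and combining with the Hasse--Davenport relation for Jacobi sums together with the determination of the determinant by the Weil pairing would force the identity at $q=p$.

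The main obstacle is expected to be in the first case. The non-degeneracy conditions in Theorem \ref{McCarthy5F4} (namely $A\neq\varepsilon$, $B\neq\varepsilon$, $B^2\neq A$, $CD,CE,DE\neq A$, and $CDE\neq A$) are restrictive, so finding $A,B,C,D,E$ for which exactly one of the two $_4F_3^\ast$ summands produces the target $_4F_3$ in Greene's normalization, rather than a twist of it, is delicate. Equally non-trivial is the clean collapse of the remaining Gauss sum quotients into the specific combination $J(\overline{\eta_4},\eta_2)^3 + qJ(\overline{\eta_4},\eta_2) - J(\eta_4,\eta_2)^2$, which will require careful bookkeeping of sign contributions coming from exceptional characters such as $\varepsilon$ or $\eta_2$. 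In the second case, the subtlety lies in rigorously excluding any residual sign ambiguity in the eigenvalues of Frobenius when descending from $q=p^2$ back to $q=p$.
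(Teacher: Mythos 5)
Your two-case division ($q\equiv 1\pmod 8$ via McCarthy's Whipple analogue, $q\equiv 5\pmod 8$ via Galois-theoretic descent) matches the paper's, but the execution of the first case has a structural flaw. You propose to choose $A,B,C,D,E$ in Theorem \ref{McCarthy5F4} so that one of the two ${}_4F_3^\ast$ terms on the \emph{right-hand side} becomes the target ${}_4F_3(\overline{\eta_4},\overline{\eta_4},\overline{\eta_4},\overline{\eta_4};\varepsilon,\varepsilon,\varepsilon;1)_q$. This cannot happen: each of those ${}_4F_3^\ast$'s has $R$ (with $R^2=A$) among its lower parameters, McCarthy's hypotheses force $A\neq\varepsilon$ and hence $R\neq\varepsilon$, so the lower parameters can never all be trivial, and Proposition \ref{Greene-McCarthy} only rescales a hypergeometric function without changing its parameters. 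Moreover your specific choice $A=\eta_2$ with $C,D,E$ drawn from $\{\eta_4,\overline{\eta_4}\}$ runs straight into the excluded degeneracies (e.g.\ $C=D=\overline{\eta_4}$ gives $CD=\eta_2=A$); and since $\eta_2=\eta_4^2$ is a square for every $q\equiv 1\pmod 4$, the dichotomy at $8$ would never arise with $A=\eta_2$, which is itself a sign that this is not the choice driving the argument. The paper's setup is the reverse of yours: it takes $A=\eta_4$, which is a square precisely because $q\equiv 1\pmod 8$ supplies $\eta_8$ with $R=\pm\eta_8$, and puts the target on the \emph{left}. Concretely, the ${}_5F_4^\ast$ with upper parameters $\eta_4,\eta_4,\eta_4,\eta_4,\eta_8$ and lower parameters $\varepsilon,\varepsilon,\varepsilon,\eta_8$ is related to $q^4\cdot{}_4F_3(\eta_4,\eta_4,\eta_4,\eta_4;\varepsilon,\varepsilon,\varepsilon;1)_q$ by Greene's reduction formula \cite[Theorem 3.15(ii)]{Greene} applied to the matching pair $\eta_8/\eta_8$, together with the comparison of the two normalizations; the two ${}_4F_3^\ast$'s produced by Theorem \ref{McCarthy5F4} then have genuinely mixed lower parameters ($\eta_8,\eta_8^3,\varepsilon$, etc.) and must be evaluated separately by Greene's ${}_3F_2$ and contiguous transformations (Lemma \ref{lem:23}) before the Jacobi-sum bookkeeping. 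Without this reversal your case-one argument does not get off the ground.

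In the second case you correctly identify the residual sign ambiguity as the crux, but your proposed fix does not close it: knowing the identity at $q=p^2$ determines the three Frobenius eigenvalues at $p$ only up to individual signs, and the determinant (or a Hasse--Davenport relation) pins down only their product, leaving an unresolved two-fold ambiguity at every prime. The paper's resolution is Clifford's theorem: since $G_{\Q(\sqrt{-1},\sqrt{2})}$ has index $2$ in $G_{\Q(\sqrt{-1})}$ and the restriction of the relevant $3$-dimensional piece to that subgroup is a known sum of three Gr\"ossencharacters, the piece itself must be that sum with each summand twisted by $\varphi^{n_i}$, where $\varphi$ is the fixed quadratic character cutting out $\Q(\sqrt{-1},\sqrt{2})$ and $n_1,n_2,n_3\in\{0,1\}$ are independent of $p$; checking a few primes $p\equiv 5\pmod 8$ then forces all $n_i=0$ and settles every prime at once. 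Some such global rigidity statement is what your sketch is missing.
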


As mentioned in the introduction, this result says that a 3-dimensional Galois representation of $G_{\Q(\sqrt{-1})}:=\Gal(\overline \Q/\Q(\sqrt{-1}))$ arising from $C_{4,1}$  is isomorphic to a direct sum of three Gr\"ossencharacters. The proof contains two steps using totally different approaches. We first establish the case for $q\equiv 1\pmod 8$ using results of Greene and McCarthy. This is equivalent to proving the two Galois representations are isomorphic when they are restricted to the subgroup  $G_{\Q(\sqrt{-1},\sqrt{2})}:=\Gal(\overline \Q/\Q(\sqrt{-1},\sqrt{2}))$. In the second part of the proof, we use representation theory to draw the final conclusion.
We now start with the case when $q=p^e\equiv 1 \pmod{8}$ is a prime power, we prove this case via the series of results below.  The Lemma below evaluates two modified Gaussian hypergeometric functions.

\begin{lemma}\label{lem:23}
Let $q=p^e\equiv 1 \pmod{8}$ be a prime power. Then
\begin{multline*}
 {}_4F_3\(\begin{matrix}
\overline{\eta_8}, &{\eta_4},&{\eta_4}, &\eta_8  \\
   & \eta_8,  & \eta_8^3, & \varepsilon\\
\end{matrix};1\)_q^\ast=\\
\frac{1}{J(\eta_4,\overline{\eta_8}^3)}\(J(\eta_8,\bet4)-\et8(-1)J(\overline{\eta_8},\overline{\eta_8})J(\et2,\et4)+\frac{J(\et8,\bet4)^3}q\),
\end{multline*}
\begin{multline*}
 {}_4F_3\(\begin{matrix}
{\eta_8^3}, &{\eta_4},&{\eta_4}, &\eta_8  \\
   & \overline{\eta_8}^3,  & \eta_8^3, & \varepsilon\\
\end{matrix};1\)_q^\ast= \\
\frac{1}{J(\eta_4,{\eta}_8^3)} \(J(\eta_8^3,\bet4)-\et8(-1)J({\eta}_8,\overline{\eta_8}^3)J(\et2,\et4)+\frac{J(\bet8,\bet4)J(\bet4,\et8^3)^2}q\).
\end{multline*}
\end{lemma}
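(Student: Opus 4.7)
The plan is to evaluate each of the two $_4F_3^{\ast}$ functions individually by exploiting a matched pair of parameters present in each. The first has the character $\eta_8$ appearing in both the upper and lower rows, and the second has $\eta_8^3$ similarly duplicated. Using the Gauss sum identity $g(\chi)g(\overline{\chi})=\chi(-1)q$ (valid for $\chi\neq\varepsilon$) together with $g(\varepsilon)=-1$, I would cancel this matched pair inside the defining character sum of each $_4F_3^{\ast}$, reducing it to a $_3F_2^{\ast}$ at argument $1$ plus a single ``exceptional character'' correction term arising from the unique $\chi$ at which the cancellation degenerates (namely $\chi=\overline{\eta_8}$ for the first function and $\chi=\overline{\eta_8}^3$ for the second).

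Next, one checks that the two resulting $_3F_2^{\ast}$'s both satisfy a Saalsch\"utz balancing condition on their parameters: in the first, $\overline{\eta_8}\cdot\eta_4\cdot\eta_4=\eta_8^3=\eta_8^3\cdot\varepsilon$ equates the product of the upper parameters with that of the lower parameters; and similarly $\eta_4\cdot\eta_4\cdot\eta_8=\overline{\eta_8}^3=\overline{\eta_8}^3\cdot\varepsilon$ in the second. Consequently a finite field Saalsch\"utz-type evaluation from Greene \cite{Greene} produces a closed form for each $_3F_2^{\ast}$ as a ratio of Gauss sums. I would then translate the Gauss sums into Jacobi sums using $J(A,B)=g(A)g(B)/g(AB)$ for $AB\neq\varepsilon$, producing the leading two Jacobi sum terms on each right-hand side in the lemma (including the $J(\eta_8,\overline{\eta_4})$ and $J(\overline{\eta_8},\overline{\eta_8})J(\eta_2,\eta_4)$ pieces in the first formula and their analogues in the second). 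The remaining $q^{-1}$-weighted cubic Jacobi sum pieces, namely $J(\eta_8,\overline{\eta_4})^3/q$ in the first formula and $J(\overline{\eta_8},\overline{\eta_4})J(\overline{\eta_4},\eta_8^3)^2/q$ in the second, arise as the explicit form of the exceptional-character correction from the cancellation step, after cleaning up via the same Gauss-to-Jacobi dictionary.

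The main technical obstacle will be the meticulous bookkeeping of signs, $\chi(-1)$-factors, and the precise sequence of Gauss sum identities needed to match the output exactly to the stated Jacobi sum combinations. In particular, the asymmetry between the two formulas -- with the second containing the product $J(\overline{\eta_8},\overline{\eta_4})J(\overline{\eta_4},\eta_8^3)^2$ in place of a clean cube -- reflects the fact that the exceptional character in the second $_4F_3^{\ast}$ interacts differently with the remaining parameters than it does in the first, so the Gauss sum simplifications in the two cases must be carried out separately. A secondary check is that the denominators $J(\eta_4,\overline{\eta_8}^3)$ and $J(\eta_4,\eta_8^3)$ appear correctly; these emerge from the normalization factors in McCarthy's definition of $_{n+1}F_n^{\ast}$ after the cancellation.
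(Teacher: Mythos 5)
Your proposal is correct and follows essentially the same route as the paper: there, too, each ${}_4F_3$ is reduced via its repeated parameter ($\eta_8$, resp.\ $\eta_8^3$) to a balanced ${}_3F_2$ at $1$ plus a $q^{-1}$-weighted correction term, that ${}_3F_2$ is evaluated by Greene's Saalsch\"utz-type summation (Theorem 4.35 of \cite{Greene}), and everything is converted to Jacobi sums. The only cosmetic difference is that the paper first passes from McCarthy's normalized function to Greene's via Proposition \ref{Greene-McCarthy} and performs the reduction with Greene's binomial-coefficient identities (Theorem 3.15(iv) and 4.35) rather than by cancelling Gauss sums in the defining character sum directly.
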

\begin{proof}
  Firstly, we obtain that
$$
 {}_4F_3\(\begin{matrix}
\overline{\eta_8}, &{\eta_4},&{\eta_4}, &\eta_8  \\
   & \eta_8,  & \eta_8^3, & \varepsilon\\
\end{matrix};1\)_q^\ast =-q\(\CC{\et4}{\et8}\CC{\et4}{\et8^3}\)^{-1}
 {}_4F_3\(\begin{matrix}
\overline{\eta_8}, &{\eta_4},&{\eta_4}, &\eta_8  \\
   & \eta_8,  & \eta_8^3, & \varepsilon\\
\end{matrix};1\)_q.
$$
Using the transformations Theorem 3.15(iv), Theorem 4.35 in \cite{Greene}, and the fact $\eta_4(-1)=1$ when $q\equiv 1 \pmod 8$, we have
\begin{multline*}
  {}_4F_3\(\begin{matrix}
\overline{\eta_8}, &{\eta_4},&{\eta_4}, &\eta_8  \\
   & \eta_8,  & \eta_8^3, & \varepsilon\\
\end{matrix};1\)_q ={}_4F_3\(\begin{matrix} {\eta}_8, &{\eta_4},&{\eta_4}, &\bet8  \\ & \eta_8,  & \eta_8^3, & \varepsilon\\ \end{matrix};1\)_q \\
=\CC{\et4}{\et8}{}_3F_2\(\begin{matrix}
{\eta_4},&{\eta_4}, &\bet8  \\ & \eta_8^3, & \varepsilon\\ \end{matrix};1\)_q 
-\frac{\et8(-1)}q \CC{\et8}{\et4}\CC{\bet4}{\bet8} \\
=\CC{\et4}{\et8}\({\et8(-1)}\CC{\et8}{\bet8}\CC{\et4}{\et2}-\frac{{\et8(-1)}}q\CC{\et8}{\et4}\)-\frac1q \CC{\et8}{\et4}^2.
\end{multline*}
Therefore, we can conclude that
\begin{align*}
 {}_4F_3&\(\begin{matrix}
\overline{\eta_8}, &{\eta_4},&{\eta_4}, &\eta_8  \\
   & \eta_8,  & \eta_8^3, & \varepsilon\\
\end{matrix};1\)_q^\ast\\
&=\frac{1}{J(\eta_4,\overline{\eta_8}^3)}\(J(\eta_8,\bet4)-{\et8(-1)}J(\overline{\eta_8},\overline{\eta_8})J(\et2,\et4)+\frac{J(\et8,\bet4)^3}q\).
\end{align*}
Likewise, we have the equality
\begin{align*}
{}_4F_3&\(\begin{matrix}
{\eta_8^3}, &{\eta_4},&{\eta_4}, &\eta_8  \\
   & \overline{\eta_8}^3,  & \eta_8^3, & \varepsilon\\
\end{matrix};1\)_q^\ast = \\
&\frac{1}{J(\eta_4,{\eta}_8^3)}\(J(\eta_8^3,\bet4)-{\et8(-1)}J({\eta}_8,\overline{\eta_8}^3)J(\et2,\et4)+\frac{J(\bet8,\bet4)J(\bet4,\et8^3)^2}q\).
\end{align*}

\end{proof} 
Next, we relate our target to a modified Gaussian $_5F_4$ hypergeometric function.

\begin{prop}
  Let $q=p^e\equiv 1 \pmod{8}$ be a prime power, and $\eta_8$ a character of order $8$ in $\fqhat$ with $\eta_8^2=\eta_4$ . Then
$$
q^4\cdot{}_4F_3\(\begin{matrix}
{\eta_4}, &{\eta_4},&{\eta_4},&{\eta_4}  \\
   & \varepsilon,  & \varepsilon, & \varepsilon\\
\end{matrix};1\)_q
=J({\eta_8},\eta_8)^4-q\cdot\,{}_5F_4\(\begin{matrix}
{\eta_4}, &{\eta_4},&{\eta_4},&{\eta_4}, &\eta_8  \\
   & \varepsilon,  & \varepsilon, & \varepsilon,&\eta_8\\
\end{matrix};1\)_q^\ast.
$$
\end{prop}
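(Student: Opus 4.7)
My plan is to expand the modified Gaussian hypergeometric series $_{5}F_4^\ast$ on the right-hand side via McCarthy's definition and isolate the portion that reconstructs Greene's $_4F_3$ on the left, so that the residual contribution at a single exceptional character accounts precisely for $J(\eta_8,\eta_8)^4$. Writing the $_{5}F_4^\ast$ as $\frac{1}{q-1}\sum_\chi I(\chi)$, the integrand is
$$I(\chi)=\frac{g(\eta_4\chi)^4}{g(\eta_4)^4}\cdot\frac{g(\overline\chi)^{4}}{g(\varepsilon)^{3}}\cdot\chi(-1)^{5}\cdot E_\chi,\qquad E_\chi:=\frac{g(\eta_8\chi)\,g(\overline{\eta_8\chi})}{g(\eta_8)\,g(\overline{\eta_8})},$$
where $E_\chi$ is the ``extra'' factor produced by the last $(\eta_8,\eta_8)$ parameter pair. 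For every $\chi\neq\overline{\eta_8}$, the identity $g(\psi)g(\overline\psi)=\psi(-1)q$ applied with $\psi=\eta_8\chi$ gives $E_\chi=\chi(-1)$, so $\chi(-1)^{5}E_\chi=\chi(-1)^{6}=1$; after this reduction the summand coincides term-for-term with that of ${}_4F_3^\ast(\eta_4,\eta_4,\eta_4,\eta_4;\varepsilon,\varepsilon,\varepsilon;1)_q$.

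I would then handle the single exceptional character $\chi=\overline{\eta_8}$ in both expansions. Here $g(\eta_8\chi)=g(\overline{\eta_8\chi})=g(\varepsilon)=-1$ and $\eta_4\overline{\eta_8}=\eta_8$, so after a short simplification using $g(\eta_8)g(\overline{\eta_8})=\eta_8(-1)q$ and $\eta_8(-1)^{2}=\eta_4(-1)=1$ (precisely where the hypothesis $q\equiv 1\pmod 8$ enters), the $_5F_4^\ast$ contribution at this character reduces to $-g(\eta_8)^{8}/(q(q-1)g(\eta_4)^{4})$. Subtracting the corresponding $_4F_3^\ast$ contribution yields
$${}_5F_4^\ast(\ldots)_q={}_4F_3^\ast(\eta_4,\eta_4,\eta_4,\eta_4;\varepsilon,\varepsilon,\varepsilon;1)_q+\frac{g(\eta_8)^{8}}{q\,g(\eta_4)^{4}}.$$

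Finally I would apply Proposition \ref{Greene-McCarthy} to convert $_4F_3^\ast$ into Greene's $_4F_3$; since $J(\eta_4,\varepsilon)=-1$ and hence $\binom{\eta_4}{\varepsilon}=-1/q$, one obtains
$${}_4F_3^\ast(\eta_4,\eta_4,\eta_4,\eta_4;\varepsilon,\varepsilon,\varepsilon;1)_q=-q^{3}\cdot{}_4F_3(\eta_4,\eta_4,\eta_4,\eta_4;\varepsilon,\varepsilon,\varepsilon;1)_q.$$
Using the Jacobi-sum identity $J(\eta_8,\eta_8)^{4}=g(\eta_8)^{8}/g(\eta_4)^{4}$, multiplying the combined equation by $-q$, and rearranging yields the claim. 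The main obstacle will be the careful bookkeeping of the $\chi(-1)$ powers and of the degenerate Gauss sums at $\chi=\overline{\eta_8}$: several factors there collapse to $g(\varepsilon)=-1$, and the hypothesis $q\equiv 1\pmod 8$ is needed precisely to force $\eta_8(-1)=\pm1$ so that these signs absorb cleanly.
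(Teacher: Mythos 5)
Your argument is correct --- I checked the Gauss--sum bookkeeping, the reduction $\chi(-1)^5E_\chi=\chi(-1)^6=1$ for $\chi\neq\overline{\eta_8}$, and the exceptional term, and your intermediate identity
$$
{}_5F_4^\ast={}_4F_3^\ast+\frac{g(\eta_8)^8}{q\,g(\eta_4)^4}=-q^3\cdot{}_4F_3+\frac{J(\eta_8,\eta_8)^4}{q}
$$
is exactly equivalent to what the paper derives. The organization, however, is genuinely different. The paper never expands McCarthy's sum character by character: it introduces Greene's ${}_5F_4$ with the degenerate parameter pair $(\eta_8;\eta_8)$ as an intermediate object, records the relation
$$
{}_5F_4(\cdots)_q=\tfrac{1}{q^3}\,{}_5F_4(\cdots)_q^\ast+\bigl(1-\tfrac1q\bigr)\,{}_4F_3(\cdots)_q
$$
obtained by comparing the two definitions (Proposition \ref{Greene-McCarthy} is unavailable there since $A_4=B_4=\eta_8$), and then quotes Greene's Theorem 3.15(ii) to reduce that ${}_5F_4$ to $-\tfrac1q\,{}_4F_3+J(\eta_8,\eta_8)^4/q^4$; eliminating the ${}_5F_4$ gives the claim. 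You instead stay entirely inside McCarthy's framework and obtain the $J(\eta_8,\eta_8)^4$ correction directly from the single exceptional character $\chi=\overline{\eta_8}$, then apply Proposition \ref{Greene-McCarthy} to the ${}_4F_3^\ast$ (where it does apply, since every $A_i\neq B_i$). Your route is more self-contained, replacing the citation of Greene's reduction formula by a short explicit computation with $g(\varepsilon)=-1$ and $g(\psi)g(\overline{\psi})=\psi(-1)q$; the paper's route is shorter on the page because both nontrivial steps are outsourced to known identities. One small correction to your closing remark: the hypothesis $q\equiv 1\pmod 8$ is not what forces $\eta_8(-1)=\pm1$ (that holds for any character, as $\eta_8(-1)^2=\eta_8(1)=1$, and indeed the factors $\eta_8(-1)$ cancel in your exceptional-term computation whatever their common value); the hypothesis is needed only for the existence of $\eta_8\in\fqhat$ and, where you invoke it, for $\eta_4(-1)=\eta_8(-1)^2=1$.
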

\begin{proof}
Comparing the definitions of finite field hypergeometric functions given by Greene and McCarthy, one can find
\begin{align*}
  {}_5F_4&\(\begin{matrix}
{\eta_4}, &{\eta_4},&{\eta_4},&{\eta_4}, &\eta_8  \\
   & \varepsilon,  & \varepsilon, & \varepsilon,&\eta_8\\
\end{matrix};1\)_q\\
&=\frac1{q^3}{}_5F_4\(\begin{matrix}
{\eta_4}, &{\eta_4},&{\eta_4},&{\eta_4}, &\eta_8  \\
   & \varepsilon,  & \varepsilon, & \varepsilon,&\eta_8\\
\end{matrix};1\)_q^\ast + \(1-\frac 1q\){}_4F_3\(\begin{matrix}
{\eta_4}, &{\eta_4},&{\eta_4},&{\eta_4}  \\
   & \varepsilon,  & \varepsilon, & \varepsilon\\
\end{matrix};1\)_q.
\end{align*}
On the other hand, by \cite[Theorem 3.15(ii)]{Greene}, the Greene's ${}_5F_4$ function is equal to
$$
   -\frac1q{}_4F_3\(\begin{matrix}
{\eta_4}, &{\eta_4},&{\eta_4},&{\eta_4}  \\
   & \varepsilon,  & \varepsilon, & \varepsilon\\
\end{matrix};1\)_q+\frac{J(\et8,\et8)^4}{q^4}.
$$
These lead to the desired result.
\end{proof} 
We now evaluate the $_5F_4$ modified Gaussian hypergeometric function using Theorem \ref{McCarthy5F4} which is due to McCarthy.
\begin{prop}\label{prop: 5F4}
Let $q=p^e\equiv 1 \pmod{8}$ be a prime power. Then
$$
 {}_5F_4\(\begin{matrix}
{\eta_4}, &{\eta_4},&{\eta_4},&{\eta_4}, &\eta_8  \\
   & \varepsilon,  & \varepsilon, & \varepsilon,&\eta_8\\
\end{matrix};1\)_q^\ast= \frac{J(\et8,\et8)^4}{q}-qJ(\et4,\et2)-J(\et2,\et4)^3+J(\et2,\bet4)^2.
$$
\end{prop}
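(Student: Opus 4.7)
The plan is to apply Theorem \ref{McCarthy5F4} to the target $_5F_4^*$, with the specialization
$$
A=B=C=D=\eta_4,\qquad E=\eta_8.
$$
Under this choice the bottom row $(A\overline{B},A\overline{C},A\overline{D},A\overline{E})$ appearing in McCarthy's theorem becomes $(\varepsilon,\varepsilon,\varepsilon,\eta_8)$, matching our target exactly. All the non-degeneracy hypotheses are immediate since $\eta_8$ has order $8$: in particular $B=\eta_4\neq\varepsilon$, $B^2=\eta_2\neq\eta_4=A$, $CD=\eta_2\neq A$, $CE=DE=\eta_8^3\neq A$, and $CDE=\overline{\eta_8}^3\neq A$. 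Crucially, the assumption $q\equiv 1\pmod 8$ forces $A=\eta_4=\eta_8^2$ to be a square, so the non-vanishing branch of Theorem \ref{McCarthy5F4} applies and expresses our $_5F_4^*$ as a Gauss sum prefactor times $\sum_{R^2=\eta_4}{}_4F_3^*(\cdots)$, plus a second Gauss sum prefactor times ${}_2F_1^*(\eta_4,\eta_4;\varepsilon;-1)_q$.

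For the two square roots $R=\eta_8$ and $R=\overline{\eta_8}^3$ of $\eta_4$, a direct substitution shows that the resulting $_4F_3^*$'s are precisely the two functions evaluated in Lemma \ref{lem:23}, which one substitutes. In parallel the outer prefactor
$$
\frac{g(\overline{\eta_4})\,g(\eta_8)\,g(\eta_4)\,g(\eta_8)}{g(\varepsilon)\,g(\varepsilon)\,g(\overline{\eta_8})\,g(\eta_8^3)}
$$
is simplified using $g(\varepsilon)=-1$, $g(\chi)g(\overline\chi)=\chi(-1)q$, and $J(\chi,\psi)=g(\chi)g(\psi)/g(\chi\psi)$ (noting $\eta_4(-1)=1$ since $q\equiv 1\pmod 8$) into an explicit Jacobi-sum quotient that combines cleanly with the denominators $J(\eta_4,\overline{\eta_8}^3)$ and $J(\eta_4,\eta_8^3)$ of Lemma \ref{lem:23}. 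For the residual ${}_2F_1^*(\eta_4,\eta_4;\varepsilon;-1)_q$ term, apply Proposition \ref{Greene-McCarthy} to convert to Greene's normalization and then evaluate Greene's ${}_2F_1(\eta_4,\eta_4;\varepsilon;-1)_q$ directly from its binomial-coefficient definition, in the same spirit as the formula $p\cdot{}_2F_1(\eta_2,\eta_2;\varepsilon;-1)_p=J(\eta_4,\eta_2)+J(\overline{\eta_4},\eta_2)$ from Greene (4.11) used in Section \ref{Leg}; this produces an expression in Jacobi sums of $\eta_8$ and $\eta_2$.

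With all three contributions in Jacobi sum form, the final step is to consolidate them into the four target Jacobi sums $J(\eta_8,\eta_8)$, $J(\eta_4,\eta_2)$, $J(\eta_2,\eta_4)$, and $J(\eta_2,\overline{\eta_4})$. This uses the standard transformations $J(\chi,\psi)=J(\psi,\chi)$ and $J(\chi,\psi)=\chi(-1)J(\chi,\overline{\chi\psi})$, the conjugation relation $J(\overline{\eta_8},\overline{\eta_8})=\eta_8(-1)\overline{J(\eta_8,\eta_8)}$, and the Hasse--Davenport-type product that expresses $J(\eta_8,\eta_8)^2$ in terms of $J(\eta_4,\eta_2)$ up to Gauss sum factors. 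The main obstacle is exactly this bookkeeping: numerous $\eta_8(-1)$ signs arise (from Lemma \ref{lem:23} and from the Gauss sum reflection formula) along with many 8th-order Jacobi sums, and one must organize the computation so that the cubic-in-$J(\eta_8,\overline{\eta_4})$ contributions from the two Lemma \ref{lem:23} outputs (weighted by the outer prefactor), together with the ${}_2F_1^*$ contribution, combine to produce $J(\eta_8,\eta_8)^4/q$, while the remaining linear and quadratic Jacobi-sum pieces consolidate to exactly $-qJ(\eta_4,\eta_2)-J(\eta_2,\eta_4)^3+J(\eta_2,\overline{\eta_4})^2$.
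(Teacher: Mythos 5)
Your proposal follows essentially the same route as the paper: specialize Theorem \ref{McCarthy5F4} with $A=B=C=D=\eta_4$, $E=\eta_8$ (valid since $q\equiv 1\pmod 8$ makes $\eta_4$ a square), feed the two square roots $R=\eta_8,\overline{\eta_8}^3$ into Lemma \ref{lem:23}, evaluate the residual Kummer-type ${}_2F_1^\ast$ at $-1$, and consolidate via Jacobi sum identities (the paper uses $\eta_2(2)=\eta_4(-1)=1$ and relations such as $J(\eta_8,\overline{\eta_4})=\eta_8(-1)J(\eta_8,\eta_8)$). The only cosmetic difference is that the paper evaluates the ${}_2F_1^\ast$ term directly by McCarthy's Theorem 1.9 rather than by converting to Greene's normalization, which is an equivalent step.
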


\begin{proof}
According to Theorem \ref{McCarthy5F4}, we can deduce that
\begin{align*}
    {}_5F_4&\(\begin{matrix}
{\eta_4}, &{\eta_4},&{\eta_4},&{\eta_4}, &\eta_8  \\
   & \varepsilon,  & \varepsilon, & \varepsilon,&\eta_8\\
\end{matrix};1\)_q^\ast= \frac{q}{J(\bet8,\et4)}{}_2F_1\(\begin{matrix}
{\eta_4},&{\eta_4}   \\
        & \varepsilon\\
\end{matrix};-1\)_q^\ast\\
&+q\frac{J(\et8,\et8)}{J(\et8^3,\bet8)}\({}_4F_3\(\begin{matrix}
\overline{\eta_8} &{\eta_4}&{\eta_4} &\eta_8  \\
   & \eta_8,  & \eta_8^3, & \varepsilon\\
\end{matrix};1\)_q^\ast+{}_4F_3\(\begin{matrix}
{\eta_8^3}, &{\eta_4},&{\eta_4}, &\eta_8  \\
   & \overline{\eta_8}^3,  & \eta_8^3, & \varepsilon\\
\end{matrix};1\)_q^\ast\).
\end{align*}
We now apply Lemma \ref{lem:23} and the following fact which arises from Theorem 1.9 of \cite{McCarthy}
$$
  {}_2F_1\(\begin{matrix}
{\eta_4}&{\eta_4}   \\
        & \varepsilon\\
\end{matrix};1\)_q^\ast
=-J(\et8,\bet4)-J(\bet4,\bet8^3).
$$
To simplify the formulas, we recall the facts that if $q=p^e\equiv 1 \pmod 8$, we have $\et2(2)=\et4(-1)=1$
and the identities

\begin{align*}
  J(\et8,\et4)&=\bet8^3(-4)J(\et2,\et4),\\
  J(\bet4,\et8^3)&=\et8(-1)J(\bet8,\et8^3)=\bet8(-4)J(\et2,\bet4),\\
  J(\et8,\bet4)&=J(\et4,\et8^3)=\et8(-1)J(\et8^3,\et8^3)=\et8(-1)J(\et8,\et8).
\end{align*}

For more details on Jacobi sums, please see \cite[Chapter 3]{Berndt-Evans-Williams}.
\end{proof}

We now conclude with the second case using representation theory. We first describe the  Gr\"ossencharacters corresponding to the Jacobi sums 
\begin{multline*}
-J(\overline{\eta_4},\eta_2)^3-qJ(\overline{\eta_4},\eta_2)+J({\eta_4},\eta_2)^2= 
-J(\overline{\eta_4},\eta_2)^3-J(\overline{\eta_4},\eta_2)J(\overline{\eta_4},\eta_2)^2+J({\eta_4},\eta_2)^2
\end{multline*}
and then we use properties of induced representations to draw the final conclusion.
\begin{proof}[The proof of the case when prime powers $q\equiv 5 \pmod 8$.]

For each prime ideal  $\mathfrak p$ prime to $(4)$ in $\Z[\sqrt{-1}]$,  let $q$ be the norm of $\mathfrak p$. Then $q\equiv1 \mod \, 4$, and let $\psi_{\mathfrak p}$ be a homomorphism of $\Z[\sqrt{-1}]/\mathfrak p$ to the order 4 multiplicative group $\langle \sqrt{-1} \rangle$ such that $\psi_{\mathfrak p}(x)\equiv x^{\frac{q-1}4} \mod \mathfrak p$ for each $x\in \Z[\sqrt{-1}]$.  The map that assigns $\displaystyle
-\sum_{x \mod \mathfrak p}\psi_{\mathfrak p}(x)\psi_{\mathfrak p}^2(1-x)=-J(\psi_{\mathfrak p},\psi_{\mathfrak p}^2)
$ to  $\frak p$
extends to a Hecke (or Gr\"ossencharacter) character $\psi$ of $G_{\Q(\sqrt{-1})}$ (see \cite{Weil} by  Weil). In particular, $\psi$ is of conductor $\((1+\sqrt{-1})^4\)=(4)$ and infinity-type $[1,0]$, which is corresponding to the elliptic curve with complex multiplication which has conductor $64$. Explicitly, for any $a+b\sqrt{-1}\in \Z[\sqrt{-1}]$,  $\psi(a+b\sqrt{-1})=(-1)^b(a+b\sqrt{-1})\chi_1(a+b\sqrt{-1})$, where
$$
  \chi_1(a+b\sqrt{-1})=
  \begin{cases}
     (-1)^{\frac{a+b-1}2}\sqrt{-1},&\, \mbox{ if } a\equiv 0 \pmod{2}, b\equiv 1 \pmod{2},\\
    (-1)^{\frac{a+b-1}2},&\, \mbox{ if } a\equiv 1\pmod{2}, b\equiv 0 \pmod{2},\\
      0,&\, \mbox{ otherwise.}
  \end{cases}
$$
Here, we remark that the unit group of $\Bbb Z[\sqrt{-1}]/(4)$ has order $8$ and it is  generated by $\sqrt{-1}$, $-1+2\sqrt{-1}$. The Dirichlet character $\chi_1$ takes  values $\chi_1(\sqrt{-1})=\sqrt{-1}$, $\chi_1(-1+2\sqrt{-1})=1$.

By class field theory, $\psi$ corresponds to a character $\chi$ of $G_{\Q(\sqrt{-1})}$.
 Similar to the discussion in \cite{DFLST}, for each Frobenius class $\Frob_q\in G_{\Q(\sqrt{-1},\sqrt{2})}$ with $q\equiv 1\pmod 4$, 
\begin{multline*}
\sum_{x,y,z\in \F_q} \left
[-\eta_4\(x^3y^3z^3(1-x)(1-y)(1-z)(x-yz)\)- \overline{\eta_4}\(x^3y^3z^3(1-x)(1-y)(1-z)(x-yz)\)\right ]
\end{multline*}
coincides with the trace of $\Frob_{\frak p}$ under the 6-dimensional semisimple
representation $$\rho:=\text{Ind}_{G_{\Q(\sqrt{-1})}}^{G_\Q} \left
(\overline{\chi}^3\oplus (\overline{\chi}^2\otimes \chi)\oplus \chi^2
\right ).$$   Moreover,
$\rho|_{G_{\Q(\sqrt{-1})}}=\sigma\oplus \bar{\sigma}$, with the restriction $\sigma|_{
G_{\Q(\sqrt{-1},\sqrt{2})}}$ being isomorphic to the restriction of $\overline{\chi}^3\oplus
(\overline{\chi}^2\otimes \chi)\oplus \chi^2$ to $G_{\Q(\sqrt{-1},\sqrt{2})}$. As $
G_{\Q(\sqrt{-1},\sqrt{2})}$ is an index-2 subgroup of
$G_{\Q(\sqrt{-1})}$, by Clifford's result \cite{Clifford}, $\sigma$ is also direct sum
of the form
$$(\overline{\chi}^3 \otimes \varphi^{n_1})\oplus
(\overline{\chi}^2\otimes \chi  \otimes\varphi^{n_2})\oplus (\chi^2
\otimes\varphi^{n_3})$$ where $\varphi$ is
the order 2 character of $G_{\Q(\sqrt{-1})}$ with kernel $
G_{\Q(\sqrt{-1},\sqrt{2})}$, $n_1,n_2,n_3\in \{ 0,1\}$. From computing a few primes $p\equiv
5\pmod 8$, we determine that each $n_i=0$ and the claim  for
$p\equiv 5 \pmod 8$ thus follows.
\end{proof}

\section{Supercongruences}\label{Super}
We first prove Theorem \ref{thm:4a} using the technique outlined before Proposition \ref{prop:9}.
The initial idea of the proof is due to Zudilin, and uses the following particular case of the Karlsson--Minton formula \cite[eq.~(1.9.3)]{GR}: for any non-negative integers $m_1, \ldots, m_n$,
\begin{multline}\label{KM1}
\pFq{n+1}{n}{
-(m_1+\dots+m_n)& \, b_1+m_1&  \dots & b_n+m_n}{&
b_1 &\dots & b_n }{1}
\\
=(-1)^{m_1+\dots+m_n}\frac{(m_1+\dots+m_n)!}{(b_1)_{m_1}\dotsb(b_n)_{m_n}}.
\end{multline}
Note that when $n=2$, we can derive a different proof using a formula of Dixon. We present the proof below as it applies to all $n$.\\

\noindent{\it Proof of Theorem \ref{thm:4a}.}\\
Let $n\geq 3$, and $p\equiv 1 \pmod n$ be prime, which has to be odd.  Set $m=\frac{p-1}n$, and let $y$ be any integer. Letting $b_1=1+yp$, $b_2=\cdots =b_{n}=1$, and $m_1=\cdots =m_{n}=m$ in \eqref{KM1}, we get that
\begin{multline}\label{KM2}
\pFq{n+1}{n}{1-p&1+m+yp&1+m&\cdots& 1+m}{& 1+yp&1&\cdots&1}{1} \\ = \frac{(-1)^{p-1}(p-1)!}{(1+yp)_m(m!)^{n-1}}=\frac{(p-1)!}{(1+yp)_m(m!)^{n-1}}.
\end{multline}
Now we compare the left hand side with the left hand side of Theorem \ref{thm:4a}. We observe that
\begin{align*}
(1-p)_k & \equiv (1)_k -p\sum_{i=1}^k\frac{(1)_k}{i} \pmod{p^2}, \\
(1+yp)_k & \equiv (1)_k +yp\sum_{i=1}^k\frac{(1)_k}{i} \pmod{p^2},
\end{align*}
and so
\begin{equation}\label{(1+y)1}
\frac{(1-p)_k}{(1+yp)_k} \equiv 1 - (1+y)\left[ \sum_{i=1}^k i^{-1} \right]p \pmod{p^2}.
\end{equation}
Also,
\begin{multline*}
(1+m+yp)_k = \left(\left(1-\frac 1n\right) + \left(\frac 1n + y\right)p\right)_k \equiv \\
\left(1-\frac 1n\right)_k + \left(\frac 1n +y\right) \left[\sum_{i=1}^k\frac{(1-\frac 1n)_k}{(i-\frac 1n)}\right]p \pmod{p^2},
\end{multline*}
\begin{multline*}
(1+m)_k^{n-1}  \equiv \left(1-\frac 1n\right)_k^{n-1} + \left(\frac{n-1}n\right) \left(1-\frac 1n \right)_k^{n-2}\left[ \sum_{i=1}^k \frac{(1-\frac1n)_k}{(i-\frac 1n)} \right]p \pmod{p^2},
\end{multline*}
and so
\begin{multline}\label{(1+y)2}
(1+m+yp)_k(1+m)_k^{n-1} \equiv 
\left( 1-\frac 1n\right)_k^n\left(1 + (1+y) \left[\sum_{i=1}^k \left(i-\frac 1n\right)^{-1} \right] p\right) \pmod{p^2}.
\end{multline}
Thus \eqref{(1+y)1} and \eqref{(1+y)2} give that the left hand side of \eqref{KM2} can be written modulo $p^2$ as
\[
\sum_{k=0}^{p-1} \frac{\left(1-\frac 1n \right)_k^n}{(1)_k^{n-1}}\left( 1 + (1+y) \left[ \sum_{i=1}^k \left( \left(i - \frac 1n\right)^{-1} - i^{-1}\right)  \right] p \right) \pmod{p^2}.
\]

Using harmonic sums we conclude that there exists $A\in \Z_p$ such that
\begin{multline}\label{eq:6.2a} \pFq{n+1}{n}{1-p&1+m+yp& 1+m&\cdots & 1+m}{& 1+yp&1&\cdots&1}{1}\\ \equiv  \pFq{n}{n-1}{\frac{n-1}n&\frac{n-1}n&\cdots&\frac{n-1}n}{&1&\cdots&1}{1}_{p-1} \cdot\(1-A(y+1)p\) \pmod {p^2}.
\end{multline}
Using part c) of Proposition \ref{GpFacts}, we see that $a_0(\frac 1n)=\frac{1+(n-1)p}n=p-m$, and by part b),
$$\G_p(-m)=1/m! \; \mbox{ and } \; \G_p(-p)=-1/(p-1)!.$$
Also
\begin{multline*} 
\frac{1}{(1+yp)_m}= \frac{(-1)^m \G_p(1+yp)}{\G_p(1 + yp +\frac{p-1}{n})} = \frac{-(-1)^m \G_p(yp)}{\G_p(1 - \frac{1}{n} + (\frac{1}{n} + yp ))}= 
\G_p\left(\frac{1}{n} - \left(\frac{1}{n} + y\right)p\right)\G_p(yp).
\end{multline*}

We thus have
\begin{multline}\label{eq:6.3a}
\frac{(p-1)!}{(1+yp)_m(m!)^{n-1}} = -\frac{\G_p(\frac{1-p}n)^{n-1}\G_p(\frac 1n - (\frac 1n + y)p)\G_p(yp)}{\G_p(-p)}\\
\equiv- \G_p\(\frac 1n\)^n\(1+\(G_1(0)-G_1\(\frac 1n\)\)(1+y)p\) \pmod{p^2}.
\end{multline}
Finally, letting $y=-1$ in \eqref{eq:6.2a} and \eqref{eq:6.3a}, we see the desired congruence modulo $p^2$.\\

\noindent We now prove Theorem \ref{H4}.\\

\noindent \emph{Proof of Theorem \ref{H4}.}
Let $p$ be a prime such that $p \equiv 1\pmod{4}$.  We will use the following formula of Dougall (see Theorem 3.5.1 of \cite{AAR}) which says that if $2a+1=b+c+d+e-m$, then
\begin{multline}\label{Dougall}
\pFq{7}{6}{a& a/2 + 1& b&c&d&e&-m}{&a/2& 1+a-b& 1+a-c& 1+a-d& 1+a-e& 1+a+m}{1} \\
= \frac{(1+a)_m(1+a-b-c)_m(1+a-b-d)_m(1+a-c-d)_m}{(1+a-b)_m(1+a-c)_m(1+a-d)_m(1+a-b-c-d)_m}.
\end{multline}
Letting $a=1/4,b=5/8$, $c=1/8$, $d=(1+pu)/4$, $e=(1+(1-u)p)/4$, and $m=(p-1)/4)$, we have
$2a+1=b+c+d+e-m$ and thus we can use \eqref{Dougall}.  We first observe that the left hand side reduces to
\[
\pFq{4}{3}{\frac14 & \frac{1+pu}4 & \frac{1+(1-u)p}4 & \frac{1-p}4}{ &1-\frac{pu}4 &1+\frac{(u-1)p}4 &1+\frac p4}{1}
\]
after deleting three matching pairs of upper and lower parameters corresponding to $a/2 +1$, $b$, and $c$.
Writing the right hand side in terms of Gamma functions, we thus get that
\begin{multline*}
\pFq{4}{3}{\frac14 & \frac{1+pu}4 & \frac{1+(1-u)p}4 & \frac{1-p}4}{ &1-\frac{pu}4 &1+\frac{(u-1)p}4 &1+\frac p4}{1} \\
=\frac{\G(1+\frac p4)\G(\frac{1+p}4)\G(\frac 18+\frac{(1-u)p}{4})\G(\frac 58+\frac{(1-u)p}{4})\G(\frac 58)\G(\frac 98)\G(1-\frac{pu}4)\G(\frac{1-pu}4) }
{\G(\frac 54)\G(\frac12)\G(\frac38-\frac{pu}4)\G(\frac78-\frac{pu}4)\G(\frac38+\frac{p}4)\G(\frac78+\frac{p}4)\G(\frac{3+(1-u)p}4) \G(\frac{(1-u)p}4)}.
\end{multline*}
Using the duplication formula $\G(z)\G(z+\frac 12)=2^{1-2z}\G(\frac
12)\G(2z)$, we have
\[
\frac{\G(\frac 18+\frac{(1-u)p}{4})\G(\frac 58+\frac{(1-u)p}{4})\G(\frac 58)\G(\frac 98)}{\G(\frac38-\frac{pu}4)\G(\frac78-\frac{pu}4)\G(\frac38+\frac{p}4)\G(\frac78+\frac{p}4)}
=\frac{\G(\frac14+\frac{(1-u)p}{2})\G(\frac54)}{\G(\frac34-\frac{pu}2)\G(\frac34+\frac{p}2)}.
\]
Using Proposition \ref{GpFacts}, we can thus rewrite the right hand side as
\begin{multline*}
\frac{\G_p(\frac p4)\G_p(\frac{1+p}4)\G_p(\frac14+\frac{(1-u)p}{2})\G_p(-\frac{pu}4)\G_p(\frac{1-pu}4)}{\G_p(\frac12)\G_p(\frac34-\frac{pu}2)\G_p(\frac34+\frac{p}2)\G_p(\frac{3+(1-u)p}4)\G_p(\frac{(1-u)p}4)}\\
\equiv (-1)^{\frac{p-1}4}\G_p\left(\frac12\right)\G_p\left(\frac14\right)^6
\cdot \left(1-\frac{5(u^2-u+1)(G_1(\frac 14)^2-G_2(\frac 14))}{16}p^2 \right.\\
\left. +\frac{u(u-1)(G_1(0)^3-G_3(0)-21G_2(\frac 14)G_1(\frac 14)+7G_3(\frac 14)+14G_1(\frac 14)^3)}{128}p^3\right) \pmod{p^4}.
\end{multline*}
Meanwhile, we expand the left hand side using harmonic sums as in Proposition \ref{prop:9}.
So there exist $a_{k,i},b_{k,i}\in \Z_p$ such that modulo $p^4$ we have
\begin{multline*}
\pFq{4}{3}{\frac 14&\frac{1+pu}4&\frac{1+(1-u)p}4&\frac{1-p}4}{&1-\frac{pu}4&1+\frac{(u-1)p}4&1+\frac
p 4}{1}= \\
\sum_{k=0}^{\frac{p-1}4} \left( \frac{(\frac 14)_k^4}{k!^4} 
 \frac{ (1+a_{k,1} \frac{pu}4+a_{k,2}(\frac{pu}4)^2+a_{k,3}(\frac{pu}4)^3)(1+a_{k,1} \frac{(1-u)p}4+a_{k,2}(\frac{(1-u)p}4)^2+a_{k,3}(\frac{(1-u)p}4)^3)}{(1+b_{k,1}\frac{-pu}4+b_{k,2}(\frac{-pu}4)^2+b_{k,3}(\frac{-pu}4)^3)} \right. \\
\left. \cdot \frac{(1+a_{k,1}\frac{-p}4+a_{k,2}(\frac{-p}4)^2+a_{k,3}(\frac{-p}4)^3)}
{(1+b_{k,1} \frac{(u-1)p}4+b_{k,2}(\frac{(u-1)p}4)^2+b_{k,3}(\frac{(u-1)p}4)^3)(1+b_{k,1}\frac{p}4+b_{k,2}(\frac{p}4)^2+b_{k,3}(\frac{p}4)^3)} \right).
\end{multline*}
Note that if we collect coefficients of $p,p^2,p^3$ we get $0$,
$$
-\frac{(u^2-u+1)(a_{k,1}^2-2b_{k,2}-2a_{k,2}+b_{k,1}^2)}{16},
$$
and
$$
-\frac{u(u-1)(-3b_{k,3}+3a_{k,3}-b_{k,1}^3-3a_{k,1}a_{k,2}+3b_{k,1}b_{k,2}+a_{k,1}^3)}{64},
$$
respectively. \footnote{Comparing both sides, when we pick
$u=-\zeta_3$ where $\zeta_3$ be a primitive cubic root, we can derive
that the claim of the theorem holds modulo $p^3$.}

By collecting terms, we see there are $A_i,B_i\in \Z_p$ such that for all $u\in \Z_p$
\begin{multline*}
\pFq{4}{3}{\frac 14&\frac 14&\frac 14&\frac 14}{&1 &1 &1}{1}_{\frac{p-1}{4}}[1+A_2(u^2-u+1)p^2+A_3(u^2-1)p^3] \\
\equiv (-1)^{\frac{p-1}{4}}\G_p \left(\frac12\right) \G_p \left(\frac 14\right)^6 [1+B_2 (u^2-u+1) p^2 + B_3 (u^2-1) p^3] \pmod{p^4},
\end{multline*}
from which the claim of the theorem follows.

\section{Remarks}\label{Rem}
\subsection{Truncated hypergeometric series and noncongruence modular forms}
We first recall the following hypergeometric series transformation (see  \cite[(2.4.12)]{AAR})
\begin{equation}
 \pFq{3}{2}{-m,a,b}{,d,e}{1}=\frac{(e-a)_m}{(e)_m}\pFq{3}{2}{-m,a,d-b}{,d,a+1-m-e,}{1},
\end{equation}
which holds when $-m$ is a negative integer and both sides converge. Given an integer $n\ge 2$ and a prime $p\equiv 1\pmod n$, letting first $m=\frac{p-1}n$, $a=\frac{n-1+p}n$, $b=\frac 1n$, $d=e=1$, and then $m=\frac{(n-1)(p-1)}3$, $a=\frac{1+(n-1)p}n,$ $b=\frac 1n$, $d=e=1$ respectively, we derive the following supercongruence
\begin{equation}\label{eq:7.2}(-1)^{\frac{p-1}n}\pFq{3}{2}{\frac{n-1}n,\frac{n-1}n,\frac1n}{,1,1}{1}_{p-1}\equiv
\pFq{3}{2}{\frac1n,\frac1n,\frac{n-1}n}{,1,1}{1}_{p-1}\,
\pmod {p^2}.\end{equation} This was first observed and proved by McCarthy in a private communication via a different approach using the work of Mortenson \cite{Mortenson} for the case of $n=3$. Moreover, we would like to mention the following conjecture to demonstrate that truncated  hypergeometric series arise in many different settings including the theory of noncongruence modular forms.

\begin{conj}For any integer $n>1$ and prime $p\equiv 1 \pmod n$,
$$\pFq{3}{2}{\frac1n,\frac1n,\frac{n-1}n}{,1,1}{1}_{p-1}\equiv a_p(f_n(z)) \pmod {p^2},$$
where $a_p(f_n(z))$ is the $p$th coefficient of $f_n(z)=\sqrt[n]{E_1(z)^{n-1}E_2(z)}$ when expanded in terms of the local uniformizer $e^{2\pi i z/5n}$, and $E_1(z)$ and $E_2(z)$ are two explicit level 5 weight 3 Eisenstein series with coefficients in $\Z$ (see (17) and (18) of \cite{LLZ} or \cite[\S 3]{LL} for their expansions).
\end{conj}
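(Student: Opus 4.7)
The plan is to translate the conjecture into an equality between the $p$-adic trace of Frobenius on a $2$-dimensional Galois representation and the Fourier coefficient $a_p(f_n(z))$, then invoke an Atkin--Swinnerton-Dyer (ASD)-type supercongruence to promote the Scholl representation attached to $f_n$ to a mod $p^2$ statement. The bridge between the hypergeometric and modular sides will be built in three layers: a mod $p$ identification of the truncated hypergeometric series with a Gaussian hypergeometric function, a geometric interpretation of that Gaussian hypergeometric function via point counts, and an identification of the resulting motive with the Scholl motive of $f_n(z)$.

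\textbf{Step 1: Hypergeometric side.} Apply Lemma \ref{GaussToHyper} (combined with the symmetry \eqref{eq:7.2}, so that the parameters $\{\tfrac{1}{n},\tfrac{1}{n},\tfrac{n-1}{n}\}$ can be interchanged with $\{\tfrac{n-1}{n},\tfrac{n-1}{n},\tfrac{1}{n}\}$) to rewrite
\[
\pFq{3}{2}{\tfrac{1}{n},\tfrac{1}{n},\tfrac{n-1}{n}}{,1,1}{1}_{p-1}
\]
modulo $p$ in terms of a Gaussian ${}_3F_2$ at $1$ whose trio of upper characters involves two copies of $\eta_n$ and one copy of $\bar\eta_n$. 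This is the analogue of part (1) of Theorem \ref{GaussianTheorem} but with one twisted character, and is expected to evaluate to a simple Jacobi sum expression that is the trace of Frobenius at $p$ on a two-dimensional piece of the cohomology of an appropriate hypergeometric variety (a mild variant of $C_{n,1}$ obtained by permuting the role of one of the parameters). The existence of this evaluation should follow by mimicking the proof of Theorem \ref{GaussianTheorem}(1) using Greene's Theorem 4.35 and the transformations in Proposition \ref{Greene-McCarthy}; for general $n$ one may need McCarthy's normalized version together with the Gross--Koblitz formula \eqref{eq:G-K} to package everything as a product of $\Gamma_p$-values.

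\textbf{Step 2: Modular side and ASD.} The form $f_n(z)=\sqrt[n]{E_1(z)^{n-1}E_2(z)}$ is a weight $3$ noncongruence cusp form on a finite-index subgroup $\G_n\subset\SL$ of index $n$ over $\Gamma_0(5)$. Scholl's construction associates to it a compatible family of $2$-dimensional $\ell$-adic representations of $G_\Q$ for which the ASD congruence for weight $k=3$ yields
\[
a_p(f_n(z))\equiv \Tr(\Frob_p)\pmod{p^{2}}
\]
for ordinary primes. The plan is to match this Scholl representation with the one produced in Step 1 by comparing characters: both are conjecturally induced from the same Hecke character of $\Q(e^{2\pi i/n})$ (compare the discussion for $n=3,4$ in Section \ref{Gauss}), and the conductor/infinity-type calculation made explicit in the paragraph preceding the proof of the $q\equiv 5\!\pmod 8$ case of Theorem \ref{GaussianTheorem} should be reused verbatim after replacing $\sqrt{-1}$ by $e^{2\pi i/n}$ and recomputing the Dirichlet character governing the twist.

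\textbf{Step 3: Upgrade to $p^2$.} A clean mod $p$ identification already follows from Step 1. To achieve mod $p^2$ without appealing to the ASD congruence as a black box, the plan is to apply the deformation technique used in Proposition \ref{prop:9} and Theorem \ref{thm:4a}: pick a one- or two-parameter family of terminating ${}_{r+1}F_{r}(1)$ series passing through the truncated series of interest via an appropriate Karlsson--Minton, Whipple, or Dougall evaluation; expand the Pochhammer quotients using harmonic sums; convert the closed form to a product of $p$-adic Gamma values via Proposition \ref{GpFacts}; and match coefficients of $p$ against the Scholl representation side. The required input on the modular side is the Taylor expansion of $f_n(z)$ in the local uniformizer $e^{2\pi i z/5n}$, which is explicit from \cite{LLZ}.

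\textbf{Main obstacle.} The genuinely hard step is Step 2: namely, producing an \emph{explicit} identification of the Scholl representation attached to the noncongruence form $f_n$ with the Galois representation cut out of $H^\bullet(C_{n,1})$ by the appropriate character projectors. For $n=3,4$ this should be feasible along the lines of Theorem \ref{GaussianTheorem} and its CM interpretation, but for general $n$ one must check compatibility of local factors at the bad primes ($p=5$, and primes ramified in $\Q(e^{2\pi i/n})$), as well as verify that no extra finite-order twist intervenes. Once that identification is in place, Step 3 is essentially a bookkeeping exercise in $p$-adic analysis of the same flavor as the proof of Theorem \ref{H4}.
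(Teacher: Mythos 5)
This statement is presented in the paper as a conjecture supported only by numerical data; the paper contains no proof of it, and its own references (\cite{LLZ, ALL, Long}) establish the modularity interpretation of $a_p(f_n)$ only for $n=3,4,6$. So there is no paper argument to compare against, and the question is whether your proposal actually closes the gap. It does not: each of your three steps rests on an assertion that is itself the open content of the conjecture. In Step 1 the evaluation of the mixed-character Gaussian ${}_3F_2$ as a trace of Frobenius on a two-dimensional motive is only "expected"; Lemma \ref{GaussToHyper} gives you the truncated series modulo $p$ only, so even a complete Step 1 would not by itself reach the asserted modulus $p^2$. In Step 3 the deformation method of Proposition \ref{prop:9} and Theorem \ref{thm:4a} requires an explicit terminating evaluation (Karlsson--Minton, Whipple, Dougall) passing through the series with parameters $\{\tfrac1n,\tfrac1n,\tfrac{n-1}n\}$, and no such identity is exhibited; without it the harmonic-sum bookkeeping has nothing to compare against.

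The most serious gap is Step 2. Scholl's construction attaches to the space of weight-$3$ cusp forms containing $f_n$ an $\ell$-adic representation whose dimension grows with $n$, and the clean two-term congruence $a_p(f_n)\equiv \Tr(\Frob_p)\pmod{p^2}$ is not a formal consequence of that construction: in general one only gets a longer Atkin--Swinnerton-Dyer-type relation, and extracting a two-term congruence requires knowing that the relevant two-dimensional subquotient is defined over $\Q$ and matches $f_n$ itself. Moreover, your plan to "reuse verbatim" the Gr\"ossencharacter argument from the $q\equiv 5\pmod 8$ case of Theorem \ref{GaussianTheorem} after replacing $\sqrt{-1}$ by $e^{2\pi i/n}$ cannot work for general $n$: that argument depends on $\Q(\sqrt{-1})$ being imaginary quadratic so that inducing a Hecke character yields a two-dimensional representation, whereas $\Q(e^{2\pi i/n})$ has degree $\varphi(n)>2$ for $n\ge 5$ and the resulting induced representation has the wrong dimension. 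Identifying the Scholl representation of the noncongruence form $f_n$ with an explicit piece of the cohomology of a hypergeometric variety for general $n$ is precisely what is not known, which is why the paper states this as a conjecture rather than a theorem.
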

In a series of papers \cite{LLZ, ALL, Long}, the third author and her collaborators studied the properties of these functions $f_n(z)$ which are weight 3 cusp forms for some finite index subgroups of $SL_2(\Z)$ that contain no principal congruence subgroups. For $n=3,4,6$,  the $p$th coefficients of $f_n(z)$  are shown to be related to the coefficients of classical Hecke or Hilbert modular forms.

In terms of Gaussian hypergeometric functions, we have
when $p\equiv 1\pmod n$ and $\eta_n$ any order $n$ character of $\F_p^\times$ then by work of Greene \cite{Greene}
\begin{equation}\hgthree{\eta_n}{\eta_n}{\overline {\eta_n}}{\varepsilon}{\varepsilon}{1}_p=\eta_n(-1) \hgthree{\eta_n}{\overline{\eta_n}}{\overline {\eta_n}}{\varepsilon}{\varepsilon}{1}_p.
\end{equation}

\subsection{Other observations}
 We conclude with some patterns observed from numerical data.  For each prime $p\equiv 1 \pmod 5$, it appears that
\begin{equation}\label{5F4}
\pFq{5}{4}{\frac25,\frac25,\frac25,\frac25,\frac25}{,1,1,1,1}{1}_{p-1}\overset{?}{\equiv} -\G_p\left(\frac 15\right)^5\G_p\left(\frac 25\right)^5 \pmod {p^5}.
\end{equation}Using the strategy of the proof of Theorem 5 and Dougall's  formula \eqref{Dougall}, one can obtain \eqref{5F4} modulo $p^4$. By the Gross-Koblitz formula, the $p$-adic Gamma value  agrees with $J(\eta_n^2,\eta_n^2)^3J(\eta_n,\eta_n)$ when we choose the right order $n$ character. Meanwhile, by Conjecture \ref{thm:4}, we sense the presence of another Jacobi sum factor $-J(\eta_n,\eta_n)J(\eta_n,\eta_n^2)J(\eta_n,\eta_n^3)$.  It will be interesting to know whether we can reconstruct the local zeta function of $C_{5,1}$ as we have done for $C_{3,1}$ and $C_{4,1}$. We leave this task to interested readers.

We conclude with a few more observations.  Motivated by Lemma \ref{GaussToHyper}, we numerically  observed the following supercongruences. Each corresponds to a supercongruence mentioned earlier.

\begin{enumerate}
\item For any prime $p\equiv1 \pmod 3$,
\[
\sum_{k=0}^{p-1}  \(p\frac{k!}{(\frac53)_k} \)^3\equiv\sum_{k=\frac{2(p-1)}3}^{p-1}  \(p\frac{k!}{(\frac53)_k} \)^3\overset{?}{\equiv}
\G_p\left(\frac 13\right)^6 \pmod {p^3}.
\]

\item For any prime $p\equiv1 \pmod 4$,
\[
\sum_{k=0}^{p-1}  \(p\frac{k!}{(\frac74)_k} \)^4\equiv\sum_{k=\frac{3(p-1)}4}^{p-1}  \(p\frac{k!}{(\frac74)_k} \)^4\overset{?}{\equiv}
(-1)^{\frac{p-1}4}\G_p\left(\frac 12\right)\G_p\left(\frac 14\right)^6 \pmod {p^4}.
\]

\item For any prime $p\equiv1 \pmod 5$,

\[\sum_{k=0}^{p-1}  \(p\frac{k!}{(\frac85)_k} \)^5\equiv\sum_{k=\frac{3(p-1)}5}^{p-1}  \(p\frac{k!}{(\frac85)_k} \)^5\overset{?}{\equiv} -\G_p\left(\frac 15\right)^5\G_p\left(\frac 25\right)^5 \pmod {p^5}.\]

\item For an integer $n>2$, and any prime $p\equiv 1\pmod n$,

\[\sum_{k=0}^{p-1}  \(p\frac{k!}{(\frac1n+1)_k} \)^n\equiv\sum_{k=\frac{(p-1)}n}^{p-1}  \(p\frac{k!}{(\frac1n+1)_k} \)^n\overset{?}{\equiv}
-\G_p\left(\frac 1n\right)^n \pmod {p^3}.\]
\end{enumerate}

\section{Acknowledgements}
We warmly thank the Banff International Research Station (BIRS) and Women in Numbers 3 BIRS 2014 (14w5009)   workshop for the opportunity to initiate this collaboration.  Thanks to the National Center for Theoretical Sciences in Taiwan for supporting the travel of Fang-Ting Tu to visit Ling Long.  Long was supported by NSF DMS1303292.  We are indebted to Wadim Zudilin for his  insightful suggestions and sharing his ideas. We also thank Jerome W. Hoffman for his interest and valuable comments, Jes\'us Guillera and Ravi Ramakrishna for helpful discussions. Further thanks to the referees for their careful readings and helpful comments. We used \texttt{Magma} and \texttt{Sage} for our computations related to this project and \texttt{Sage Math Cloud} to collaborate.

\bibliographystyle{plain}
\bibliography{ref-bib}

\end{document}